\theoremstyle{thmstyleone}%
\newtheorem{theorem}{Theorem}
\newtheorem{proposition}[theorem]{Proposition}%
\theoremstyle{thmstyletwo}%
\newtheorem{remark}{Remark}%
\theoremstyle{thmstylethree}%
\newtheorem{definition}{Definition}%
\newtheorem{assumption}{Assumption}%
\newtheorem{corollary}{Corollary}
\newtheorem{lemma}{Lemma}
\newtheorem{alemma}{Lemma}[section]
\begin{document}

\title[Article Title]{Convergence Analysis of an Inexact MBA Method for Constrained DC Problems}







\author[1]{\fnm{Ruyu} \sur{Liu}}\email{maruyuliu@mail.scut.edu.cn}

\author[1]{\fnm{Shaohua} \sur{Pan}}\email{shhpan@scut.edu.cn}

\author*[1]{\fnm{Shujun} \sur{Bi}}\email{bishj@scut.edu.cn}

\affil*[1]{\orgdiv{School of Mathematics}, \orgname{South China University of Technology}, \orgaddress{\city{Guangzhou},  \country{China}}}


\abstract{This paper concerns a class of constrained difference-of-convex (DC) optimization problems in which, the constraint functions are continuously differentiable and their gradients are strictly continuous. For such nonconvex and nonsmooth optimization problems, we develop an inexact moving balls approximation (MBA) method by a workable inexactness criterion for the solution of subproblems. This criterion is proposed by leveraging a global error bound for the strongly convex program associated with parametric optimization problems. We establish the full convergence of the iterate sequence under the Kurdyka-{\L}ojasiewicz (KL) property of the constructed potential function, achieve the local convergence rate of the iterate and objective value sequences under the KL property of the potential function with exponent $q\in[1/2,1)$, and provide the iteration complexity of $O(1/\epsilon^2)$ to seek an $\epsilon$-KKT point. A verifiable condition is also presented to check whether the potential function has the KL property of exponent $q\in[1/2,1)$. To our knowledge, this is the first implementable inexact MBA method with a complete convergence certificate. Numerical comparison with DCA-MOSEK, a DC algorithm with subproblems solved by MOSEK, is conducted on $\ell_1\!-\!\ell_2$ regularized quadratically constrained optimization problems, which demonstrates the advantage of the inexact MBA in the quality of solutions and running time.}

\keywords{Constrained DC problems, inexact MBA methods, full convergence, convergence rate, iteration complexity}


\pacs[MSC Classification]{90C26, 90C55, 90C06, 90C46, 49J53}

\maketitle
\section{Introduction}\label{sec1}

Constrained optimization problems are a class of tough composite optimization problems because the composite involves the indicator function of a complicated closed set. They naturally arise when one attempts to seek a solution that minimizes an objective under some restrictions. On the other hand, nonconvex optimization problems abound with difference-of-convexity (DC) functions \cite{Tao1997dc,Le2005dc,Le2018dc}; for example, a $\mathcal{C}^{1,1}$ function (a differentiable function with strictly continuous\footnote{This paper follows the definitions of \cite{RW98}, and strict continuity means local Lipschitz continuity.} gradient) is locally DC. We are interested in developing an efficient algorithm for the constrained DC problem
\begin{equation}\label{prob}
\min_{x\in\mathbb{R}^n}F(x):=f(x)+\phi(x)-\psi(x)+\delta_{\mathbb{R}_{-}^m}(g(x)),
\end{equation}
where the functions $f,\phi,\psi$ and $g$ satisfy the basic conditions stated in Assumption \ref{ass1}, and $\delta_{\mathbb{R}_{-}^m}(\cdot)$ is the indicator function of the non-positive orthant $\mathbb{R}_{-}^m$. We always assume that problem \eqref{prob} has a nonempty set of strong stationary points by Definition \ref{sspoint-def}.
\begin{assumption}\label{ass1}
 {\bf(i)} $g\!:\mathbb{R}^n\to\mathbb{R}^m$ is a $\mathcal{C}^{1,1}$ mapping such that $\Gamma\!:=g^{-1}(\mathbb{R}_{-}^m)\ne\emptyset$; 
	
 \noindent
 {\bf(ii)} $f\!:\mathbb{R}^n\to\overline{\mathbb{R}}:=(-\infty,\infty]$ is a proper lsc function that is $\mathcal{C}^{1,1}$ on an open convex set $\mathcal{O}\supset\Gamma$;
	
 \noindent
 {\bf(iii)} $\phi:\mathbb{R}^n\to\mathbb{R}$ and $\psi\!:\mathbb{R}^n\to\mathbb{R}$ are convex functions;
 
 \noindent
 {\bf(iv)} $F$ is bounded from below on the set $\Gamma$, i.e., ${\rm val}^*:=\inf_{x\in\Gamma}F(x)>-\infty$. 
\end{assumption}

Model \eqref{prob} includes a variety of convex and nonconvex constrained optimization problems depending on the assumptions on $f$ and $g_1,\ldots,g_m$. It allows $\psi$ to be a finite weakly convex function, and also covers the case that $\phi$ is an extended real-valued convex function with the domain of the form $H^{-1}(\mathbb{R}_{-}^p)$ for a $\mathcal{C}^{1,1}$ mapping $H\!:\mathbb{R}^n\to\mathbb{R}^p$, since such $\phi$ can be decomposed to the sum of a finite convex function and the composite function $\delta_{\mathbb{R}_{-}^p}(H(\cdot))$. In addition, when replacing the nonsmooth convex functions in the constraints of \cite[problem (1.1)]{Boob24} with their Moreau envelopes, the problem considered there can be addressed as a special case of \eqref{prob}. Problem \eqref{prob} arises in many science and engineering fields. In typical applications, the function $f$ appears as a measure for data-fidelity, the mapping $g$ is used to model restrictions on the decision variable $x$, and the DC function $\phi-\psi$ is a regularizer to induce desirable structures; see \cite[Table 1]{Gong13} for examples of such regularizers. 

Many methods have been developed for constrained optimization problems; for example, the classical penalty method \cite{Fiacco90,NesterovNemirovskii94}, the popular augmented Lagrangian method \cite{Powell69,Roc76,Bertsekas82}, and the sequential quadratic programming (SQP) method \cite{Solodov09,Solodov10}, but they cannot be directly applied to solve \eqref{prob} due to the nonsmooth DC term in the objective function. One common approach to tackle nonconvex and nonsmooth optimization problems is to resort to the majorization-minimization (MM) procedure, which iteratively constructs and minimizes a surrogate function that majorizes $F$ locally; see \cite{Sun16,Lewis16,Bolte16,Drusvyatskiy21} for related models and discussions. Since problem \eqref{prob} is a special case of the DC composite program considered in \cite[Eq.\,(2)]{Le24}, it can be solved with the DC composite algorithm, a class of MM method developed in \cite{Le24} by linearizing $g$ and $\psi$. However, the generated iterate sequence was only proved to have a subsequential convergence under the extended Robinson's constraint qualification (RCQ). In what follows, we mainly review another class of MM procedure for constrained optimization problems, i.e., the MBA method, closely related to this work, and make comparisons among some significant works on this method in Table \ref{table_compare}.
\begin{table}[h]
\caption{Comparison of algorithms with full convergence and complexities}\label{table_compare}%
\setlength{\tabcolsep}{2pt}
\begin{tabular}{@{}llllllll@{}}
\midrule
 Algorithms &\quad\ \ $f$ &\ $(\phi,\psi)$ & constraints &\ \ CQ & inexact\ \ &  full convergence & complexity\footnotemark[1]\\
 \midrule
 MBA\cite{Auslender10}  & $L$-smooth &(No,\,No) & $L$-smooth & MFCQ & \ \ No & convexity & $O(\log(1/\epsilon))$\\
 MBA\cite{Bolte16}    & $L$-smooth &(No,\,No)   & $L$-smooth & MFCQ  & \ \ No & semialgebraicity & \quad\ --\\
 Multiprox\cite{Bolte19}  & $L$-smooth &(Yes,\,No)  & $L$-smooth & Slater & \ \ No & convexity & $O(1/\epsilon)$ \\
 ${\rm SCP_{ls}}$\cite{YuPongLv21} & $L$-smooth&(Yes,\,Yes\footnotemark[2]) & $L$-smooth & MFCQ   &\ \ No &\quad\ KL &\quad\ --\\
 ${\rm MBA}$\cite{Nabou24} & $L$-smooth&(Yes,\,No) & $L$-smooth & MFCQ   &\ \ No &\quad\ KL &$O(1/\epsilon^2)$\\
 LCPG\cite{Boob24} & $L$-smooth &(Yes,\,No) & composite\footnotemark[3] & MFCQ &\ \ Yes &\quad\ --  & $O(1/\epsilon^2)$ \\
 iMBAdc & $\quad\mathcal{C}^{1,1}$ &(Yes,\,Yes) & $\quad \ \mathcal{C}^{1,1}$ & BMP  &\ \  Yes & \quad\ KL & $O(1/\epsilon^2)$\\
 \botrule
 \end{tabular}
 \footnotetext[1]{The complexity in \cite{Auslender10} was achieved for the iterate sequence under the strong convexity of the objective function, the complexity in \cite{Bolte19}  was obtained for the objective values, and the others are all established for reaching an $\epsilon$-KKT point (see Definition \ref{aKKT}).}
\footnotetext[2]{$\psi$ is required to be $\mathcal{C}^{1,1}$ on the set of stationary points.} 
\footnotetext[3]{Every $g_i$ is the sum of an L-smooth function and a continuous convex function.}
\end{table}
\subsection{Related work}\label{sec1.1}

The MBA method was first proposed by Auslender et al. \cite{Auslender10} for solving problem \eqref{prob} without the DC term $\phi-\psi$ but with L-smooth $f$ and $g$ (i.e., differentiable functions on $\mathbb{R}^n$ with Lipschitz continuous gradients). They constructed a surrogate of $F$ with the second-order Taylor's expansions of $f$ and $g_i$ involving only the spectral norms of their Hessians. Since the feasible set is approximated by an intersection of $m$ closed balls, the resulting iterative scheme is called the moving balls approximation method. Different from the standard SQP method, it generates a sequence of feasible iterates from a feasible starting point. The authors of \cite{Auslender10} proved that, under the MFCQ, any cluster point of the iterate sequence is a stationary point, the whole sequence converges to a minimizer if $f,g_1,\ldots,g_m$ are also convex, and the convergence has a linear rate if in addition $f$ is strongly convex. Later, for semialgebraic $f$ and $g$, Bolte and Pauwels \cite{Bolte16} established the full convergence and convergence rate of the iterate sequence under the MFCQ, thereby enhancing the convergence results of \cite{Auslender10}.

For problem \eqref{prob} with $f\equiv 0\equiv\psi$ and only one convex constraint $g_1(x)\le 0$, Shefi and Teboulle \cite{Shefi16} proposed the method DUMBA by adopting the ball approximation of the smooth constraint, keeping the objective function intact and solving the resulting model by duality. Under the Slater's CQ, they achieved the full convergence of the iterate sequence and provided the global convergence rate of the objective values with $O(1/\varepsilon)$ evaluation for finding an $\epsilon$-optimal solution. For the convex composite problem 
\begin{equation}\label{composite}
\min_{x\in\mathbb{R}^n}\vartheta(H(x)),
\end{equation}
where $\vartheta\!:\mathbb{R}^m\to\overline{\mathbb{R}}$ is a proper convex lsc function and $H\!:\mathbb{R}^n\to\mathbb{R}^m$ is a L-smooth mapping with all components being convex, Bolte et al. \cite{Bolte19} proposed a multiproximal linearization method, a variant of the MBA method, and achieved the same convergence result and global rate of convergence as in \cite{Shefi16} under the same CQ. 

Recently, Yu et al. \cite{YuPongLv21} studied a line-search variant of the MBA method for solving problem \eqref{prob} with $L$-smooth $f$ and $g$. Under the MFCQ and the assumption that $\psi$ is $\mathcal{C}^{1,1}$ on an open set containing the stationary point set of \eqref{prob}, they established the full convergence of the iterate sequence when the constructed potential function has the KL property, and derived the local convergence rate when the potential function has the KL property of exponent $q\in[1/2,1)$. Nabou and Necoara \cite{Nabou24} developed a moving Taylor approximation (MTA) method for problem \eqref{prob} with $\psi\equiv 0$, which reduces to the MBA method when the involved $p$ and $q$ both take $1$. Under the MFCQ, they established the full convergence of the iterate sequence if the constructed proximal Lagrange potential function has the KL property, and the local convergence rate when the potential function has the KL property of exponent $q\in[1/2,1)$. In addition, under the MFCQ, they also derived the complexity of $O(1/\epsilon^2)$ for seeking an $\epsilon$-KKT point. 

It is worth pointing out that the above MBA methods and their variants all require the exact solutions of  subproblems, which is unrealistic especially in large-scale applications due to computation error and computation cost. To overcome the issue, for problem \eqref{prob} with $\psi\equiv 0$ but every $g_i$ being a sum of an L-smooth function and a continuous convex function, Boob et al. \cite{Boob24} developed a level constrained proximal gradient (LCPG) method to allow the inexact solutions of subproblems. Their method will become an inexact MBA method if the convex function in every $g_i$ is removed. They put more emphasis on the complexity analysis of LCPG under the MFCQ for stochastic and deterministic setting, though the subsequential convergence of the iterate sequence was also proved. In the deterministic case, their inexact LCPG method was proved to require $O(1/\epsilon)$ evaluations for finding an $(\epsilon,\delta)$ type-II KKT point, which is allowed to have the $\delta$-distance from a $\sqrt{\epsilon}$-KKT point. We notice that their inexactness criterion (see \cite[Definition 7]{Boob24}) involves the exact primal-dual optimal solutions of subproblems and the optimal values of subproblems. Now it is unclear whether such an inexactness criterion is workable or not in practical computation. Thus, there remains a need to develop an MBA method with genuinely practical inexactness criteria.  

On the other hand, the existing asymptotic convergence results of the MBA method and its variants are all obtained under the MFCQ for problem \eqref{prob} with L-smooth $f,g$ and $\psi\equiv 0$ or $\psi$ being $\mathcal{C}^{1,1}$ on the stationary point set. Notice that $\psi\equiv 0$ or $\psi$ being $\mathcal{C}^{1,1}$ on the stationary point set greatly restricts the applications of \eqref{prob} in nonconvex and nonsmooth optimization. In addition, for classical nonlinear programming problems, the MFCQ is relatively strong. Recently, it has witnessed that the weaker metric subregularity constraint qualification (MSCQ) was used to derive optimality conditions and study stability of constraint system \cite{Gfrerer17}. Then, it is natural to ask whether the convergence of MBA-type methods can be established under MSCQ.  
\subsection{Main contribution}\label{sec1.2}

In this work, we attempt to address the above-mentioned issues. Our main contribution is the development of a practical inexact MBA method with a complete theoretical certificate for solving problem \eqref{prob}. It includes the following several aspects.
\begin{itemize}
\item First, we formulate a local MBA model by adopting the line search strategy to seek a tight upper estimation for the Lipschitz modulus of $\nabla\!f$ and $\nabla g$ at the current iterate, and  propose an implementable inexactness criterion for the solving of subproblems by leveraging a global error bound, which is derived for the strongly convex programs associated with a parametric optimization problem arising from the approximation to \eqref{prob}. Our inexactness criterion, unlike the one proposed in \cite{Boob24}, does not involve the unknown dual solution of subproblems. Though the optimal values of subproblems likewise appear in the criterion, they can be replaced by their lower bounds that are products at hand if a dual method is employed to solve subproblems. 

\item Second, the convergence analysis of the proposed inexact MBA method is conducted under the MSCQ for the constraint systems of subproblems (see Assumption \ref{ass2}), the partial bounded multiplier property (BMP) on the set of cluster points (see Assumption \ref{ass4}), and the KL property of a potential function $\Phi_{\widetilde{c}}$. Under Assumptions \ref{ass2} and \ref{ass4} and the boundedness of the iterate sequence, we prove that the whole iterate sequence converges to a strong stationary point as long as $\Phi_{\widetilde{c}}$ has the KL property on the set of cluster points, and if $\Phi_{\widetilde{c}}$ has the KL property of exponent $q\in[1/2,1)$, the convergence has the linear rate for $q=1/2$ and sublinear rate for $q\in(1/2,1)$. As shown in \cite{Gfrerer17}, the partial BMP is implied by the MFCQ and the constant rank constraint qualification (CRCQ), so for the first time, the full convergence of the iterate sequence of the MBA method is achieved without MFCQ. We also derive a verifiable criterion to check whether $\Phi_{\widetilde{c}}$ has the KL property of exponent $q\in[1/2,1)$, which is shown to be weaker than the one obtained in \cite[Theorem 3.2]{LiPong18}.  

\item Third, the complexity of $O(1/\epsilon^2)$ is established for seeking an $\epsilon$-KKT point under the MFCQ and L-smoothness of $f$ and $g$. The complexity bound has the same order as the exact MBA method for problem \eqref{prob} with $\psi\equiv 0$ (see \cite[Theorem 4.3]{Nabou24}), and as the inexact LCPG for seeking an $(\epsilon^2,\delta)$ type-II KKT point (see \cite[Theorem 8]{Boob24}). Taking into account the distinction between the $(\epsilon^2,\delta)$ type-II KKT point and the $\epsilon$-KKT point, our inexact MBA method exhibits an improved complexity guarantee. 
\end{itemize}

We also implement the proposed inexact MBA method by solving the dual of its subproblems with the proximal gradient method with line-search (PGls), and compare the performance of iMBA-PGls with that of the commercial software MOSEK on $\ell_1$-norm regularized convex quadratically constrained quadratic programs (QCQPs), and with that of DCA-MOSEK, a DC algorithm with subproblems solved by MOSEK, on $\ell_1\!-\!\ell_2$ regularized QCQPs. The comparison results indicate that our iMBA-PGls can produce solutions of better quality within less time for large-scale problems.

This paper is organized as follows. Section \ref{sec2} includes some background knowledge and preliminary results used for the subsequent analysis. Section \ref{sec3} describes the iteration steps of our inexact MBA method and proves its well-definedness. Section \ref{sec4} establishes the full convergence and convergence rate results. Section \ref{sec5} provides the complexity analysis of the inexact MBA method for  finding an $\epsilon$-KKT point. Section \ref{sec6} presents the implementation of our inexact MBA and tests its performance on the convex $\ell_1$-norm regularized QCQPs, the $\ell_1\!-\!\ell_2$ regularized QCQPs, and $\ell_1\!-\!\ell_2$ regularized Student's $t$-regression with constraints. Finally, we conclude this paper. 
\subsection{Notation}\label{sec1.3}

Throughout this paper, $\mathcal{I}$ denotes the identity mapping from $\mathbb{R}^n$ to $\mathbb{R}^n$, $\|\cdot\|$ (resp. $\|\cdot\|_1$ and $\|\cdot\|_{\infty}$) represents the $\ell_2$-norm (resp. $\ell_1$-norm and $\ell_{\infty}$-norm), $\mathbb{B}$ stands for the unit ball centered at the origin, and $\mathbb{B}(x,\delta)$ denotes the closed ball centered at $x$ with radius $\delta>0$. For a linear mapping $\mathcal{Q}\!:\mathbb{R}^n\to\mathbb{R}^n$, $\mathcal{Q}^*$ represents its adjoint, $\mathcal{Q}\succeq 0$ means that $\mathcal{Q}$ is positive semidefinite, i.e., $\mathcal{Q}=\mathcal{Q}^*$ and $\langle x,\mathcal{Q}x\rangle\ge 0$ for all $x\in\mathbb{R}^n$, and  $\|\mathcal{Q}\|\!:=\max_{\|x\|=1}\|\mathcal{Q}x\|$ denotes its spectral norm. For an integer $k\ge 1$, write $[k]:=\{1,\ldots,k\}$. For a vector $x\in\mathbb{R}^n$, $x\in[a,b]$ for $a,b\in\mathbb{R}$ with $a\le b$ means $a\le x_i\le b$ for all $i\in[n]$, and $x_{+}$ denotes the vector whose $i$th component is $\max\{0,x_i\}$. For a closed set $C\subset\mathbb{R}^n$, $\delta_{C}(\cdot)$ stands for the indicator function of $C$, i.e., $\delta_{C}(x)=0$ if $x\in C$; otherwise $\delta_{C}(x)=\infty$, and ${\rm dist}(x,C)$ denotes the distance on the $\ell_2$-norm from $x$ to $C$. For a proper lsc function $h\!:\mathbb{R}^n\to\overline{\mathbb{R}}$, the notation $[a<h<b]$ with real numbers $a<b$ denotes the set $\{x\in\mathbb{R}^n\,|\,a<h(x)<b\}$, $h^*$ represents the conjugate function of $h$, and if $h$ is $\mathcal{C}^{1,1}$ at $\overline{x}$, ${\rm lip}\,\nabla h(\overline{x})$  denotes the Lipschitz modulus of its gradient $\nabla h$ at $\overline{x}$. For a mapping $H\!=(H_1,\ldots,H_m)\!:\mathbb{R}^n\to\mathbb{R}^m$, if $H$ is differentiable at $\overline{x}$, write $\nabla\! H(\overline{x}):=[H'(\overline{x})]^{\top}$ where $H'(\overline{x})$ is the Jacobian of $H$ at $\overline{x}$; and if $H$ is $\mathcal{C}^{1,1}$ at $\overline{x}\in\mathbb{R}^n$, write $\ell_{\nabla\!H}(\overline{x}):=({\rm lip}\,\nabla H_1(\overline{x}),\ldots,{\rm lip}\,\nabla H_m(\overline{x}))^{\top}$. 

\section{Preliminaries}\label{sec2} 
 
We first introduce some background knowledge on multifunctions related to this work, and more details are found in \cite{RW98,Mordu23}. Consider a multifunction  $\mathcal{F}\!:\mathbb{R}^n\rightrightarrows\mathbb{R}^m$. It is said to be outer semicontinuous (osc) at $\overline{x}\in\mathbb{R}^n$ if $\limsup_{x\to\overline{x}}\mathcal{F}(x)\subset\mathcal{F}(\overline{x})$ or equivalently $\limsup_{x\to\overline{x}}\mathcal{F}(x)=\mathcal{F}(\overline{x})$, and it is said to be locally bounded at $\overline{x}\in\mathbb{R}^n$ if for some neighborhood $V$ of $\overline{x}$, the set $\mathcal{F}(V):=\bigcup_{x\in V}\mathcal{F}(x)$ is bounded. Denote by ${\rm gph}\,\mathcal{F}$ the graph of the mapping $\mathcal{F}$, and by $\mathcal{F}^{-1}$ the inverse mapping of $\mathcal{F}$. 

Now we recall the basic subdifferential of an extended real-valued function.
\begin{definition}\label{def-subdiff} 
 (see \cite[Definition 8.3]{RW98}) Consider a function $h\!:\mathbb{R}^n\to\overline{\mathbb{R}}$ and a point $x\in{\rm dom}\,h$. The regular (or Fr\'echet) subdifferential of $h$ at $x$ is defined as
 \[
  \widehat{\partial}h(x):=\bigg\{v\in\mathbb{R}^n\ |\ \liminf_{x'\to x\atop x'\neq x}\frac{h(x')-h(x)-\langle v,x'-x\rangle}{\|x'-x\|}\ge 0\bigg\}, 
 \]
 and its basic (also known as limiting or Morduhovich) subdifferential at $x$ is defined as 
 \[
  \!\partial h(x)\!:=\!\Big\{v\in \mathbb{R}^n\ |\ \exists\, x^k\xrightarrow[h]{} x, v^k\!\in \widehat{\partial}h(x^k)\ {\rm with}\ v^k \to v \Big\}, 
 \]
 where $x^k\xrightarrow[h]{} x$ signifies $x^k\to x$ with $h(x^k)\to h(x)$.
\end{definition}
\begin{remark}\label{remark-subdiff}
 In view of Assumption \ref{ass1} (iii), the mappings $\partial(-\psi)\!:\mathbb{R}^n\rightrightarrows\mathbb{R}^n$ and $\partial\phi\!:\mathbb{R}^n\rightrightarrows\mathbb{R}^n$ are osc. Furthermore, according to \cite[Theorem 9.13 (d)]{RW98}, they are locally bounded.
\end{remark}

When $h$ is an indicator function of a closed set $C\subset\mathbb{R}^n$, its regular subdifferential at $x\in {\rm dom}\,h$ becomes the regular normal cone to $C$ at $x$, denoted by $\widehat{\mathcal{N}}_{C}(x)$, while its subdifferential at $x\in {\rm dom}\,h$ reduces to the (limiting) normal cone to $C$ at $x$, denoted by $\mathcal{N}_{C}(x)$. When $C$ is convex, $\widehat{\mathcal{N}}_{C}(x)=\mathcal{N}_{C}(x)=\big\{v\in\mathbb{R}^n\ |\ \langle v,z-x\rangle\le0\ \ \forall z\in C\big\}$. 
\subsection{Metric regularity and subregularity}\label{sec2.1}

A multifunction $\mathcal{F}\!:\mathbb{R}^n\rightrightarrows\mathbb{R}^m$ is said to be metrically regular at a point $(\overline{x},\overline{y})\in{\rm gph}\,\mathcal{F}$ if there exist $\delta>0$ and $\kappa>0$ such that for all $x,y\in\mathbb{B}((\overline{x},\overline{y}),\delta)$, 
\[
{\rm dist}(x,\mathcal{F}^{-1}(y))\le\kappa\,{\rm dist}(y,\mathcal{F}(x)),
\]
and that $\mathcal{F}$ is (metrically) subregular at $(\overline{x},\overline{y})$ if there exist $\delta>0$ and $\kappa>0$ such that 
\[
{\rm dist}(x,\mathcal{F}^{-1}(\overline{y}))\le\kappa\,{\rm dist}(\overline{y},\mathcal{F}(x))\quad\forall x\in\mathbb{B}(\overline{x},\delta).
\]
Clearly, the metric regularity of $\mathcal{F}$ at $(\overline{x},\overline{y})$ implies its subregularity at this point. 

Let $H\!:\mathbb{R}^n\to\mathbb{R}^m$ be a continuously differentiable mapping.
From \cite[Sections 2.3.2-2.3.4]{BS00}, the system $H(z)\in\mathbb{R}_{-}^m$ satisfies the RCQ (also known as the MFCQ for the constraint $H(z)\le 0$) at $x\in C:=H^{-1}(\mathbb{R}_{-}^m)$ if and only if the multifunction
\begin{equation}\label{mapFg}
\mathcal{F}_{\!H}(z):=H(z)-\mathbb{R}_{-}^m\quad{\rm for}\ z\in\mathbb{R}^n
\end{equation}
is metrically regular at $(x,0)\in{\rm gph}\mathcal{F}_{\!H}$. Due to the smoothness of $H$, according to \cite[Proposition 2.97]{BS00}, the RCQ at a point $x\in C$ can equivalently be characterized as 
\[
  {\rm Ker}(\nabla\!H(x))\cap\mathcal{N}_{\mathbb{R}_{-}^m}(H(x))=\{0_{\mathbb{R}^m}\}.
\]
If the mapping $\mathcal{F}_{\!H}$ in \eqref{mapFg} is subregular at $(x,0)\in{\rm gph}\mathcal{F}_{\!H}$, from \cite[Page 211]{Ioffe09} it follows $\mathcal{N}_{C}(x)\subset\nabla H(x)\mathcal{N}_{\mathbb{R}_{-}^m}(H(x))$, which along with $\mathcal{N}_{C}(x)\supset\widehat{\mathcal{N}}_{C}(x)\supset\nabla H(x)\mathcal{N}_{\mathbb{R}_{-}^m}(H(x))$ implies $\mathcal{N}_{C}(x)=\nabla H(x)\mathcal{N}_{\mathbb{R}_{-}^m}(H(x))=\widehat{\mathcal{N}}_{C}(x)$, i.e., the Clarke regularity of $C$. 
\begin{remark}\label{remark-Abadie}
 When all components of $H$ are also convex, from the proof for the sufficiency of \cite[Theorem 3.5]{Liwu97}, the subregularity of $\mathcal{F}_{\!H}$ at $(x,0)$ with $x\in C$ is implied by the Abadie's CQ for the system $H(z)\in\mathbb{R}_{-}^m$ at $x$. As well known, for differentiable convex optimization problems, the Abadie's CQ is the weakest condition to guarantee the characterization of an optimal solution by Karush-Kuhn-Tucker (KKT) conditions.
 
\end{remark}

\subsection{Stationary points of problem \eqref{prob}}\label{sec2.2}

Let $x^*$ be a local optimal solution of \eqref{prob}. From  \cite[Theorem 10.1 \& Corollary 10.9]{RW98}, $0\in\partial F(x^*)\subset\nabla\!f(x^*)+\partial\phi(x^*)+\partial(-\psi)(x^*)+\mathcal{N}_{\Gamma}(x^*)$, where $\mathcal{N}_{\Gamma}(x^*)=\nabla g(x^*)\mathcal{N}_{\mathbb{R}_{-}^m}(g(x^*))$ if the mapping $\mathcal{F}_{\!g}(\cdot):=g(\cdot)-\mathbb{R}_{-}^m$ in \eqref{mapFg} is subregular at $(x^*,0)$. This motivates us to introduce the following two classes of stationary points for problem \eqref{prob}. 
\begin{definition}\label{sspoint-def}
 A vector $x\in\Gamma$ is said to be a strong stationary point of problem \eqref{prob} if
 \[
0\in\nabla\!f(x)+\partial\phi(x)+\partial(-\psi)(x)+\nabla g(x)\mathcal{N}_{\mathbb{R}_{-}^m}(g(x)),
 \]
 or equivalently, there exists a vector $\lambda\in\mathbb{R}^m$ such that $(x,\lambda)$ satisfies
 \begin{subnumcases}{}
 0\in\nabla\!f(x)+\partial\phi(x)+ \partial (-\psi)(x)+\nabla g(x)\lambda,\nonumber\\  g(x)\in\mathbb{R}_{-}^m,\,\lambda\in\mathbb{R}_{+}^m,\,\langle g(x),\lambda\rangle=0.\nonumber
 \end{subnumcases}
 In the sequel, we denote by $X_{s}^*$ the set of strong stationary points of problem \eqref{prob}.
\end{definition}
\begin{definition}\label{spoint-def}
 A vector $x\in\Gamma$ is said to be a stationary point of problem \eqref{prob} if
 \[
0\in\nabla\!f(x)+\partial\phi(x)-\partial\psi(x)+\nabla g(x)\mathcal{N}_{\mathbb{R}_{-}^m}(g(x)), 
\]
while it is called a singular stationary point of \eqref{prob} if $0\in\nabla\!f(x)+\partial\phi(x)-\partial\psi(x)$. We denote by $X^*$ (resp. $\widehat{X}^*$) the set of stationary points (resp. singular stationary points) of \eqref{prob}.
\end{definition}

The stationary point in terms of Definition \ref{spoint-def} is common in the literature on DC problems (see \cite[Definition 2.2]{YuPongLv21} or \cite[Theorem 4]{Le24} with $F_1(x)\equiv x$). Since the inclusion $\partial(-\psi)(x)\subset-\partial\psi(x)$ is generally strict, the set $X_{s}^*$ is smaller than $X^*$. This interprets why we use ``strong'' to decorate the stationary point in Definition \ref{sspoint-def}. Obviously, the set of singular stationary points is also strictly contained in the set of stationary points, but has no inclusion relation with the set of strong stationary points.
\subsection{Partial BMP of parametric constraint system}\label{sec2.3}

In this section, we concentrate on the following parametric optimization problem
\begin{equation}\label{para-prob}
\min_{x\in\mathbb{R}^n}\big\{\theta(u,x)+\phi(x)\ \ {\rm s.t.}\ \ H(u,x)\in\mathbb{R}_{-}^m\big\},
\end{equation} 
where $u$ is the perturbation parameter from a real vector space $\mathbb{U}$, and $\theta\!:\mathbb{U}\times\mathbb{R}^n\to\mathbb{R}$ and $H\!:\mathbb{U}\times\mathbb{R}^n\to\mathbb{R}^m$ are continuously differentiable functions. The partial BMP was first used in \cite{Gfrerer17} to study the stability of the parametric constraint system. To present its formal definition, we define the solution mapping to the system $H(u,x)\in\mathbb{R}_{-}^m$ by
\begin{equation}\label{MapS}
\mathcal{S}(u):=\big\{x\in\mathbb{R}^n\ |\  H(u,x)\in\mathbb{R}_{-}^m\big\}\quad{\rm for\ all}\  u\in\mathbb{U}.
\end{equation}
\begin{definition}\label{def-BMP}
(see \cite[Definition 3.1]{Gfrerer17}) The constraint system of \eqref{para-prob} is said to satisfy the partial BMP with respect to $x$ at $(u^*,x^*)\in{\rm gph}\,\mathcal{S}$ if there exist $\kappa>0$ and the neighborhoods $\mathcal{U}$ of $u^*$ and $\mathcal{V}$ of $x^*$ such that for all $u\in\mathcal{U},x\in\mathcal{V}\cap \mathcal{S}(u)$ and $y\in\mathcal{N}_{\mathcal{S}(u)}(x)$,
\[
 \Lambda(u,x,y)\cap\kappa \|y\|\mathbb{B}\ne\emptyset\ \ {\rm with}\ \Lambda(u,x,y)\!:=\big\{\lambda\in\mathcal{N}_{\mathbb{R}_{-}^m}(H(u,x))\ |\ \nabla_{\!x}H(u,x)\lambda=y\big\}.
\]
\end{definition}  

Recall that $x^*\in\mathbb{R}^n$ is a stationary point of \eqref{para-prob} associated with $u=u^*\in\mathbb{U}$ if there exists  $\lambda^*\!\in\mathcal{N}_{\mathbb{R}_{-}^m}(H(u^*,x^*))$ such that
$0\in\nabla_{\!x} \theta(u^*,x^*)+\partial\phi(x^*)+\nabla_{\!x}H(u^*,x^*)\lambda^*$. In view of this, we define the Lagrange multiplier set of \eqref{para-prob} associated with $(u,x)$ by
\begin{equation}\label{para-multiplier}
 \mathcal{M}(u,x)\!:=\!\Big\{\lambda\in\mathcal{N}_{\mathbb{R}_{-}^m}(H(u,x))\ |\ 0\in\nabla\!_x\theta(u,x)\!+\!\partial\phi(x)+\!\nabla\!_xH(u,x)\lambda\Big\}.
\end{equation}
Clearly, $\mathcal{M}(u^*,x^*)\ne\emptyset$ if $x^*$ is a stationary point of \eqref{para-prob} associated with $u=u^*$. By comparing with the expression of the mapping $\Lambda$ in Definition \ref{def-BMP}, if $x^*$ is a stationary point of \eqref{para-prob} with $u=u^*$, then
$\mathcal{M}(u^*,x^*)\supset\bigcup_{v^*\in\partial\phi(x^*)}\Lambda(u^*,x^*,-\nabla_{\!x}\theta(u^*,x^*)-v^*)$.

Next, for a strongly convex problem \eqref{para-prob} associated with $\overline{u}\in\mathbb{U}$, we establish a global error bound for the distance of any $x\in\mathbb{R}^n$ from its unique solution in terms of the violation of KKT conditions, which will be used in Section \ref{sec4} to establish Proposition \ref{prop-eboundk}. Its proof is similar to that of \cite[Theorem 2.2]{Mangasarian88} but needs a weaker CQ. 
\begin{proposition}\label{prop-ebound} 
 Fix any $\overline{u}\in\mathbb{U}$. Suppose that $\theta(\overline{u},\cdot)$ is strongly convex with modulus $\rho(\overline{u})$ and that all components of $H(\overline{u},\cdot)$ are convex with $\mathcal{S}(\overline{u})\ne\emptyset$. Let $\overline{x}$ be the unique optimal solution of \eqref{para-prob} associated with $\overline{u}\in\mathbb{U}$. If the mapping $\mathcal{H}(\cdot):=H(\overline{u},\cdot)-\mathbb{R}_{-}^m$ is subregular at $(\overline{x},0)$, then for any $\overline{\lambda}\in\mathcal{N}_{\mathbb{R}_{-}^m}(H(\overline{u},\overline{x}))$, $(x,\lambda)\in\mathbb{R}^n\times\mathbb{R}_+^m$ and $v\in\partial\phi(x)$, it holds that
 \begin{align*}
 \rho(\overline{u})\|x-\overline{x}\|^2&\!\le\langle x,\nabla_{\!x}\theta(\overline{u},x)+v+\nabla_{\!x} H(\overline{u},x)\lambda\rangle-\langle\lambda,H(\overline{u},x)\rangle\\
 &\quad +\interleave\overline{x}\interleave\interleave\nabla_{\!x}\theta(\overline{u},x)\!+\!v\!+\!\nabla_{\!x}H(\overline{u},x)\lambda\interleave_{*}+\interleave\overline{\lambda}\interleave\interleave[H(\overline{u},x)]_{+}\interleave_{*},
\end{align*} 
where $\interleave\cdot\interleave$ represents an arbitrary norm in $\mathbb{R}^n$ and $\interleave\cdot\interleave_{*}$ is the dual norm of $\interleave\cdot\interleave$.
\end{proposition}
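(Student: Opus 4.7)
\medskip
\noindent\textbf{Proof proposal for Proposition \ref{prop-ebound}.}

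The plan is to derive the bound as a chain of inequalities rooted in three ingredients: strong convexity of $\theta(\overline{u},\cdot)$, the KKT system at $\overline{x}$ delivered by the subregularity CQ, and convexity of each $H_i(\overline{u},\cdot)$ at the two distinct linearization points $x$ and $\overline{x}$. As a first step, I would use subregularity of $\mathcal{H}$ at $(\overline{x},0)$: by Ioffe's result cited in the excerpt right before Remark \ref{remark-Abadie}, this forces the Clarke regularity $\mathcal{N}_{\mathcal{S}(\overline{u})}(\overline{x})=\nabla_{\!x}H(\overline{u},\overline{x})\,\mathcal{N}_{\mathbb{R}_{-}^m}(H(\overline{u},\overline{x}))$. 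Since $\overline{x}$ is the unique minimizer of the convex problem \eqref{para-prob} at $\overline{u}$, the generalized Fermat rule and the sum rule give a pair $\overline{\lambda}^{*}\in\mathcal{N}_{\mathbb{R}_{-}^m}(H(\overline{u},\overline{x}))$ and $\overline{v}^{*}\in\partial\phi(\overline{x})$ with $\nabla_{\!x}\theta(\overline{u},\overline{x})+\overline{v}^{*}+\nabla_{\!x}H(\overline{u},\overline{x})\overline{\lambda}^{*}=0$; complementarity $\langle\overline{\lambda}^{*},H(\overline{u},\overline{x})\rangle=0$ is built into membership in the normal cone.

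Next I would add two subgradient inequalities coming from strong convexity. Because $\lambda\ge 0$ and each $H_i(\overline{u},\cdot)$ is convex, the function $x\mapsto\theta(\overline{u},x)+\phi(x)+\langle\lambda,H(\overline{u},x)\rangle$ is $\rho(\overline{u})$-strongly convex, and $r:=\nabla_{\!x}\theta(\overline{u},x)+v+\nabla_{\!x}H(\overline{u},x)\lambda$ is one of its subgradients at $x$; applied between $x$ and $\overline{x}$ this yields
\begin{equation*}
\bigl[\theta(\overline{u},\overline{x})+\phi(\overline{x})+\langle\lambda,H(\overline{u},\overline{x})\rangle\bigr]-\bigl[\theta(\overline{u},x)+\phi(x)+\langle\lambda,H(\overline{u},x)\rangle\bigr]\ge\langle r,\overline{x}-x\rangle+\tfrac{\rho(\overline{u})}{2}\|x-\overline{x}\|^{2}.
\end{equation*}
Similarly, the function $x\mapsto\theta(\overline{u},x)+\phi(x)+\langle\overline{\lambda}^{*},H(\overline{u},x)\rangle$ is $\rho(\overline{u})$-strongly convex and the KKT relation identifies $0$ as one of its subgradients at $\overline{x}$, so between $\overline{x}$ and $x$ we get
\begin{equation*}
\bigl[\theta(\overline{u},x)+\phi(x)+\langle\overline{\lambda}^{*},H(\overline{u},x)\rangle\bigr]-\bigl[\theta(\overline{u},\overline{x})+\phi(\overline{x})\bigr]\ge\tfrac{\rho(\overline{u})}{2}\|x-\overline{x}\|^{2},
\end{equation*}
where the complementarity has already eliminated $\langle\overline{\lambda}^{*},H(\overline{u},\overline{x})\rangle$. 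Summing the two inequalities the $\theta$- and $\phi$-terms cancel and leave
\begin{equation*}
\rho(\overline{u})\|x-\overline{x}\|^{2}\le\langle r,x-\overline{x}\rangle+\langle\overline{\lambda}^{*},H(\overline{u},x)\rangle-\langle\lambda,H(\overline{u},x)\rangle+\langle\lambda,H(\overline{u},\overline{x})\rangle.
\end{equation*}

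To conclude, I would drop the nonpositive term $\langle\lambda,H(\overline{u},\overline{x})\rangle$ (using $\lambda\ge 0$ and $\overline{x}$ feasible) and apply Hölder to the remaining linear terms: $\langle r,x-\overline{x}\rangle\le\langle r,x\rangle+\interleave r\interleave_{*}\interleave\overline{x}\interleave$, and $\langle\overline{\lambda}^{*},H(\overline{u},x)\rangle\le\langle\overline{\lambda}^{*},[H(\overline{u},x)]_{+}\rangle\le\interleave\overline{\lambda}^{*}\interleave\interleave[H(\overline{u},x)]_{+}\interleave_{*}$ (since $\overline{\lambda}^{*}\ge 0$ and $H(\overline{u},x)\le[H(\overline{u},x)]_{+}$). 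Substituting the definition of $r$ gives exactly the announced bound with $\overline{\lambda}=\overline{\lambda}^{*}$; for any other $\overline{\lambda}\in\mathcal{N}_{\mathbb{R}_{-}^m}(H(\overline{u},\overline{x}))$, the last Hölder step is identical, and the RHS is monotone in $\interleave\overline{\lambda}\interleave$. The main obstacle is the bookkeeping for pairing the two strong-convexity inequalities, one anchored at $x$ with subgradient $r$, the other anchored at $\overline{x}$ with the KKT multiplier $\overline{\lambda}^{*}$, so that the $\theta$ and $\phi$ contributions telescope while the $H$-terms separate cleanly into a ``KKT residual'' piece $\langle r,\cdot\rangle$ and a ``feasibility violation'' piece $\langle\overline{\lambda}^{*},[H(\overline{u},x)]_{+}\rangle$.
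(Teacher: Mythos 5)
Your main derivation is correct and, up to bookkeeping, uses the same ingredients as the paper's proof: subregularity of $\mathcal{H}$ to identify $\mathcal{N}_{\mathcal{S}(\overline{u})}(\overline{x})$ with $\nabla_{\!x}H(\overline{u},\overline{x})\mathcal{N}_{\mathbb{R}_{-}^m}(H(\overline{u},\overline{x}))$ and extract a KKT pair $(\overline{v}^*,\overline{\lambda}^*)$, strong convexity of $\theta(\overline{u},\cdot)$, convexity of each $H_i(\overline{u},\cdot)$, complementarity at $\overline{x}$, the sign of $\langle\lambda,H(\overline{u},\overline{x})\rangle$, and a final generalized Cauchy--Schwarz step. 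Where the paper starts from the monotonicity inequality for $\nabla_{\!x}\theta(\overline{u},\cdot)+\partial\phi$ and then inserts the two gradient inequalities \eqref{Hconvex-ineq} together with \eqref{ineq-KKT}, you sum two strong-convexity inequalities for the Lagrangian-type functions $\theta(\overline{u},\cdot)+\phi+\langle\lambda,H(\overline{u},\cdot)\rangle$ (anchored at $x$, with subgradient $r$) and $\theta(\overline{u},\cdot)+\phi+\langle\overline{\lambda}^*,H(\overline{u},\cdot)\rangle$ (anchored at $\overline{x}$, with subgradient $0$); the two computations are equivalent term by term, and your packaging is arguably tidier.

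The genuine gap is your last sentence. Having the bound for the particular multiplier $\overline{\lambda}^*$ does not give it ``for any other $\overline{\lambda}\in\mathcal{N}_{\mathbb{R}_{-}^m}(H(\overline{u},\overline{x}))$'': $\overline{\lambda}^*$ enters your chain through the KKT anchor at $\overline{x}$, not as a free parameter, and monotonicity of the right-hand side in $\interleave\overline{\lambda}\interleave$ only helps when $\interleave\overline{\lambda}\interleave\ge\interleave\overline{\lambda}^*\interleave$, which an arbitrary normal-cone element need not satisfy (for instance $\overline{\lambda}=0$ always lies in the cone). In fact the literal ``any $\overline{\lambda}$ in the normal cone'' version is false: take $n=m=1$, $\theta(\overline{u},x)=\frac{1}{2}x^2$, $\phi\equiv0$, $H(\overline{u},x)=1-x$, so $\overline{x}=1$ with unique multiplier $1$; choosing $\overline{\lambda}=0$, $x=0$, $\lambda=0$, $v=0$ makes the left-hand side $1$ and the right-hand side $0$. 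So no argument can close this step; what your Lagrangian computation actually needs --- and all that is used downstream, since Proposition \ref{prop-eboundk} invokes the result with $\overline{\lambda}=\overline{\lambda}^k\in\mathcal{M}(u^k,\overline{x}^k)$ supplied by Proposition \ref{prop-lambark} --- is that $\overline{\lambda}$ is a Lagrange multiplier, i.e.\ satisfies \eqref{ineq-KKT} with some $\overline{v}\in\partial\phi(\overline{x})$. (For what it is worth, the paper's own proof has the mirror image of this slip: after ``pick any $\overline{\lambda}$'' it asserts the existence of such a $\overline{v}$ for that $\overline{\lambda}$, which likewise is justified only when $\overline{\lambda}$ is a multiplier.) If you read the proposition with $\overline{\lambda}$ a multiplier and run your argument with that $\overline{\lambda}$ in place of $\overline{\lambda}^*$, the proof is complete.
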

\begin{proof}
 Since $\mathcal{S}(\overline{u})$ is a nonempty closed convex set and $\theta(\overline{u},\cdot)$ is strongly convex, the problem \eqref{para-prob} associated with $\overline{u}$ has a unique optimal solution $\overline{x}$. By the first-order optimality condition,
 \[   
 0\in\nabla_{\!x}\theta(\overline{u},\overline{x})+\partial\phi(\overline{x})+\mathcal{N}_{\mathcal{S}(\overline{u})}(\overline{x}).
 \] 
 Furthermore, in view of Section \ref{sec2.1}, the subregularity of $\mathcal{H}$ at $(\overline{x},0)$ implies that
 \[
 \mathcal{N}_{\mathcal{S}(\overline{u})}(\overline{x})=\nabla_{\!x} H(\overline{u},\overline{x})\mathcal{N}_{\mathbb{R}_{-}^m}(H(\overline{u},\overline{x})).
 \]
 Pick any $\overline{\lambda}\in\mathcal{N}_{\mathbb{R}_{-}^m}H(\overline{u},\overline{x}))$. From the above two equations, there exists $\overline{v}\in\partial\phi(\overline{x})$ such that 
 \begin{equation}\label{ineq-KKT}
\nabla_{\!x}\theta(\overline{u},\overline{x})+\overline{v}+\nabla_{x} H(\overline{u},\overline{x})\overline{\lambda}=0.
\end{equation}
 Since every $H_{i}(\overline{u},\cdot)$ for $i\in[m]$ is assumed to be convex, it holds that
 \begin{equation*}
 H(\overline{u},y)-H(\overline{u},z)\ge H_{x}'(\overline{u},z)(y-z)\quad\forall y,z\in\mathbb{R}^n.
\end{equation*}  
Now fix any $(x,\lambda)\in\mathbb{R}^n\times\mathbb{R}^m_+$ and $v\in\partial\phi(x)$. Using the nonnegativity of $\lambda$ and $\overline{\lambda}$ and invoking the above inequality with $(y,z)=(x,\overline{x})$ and $(y,z)=(\overline{x},x)$, respectively, yields 
\begin{equation}\label{Hconvex-ineq}
\left\{\begin{array}{cl}
 \langle\overline{\lambda},H(\overline{u},x)-H(\overline{u},\overline{x})\rangle\ge \langle\nabla_{x} H(\overline{u},\overline{x})\overline{\lambda}, x-\overline{x}\rangle,\\
\langle\lambda,H(\overline{u},\overline{x})-H(\overline{u},x)\rangle\ge \langle\nabla_{x} H(\overline{u},x)\lambda, \overline{x}-x\rangle.
\end{array}\right.
\end{equation}
From the strong convexity of $\theta(\overline{u},\cdot)$ with modulus $\rho(\overline{u})$ and the convexity of $\phi$, we have
\begin{align*}
\rho(\overline{u})\|x-\overline{x}\|^2&\le\langle x-\overline{x},\nabla_{\!x}\theta(\overline{u},x)-\nabla_{\!x}\theta(\overline{u},\overline{x})\rangle+\langle x-\overline{x},v-\overline{v}\rangle\nonumber\\
 &=\langle x-\overline{x},\nabla_{\!x}\theta(\overline{u},x)+\nabla_{\!x} H(\overline{u},x)\lambda-\nabla_{\!x}\theta(\overline{u},\overline{x})-\nabla_{\!x} H(\overline{u},\overline{x})\overline{\lambda}\rangle\\
 &\quad\ +\langle x-\overline{x},\nabla_{\!x} H(\overline{u},\overline{x})\overline{\lambda}\rangle-\langle x-\overline{x},\nabla_{\!x} H(\overline{u},x)\lambda\rangle+\langle x-\overline{x},v-\overline{v}\rangle\nonumber\\
 &\stackrel{\eqref{Hconvex-ineq}}{\le}\langle x-\overline{x},\nabla_{\!x}\theta(\overline{u},x)+\nabla_{\!x} H(\overline{u},x)\lambda-\nabla_{\!x}\theta(\overline{u},\overline{x})-\nabla_{\!x} H(\overline{u},\overline{x})\overline{\lambda}\rangle\\
 &\quad +\langle\overline{\lambda},H(\overline{u},x)-H(\overline{u},\overline{x})\rangle+\langle\lambda,H(\overline{u},\overline{x})-H(\overline{u},x)\rangle+\langle x-\overline{x},v-\overline{v}\rangle\nonumber\\
 &\stackrel{\eqref{ineq-KKT}}{\le}\langle x-\overline{x},\nabla_{\!x}\theta(\overline{u},x)+v+\nabla_{\!x} H(\overline{u},x)\lambda\rangle+\langle\overline{\lambda},H(\overline{u},x)\rangle-\langle\lambda,H(\overline{u},x)\rangle\nonumber\\
 &\le\langle x,\nabla_{\!x}\theta(\overline{u},x)+v+\nabla_{\!x} H(\overline{u},x)\lambda\rangle+\interleave\overline{\lambda}\interleave\interleave[H(\overline{u},x)]_{+}\interleave_{*}\\
&\quad+\interleave\overline{x}\interleave\interleave\nabla_{\!x}\theta(\overline{u},x)\!+\!v\!+\!\nabla_{\!x}H(\overline{u},x)\lambda\interleave_{*}-\langle\lambda,H(\overline{u},x)\rangle,
\end{align*}
where the third inequality is also due to  $\overline{\lambda}\in\mathcal{N}_{\mathbb{R}_{-}^m}(H(\overline{u},\overline{x}))$ and $\lambda\in\mathbb{R}_+^m$, and the fourth is obtained by using Cauchy-Schwarz inequality. The proof is finished. 
\end{proof}
\subsection{Kurdyka-{\L}ojasiewicz property}\label{sec2.4}

It is well recognized that the KL property plays a crucial role in the study of global convergence and convergence rates of descent methods for nonconvex and nonsmooth optimization after the seminal works \cite{Attouch09,Attouch13,Bolte14}. Here, we present its formal definition.  
\begin{definition}\label{KL-def}
 For every $\eta>0$, denote $\Upsilon_{\!\eta}$ by the set of continuous concave functions $\varphi\!:[0,\eta)\to\mathbb{R}_{+}$ that are continuously differentiable in $(0,\eta)$ with $\varphi(0)=0$ and $\varphi'(s)>0$ for all $s\in(0,\eta)$. A proper function $h\!:\mathbb{R}^n\to\overline{\mathbb{R}}$ is said to have the KL property at a point $\overline{x}\in{\rm dom}\,\partial h$ if there exist $\delta>0,\eta\in(0,\infty]$ and $\varphi\in\Upsilon_{\!\eta}$ such that for all $x\in\mathbb{B}(\overline{x},\delta)\cap[h(\overline{x})<h<h(\overline{x})+\eta]$,
 \[
  \varphi'(h(x)\!-\!h(\overline{x})){\rm dist}(0,\partial h(x))\ge 1;
 \]
 and it is said to have the KL property of exponent $q\in[0,1)$ at $\overline{x}$ if the above $\varphi$ can be chosen to be the function $\mathbb{R}_{+}\ni t\mapsto ct^{1-q}$ for some $c>0$. If $h$ has the KL property (of exponent $q$) at every point of ${\rm dom}\,\partial h$, it is called a KL function (of exponent $q$).
\end{definition}

By \cite[Lemma 2.1]{Attouch10}, to prove that a proper lsc function has the KL property (of exponent $q\in[0,1)$), it suffices to check if the property holds at its critical points. As discussed in \cite[Section 4]{Attouch10}, the KL functions are ubiquitous, and the functions definable in an o-minimal structure over the real field admit this property. Moreover, definable functions or mappings conform to the chain rules friendly to optimization.
\section{An inexact MBA method}\label{sec3}

The basic idea of our inexact MBA method is to produce in each iteration a feasible solution of \eqref{prob} with an improved objective value by solving inexactly a strongly convex subproblem, constructed with a local majorization of constraint and objective functions at the current iterate. Before describing the iteration steps of the forthcoming inexact MBA method, we first take a closer look at the construction of subproblems. 

Let $x^k\in\Gamma$ be the current iterate. Fix any $\mathbb{R}^m\!\ni L^k>\ell_{\nabla\!g}(x^k)$. According to Assumption \ref{ass1} (i), applying the descent lemma (see \cite[Lemma 5.7]{Beck17}) to all components of $g$, there exists $\delta_{k}>0$ such that $g(x)\le g(x^k)+g'(x^k)(x-x^k)+\frac{1}{2}\|x-x^k\|^2L^k$ for all $x\in\mathbb{B}(x^k,\delta_{k})$. Let $G\!:\mathbb{R}^n\times\mathbb{R}^n\times\mathbb{R}_{++}^m\to\mathbb{R}^m$ be the mapping defined by 
\begin{equation}\label{def-Gmap}
G(x,s,L):=g(s)+g'(s)(x\!-\!s)+\frac{1}{2}\|x\!-\!s\|^2L.
\end{equation}
Then, with 
$\Gamma_{\!k}:=\big\{x\in\mathbb{R}^n\,|\,G(x,x^k,L^k)\le 0\big\}$, it holds
$\Gamma_{\!k}\cap\mathbb{B}(x^k,\delta_{k})\subset\Gamma$. That is, the set $\Gamma_{\!k}\cap\mathbb{B}(x^k,\delta_{k})$ can be regarded as an inner approximation to the feasible set $\Gamma$. Motivated by this, we use $\Gamma_{\!k}$ as the feasible set of the $k$th subproblem. To formulate the objective function of the $k$th subproblem, we choose a positive definite (PD) linear mapping $\mathcal{Q}_k\!:\mathbb{R}^n\to\mathbb{R}^n$ that carries partial or whole second-order information of $f$ at $x^k$. In view of Assumption \ref{ass1} (ii), whenever $\lambda_{\rm min}(\mathcal{Q}_k)> {\rm lip}\,\nabla\!f(x^k)$, applying the descent lemma in \cite[Lemma 5.7]{Beck17}, for any $x$ close enough to $x^k$, 
\[
f(x)\le f(x^k)+\langle\nabla\!f(x^k),x-x^k\rangle+\frac{1}{2}\langle x-x^k,\mathcal{Q}_k(x-x^k)\rangle.
\]
In addition, Assumption \ref{ass1} (iii) and \cite[Proposition 8.32]{RW98} entail $\partial(-\psi)(x^k)\ne\emptyset$. Then, with any $\xi^k\in\partial(-\psi)(x^k)\subset-\partial\psi(x^k)$, the convexity of $\psi$ implies that
\begin{equation}\label{psi-cvx}
 \psi(x)\ge \psi(x^k)-\langle\xi^k,x-x^k\rangle\quad\ \forall x\in\mathbb{R}^n.
\end{equation}
The above two equations show that the following strongly convex quadratic function  
\begin{equation}\label{varthetak}
\vartheta_k(x):=f(x^k)+\langle\nabla\!f(x^k)+\xi^k,x-x^k\rangle+\frac{1}{2}\langle x-x^k,\mathcal{Q}_k(x-x^k)\rangle-\psi(x^k)\quad\forall x\in\mathbb{R}^n
\end{equation}
is a local majorization of the nonconvex and nonsmooth function $f-\psi$ at $x^k$. Our method seeks a stationary point of \eqref{prob} by solving a series of strongly convex problems 
\begin{equation}\label{subprob}
	\min_{x\in\Gamma_{\!k}}\,F_{k}(x):=\vartheta_k(x)+\phi(x).
\end{equation}

Considering that the approximation set $\Gamma_k$ is still an intersection of $m$ closed balls, we continue to use the name ``moving balls approximation'' for our method. Unlike the existing MBA methods, our subproblems capture the second-order information of $f$ at $x^k$ via a variable metric PD linear mapping $\mathcal{Q}_k$, rather than the Lipschitz constant of its gradient. Another notable difference from them is that our MBA method requires in each iteration an inexact solution $y^k$ of subproblem \eqref{subprob}, rather than its exact one. This is extremely significant in practice because the exact solution, denoted by $\overline{x}^k$, is not available due to computation error or computation cost. Then, how to design an implementable inexactness criterion becomes a challenging task. 

For the inexact solution $y^k$ of \eqref{subprob}, it is natural to require its objective value $F_k(y^k)$ to be neither far from the optimal value $F_k(\overline{x}^k)$ of \eqref{subprob} nor greater than the current objective value $F(x^k)=F_k(x^k)$ of \eqref{prob}. However, such a restriction is not enough to achieve the desirable convergence results due to the lack of multiplier information. To address this issue, we require that the KKT residual at $y^k$ with respect to (w.r.t.) some $(v^k,\lambda^k)\in\partial\phi(y^k)\times\mathbb{R}_{+}^m$ can be controlled by the difference between $y^k$ and $x^k$. Consider that under a mild CQ, $\overline{x}^k$ is the unique optimal solution to \eqref{subprob} iff there exist $\overline{v}^k\in\partial\phi(\overline{x}^k)$ and $\overline{\lambda}^k\!\in\mathcal{N}_{\mathbb{R}_{-}^m}(G(\overline{x}^k,x^k,L^k))$ such that $\nabla\vartheta(\overline{x}^k)+\overline{v}^k+\nabla_{\!x} G(\overline{x}^k,x^k,L^k)\overline{\lambda}^k=0$. We define the KKT residual function $R_k\!:\mathbb{R}^n\times\mathbb{R}^n\times\mathbb{R}^m_+\to\mathbb{R}$ of \eqref{subprob} by
\begin{align}\label{resi-sub}
 R_k(x,v,\lambda)&:=\big(\langle x,\nabla\vartheta_k(x)\!+\!v\!+\!\nabla_{\!x} G(x,x^k,L^k)\lambda\rangle\big)_{+}+\big(\!-\!\langle\lambda,G(x,x^k,L^k)\rangle\big)_{+}\nonumber\\
&\quad\ +\big\|\nabla\vartheta_k(x)\!+\!v\!+\!\nabla_{\!x} G(x,x^k,L^k)\lambda\big\|+\big\|[G(x,x^k,L^k)]_{+}\big\|_{\infty}.
\end{align}
According to the above discussions, we adopt the conditions \eqref{inexact1}-\eqref{inexact2} as the inexactness criterion for $y^k$. Although it involves the unknown optimal value $F_{k,j}(\overline{x}^{k,j})$, as will be discussed in Remark \ref{remark-alg} (b) below, a lower bound of $F_{k,j}(\overline{x}^{k,j})$ can be achieved by solving the dual of \eqref{subprob}. Thus, our inexactness criterion is implementable. 

Recall that $\vartheta_{k}$ is a local majorization of $f-\psi$ at $x^{k}$ when $\lambda_{\rm min}(\mathcal{Q}_k)>{\rm lip}\,{\nabla\!f}(x^k)$, and the set $\Gamma_k$ near $x^k$ is a local inner approximation of the feasible set $\Gamma$. At the $k$th iteration, the approximation effect of subproblem \eqref{subprob} to the original problem \eqref{prob} depends on that of $\lambda_{\rm min}(\mathcal{Q}_k)$ to ${\rm lip}\,{\nabla\!f}(x^k)$ and $L^{k}$ to $\ell_{\nabla g}(x^k)$ from above, and the latter also has a great influence on the efficiency of algorithms to solve subproblem \eqref{subprob}. Since in many scenarios ${\rm lip}\,{\nabla\!f}(x^k)$ and $\ell_{\nabla g}(x^k)$ are unknown, our inexact MBA method in each iteration searches for tight upper estimations $\lambda_{\rm min}(\mathcal{Q}_k)$ and $L^k$, and meanwhile solves the associated subproblem. The iteration steps of our inexact MBA method are described in Algorithm \ref{iMBA} below where, for each $k,j\in\mathbb{N}$, 
\[
 \vartheta_{k,j}(x):=\langle\nabla\!f(x^k)+\xi^k,x\!-\!x^k\rangle+\frac{1}{2}\langle x\!-\!x^k,\mathcal{Q}_{k,j}(x\!-\!x^k)\rangle+f(x^k)-\psi(x^k)\ \ \forall x\in\mathbb{R}^n,
\]
the function $R_{k,j}$ is defined as in \eqref{resi-sub} except that $\vartheta_k$ and $L^k$ are replaced by $\vartheta_{k,j}$ and $L^{k,j}$, respectively, and $\overline{x}^{k,j}$ denotes the exact solution of subproblem \eqref{subprobkj}. 
\begin{algorithm}
 \caption{\label{iMBA}{\bf (Inexact MBA method for problem \eqref{prob})}}
\begin{algorithmic}[1]	
 \State{\textbf{Input}: $0<\!\mu_{\min}\le\mu_{\rm max},0<\!L_{\min}\le\! L_{\max},M>0,\beta_{R}>0,\beta_{F}\!>0,\alpha>0,\tau\!>1$ \hspace*{1.1cm} and an initial $x^0\in\Gamma$.}
	
 \For{$k=0,1,2,\ldots$} 
 \State{Choose $\xi^k\in\partial(-\psi)(x^k)$, $\mu_{k,0}\in[\mu_{\min},\mu_{\max}]$ and $L^{k,0}\in[L_{\min},L_{\max}]$.}		
 \For{$j=0,1,2,\ldots$}
 \State{ Choose a linear mapping $\mathcal{Q}_{k,j}\!:\mathbb{R}^n\to\mathbb{R}^n$ with $\mu_{k,j}\mathcal{I}\preceq \mathcal{Q}_{k,j}\preceq (\mu_{k,j}\!+\!M)\mathcal{I}$ \hspace*{1.1cm} to formulate $\vartheta_{k,j}$, and solve the following minimization problem  
\begin{equation}\label{subprobkj}
 \qquad\min_{x\in\Gamma_{k,j}}F_{k,j}(x):=\vartheta_{k,j}(x)+\phi(x)\ \ {\rm with}\ \Gamma_{k,j}\!:=[G(\cdot,x^k,L^{k,j})]^{-1}(\mathbb{R}_{-}^m)
 \end{equation}
 \hspace*{1.0cm} to seek a $y^{k,j}\!\in\mathbb{R}^n$ such that there are $v^{k,j}\in\partial\phi(y^{k,j}),\lambda^{k,j}\!\in\mathbb{R}^m_+$ satisfying
 \begin{subnumcases}{}\label{inexact1}
  R_{k,j}(y^{k,j},v^{k,j},\lambda^{k,j})\le \frac{\beta_{R}}{2}\|y^{k,j}-x^k\|^2,\\
  \label{inexact2}
  F_{k,j}(y^{k,j})\le F_{k,j}(x^k),\,F_{k,j}(y^{k,j})-F_{k,j}(\overline{x}^{k,j})\le \frac{\beta_{F}}{2}\|y^{k,j}-x^k\|^2.
 \end{subnumcases}
 }
			
 \State{If $g(y^{k,j})\le 0$ and $F(y^{k,j})\le F(x^k)-\frac{\alpha}{2}\|y^{k,j}-x^k\|^2$, set $j_k\!:=j$ and go to \hspace*{1.1cm} step 10; else if $g(y^{k,j})\not\leq 0$, go to step 7; else go to step 8.} 
			
 \State{Set $L^{k,j+1}=\tau L^{k,j}$ and $\mu_{k,j+1}=\mu_{k,j}$.}
			
 \State{Set $L^{k,j+1}=L^{k,j}$ and $\mu_{k,j+1}=\tau\mu_{k,j}$.}		    
\EndFor
		
\State{Set $(\mu_k,L^k)\!:=\!(\mu_{k,j_k},L^{k,j_k}), (\overline{x}^{k},x^{k+1},v^{k+1},\lambda^{k+1})\!:=(\overline{x}^{k,j_k},y^{k,j_k},v^{k,j_k},\lambda^{k,j_k})$  \hspace*{0.4cm} and $\mathcal{Q}_k:=\mathcal{Q}_{k,j_k}$.} 
\EndFor
\end{algorithmic}    
\end{algorithm}
\begin{remark}\label{remark-alg}
 {\bf (a)} Algorithm \ref{iMBA} is well defined under the weakest CQ by Lemma \ref{lemma-welldef}. Its inner loop searches for tight upper estimations $\mu_k$ and $L^k$ respectively for ${\rm lip}\,{\nabla\!f}(x^k)$ and $\ell_{\nabla\!g}(x^k)$ to formulate the subproblem and meanwhile computes an inexact solution $y^{k,j}$ of the subproblem to satisfy \eqref{inexact1}-\eqref{inexact2}. Apparently, a good initialization $\mu_{k,0}$ and $L^{k,0}$ can reduce the computation cost of the inner loop. Inspired by the Barzilai-Borwein rule \cite{Barzilai88} for mimicking the Hessian information of a smooth function \cite{Wright09}, we suggest capturing $\mu_{k,0}$ and $L^{k,0}$ via  
 \begin{align}\label{BB-muk0}
 \mu_{k,0}=\min\bigg\{\max\bigg\{\frac{\|\Delta y^k\|^2}{|\langle\Delta x^k,\Delta y^k\rangle|}, \frac{|\langle\Delta x^k,\Delta y^k\rangle|}{\|\Delta x^k\|^2},\mu_{\rm min}\bigg\},\mu_{\rm max}\bigg\},\qquad\quad\\ 
 \label{BB-Lk0}
 L_i^{k,0}=\min\bigg\{\max\bigg\{\frac{\|\Delta z_i^k\|^2}{|\langle\Delta x^k,\Delta z_i^k\rangle|}, \frac{|\langle\Delta x^k,\Delta z_i^k\rangle|}{\|\Delta x^k\|^2},L_{\rm min}\bigg\}, L_{\rm max}\bigg\}\ \ \forall i\in[m],
 \end{align}
 where $\Delta x^k:=x^k-x^{k-1},\Delta y^k:=\nabla\!f(x^k)-\nabla\!f(x^{k-1})$ and $\Delta z_i^k:=\nabla\!g_i(x^k)-\nabla\!g_i(x^{k-1})$.
	
 \noindent
 {\bf(b)} Our inexactness criterion, consisting of \eqref{inexact1}-\eqref{inexact2}, is different from the one in \cite[Definition 7]{Boob24} for problem \eqref{prob} with $\psi\equiv 0$ and every $g_i$ being a sum of an L-smooth function and a continuous convex function. Our criterion controls the feasibility and complementarity violation of $y^{k,j}$ by leveraging the KKT residual, while the criterion of \cite{Boob24} directly controls the feasibility violation of $y^{k,j}$ but requires $\mathcal{L}_{k,j}(y^{k,j},\overline{\lambda}^{k,j})\le\mathcal{L}_{k,j}(\overline{x}^{k,j},\overline{\lambda}^{k,j})+\epsilon$ to control the complementarity violation of $y^{k,j}$, where $\mathcal{L}_{k,j}$ is the Lagrange function of subproblem \eqref{subprobkj},  $(\overline{x}^{k,j},\overline{\lambda}^{k,j})$ is the exact primal-dual solution of \eqref{subprobkj}, and $\epsilon>0$ is the tolerance of inexactness. Since the inexactness criterion of \cite{Boob24} involves the unknown $(\overline{x}^{k,j},\overline{\lambda}^{k,j})$, it is unclear whether it is workable in practice. Although the second inequality in \eqref{inexact2}, like \cite[Definition 7]{Boob24}, also involves the unknown optimal value $F_{k,j}(\overline{x}^{k,j})$, one can achieve a lower bound $F_{k,j}^{\rm LB}$ for it by solving the dual of \eqref{subprobkj}, so any $y^{k,j}$ with $F_{k,j}(y^{k,j})\!-F_{k,j}^{\rm LB}\le\!\frac{\alpha}{2}\|y^{k,j}\!-\!x^k\|^2$ satisfies \eqref{inexact2}.    

 \noindent
 {\bf(c)} The computation cost of Algorithm \ref{iMBA} at each iteration is not expensive despite involving an inner loop. By Lemma \ref{inner-step}, the number of steps in the inner loop can be quantified in terms of the logarithms of the Lipschitz constants of $\nabla\!f$ and $\nabla\!g_i$ for $i\in[m]$ on a compact set and the parameters $\mu_{k,0},L^{k,0},\beta_{F}$ and $\beta_{R}$. Not only that, the computation cost of each step is cheap because it does not involve any new computation of $\nabla\!f(x^k)$ and $\nabla g_i(x^k)$.    
	
 \noindent
 {\bf(d)} We claim that $x^{k}$ is a strong stationary point of \eqref{prob} in the sense of Definition \ref{sspoint-def} as long as $x^{k}=x^{k-1}$. Indeed, when $x^{k}=x^{k-1}$, by the definition of $x^k$ in step 10, $y^{k-1,j_{k-1}}=x^{k-1}$. Together with the inexactness condition \eqref{inexact1}, there exist $v^k\in\partial\phi(x^k)$ and $\lambda^k\in\mathbb{R}_{+}^m$ such that $R_{k-1,j_{k-1}}(x^{k},v^{k},\lambda^{k})=0$, which by the expression of $R_{k-1,j_{k-1}}$ is equivalent to 
 \begin{subnumcases}{}
 \nabla\vartheta_{k-1,j_{k-1}}(x^{k})+v^{k}+\nabla_{\!x} G(x^{k},x^{k-1},L^{k-1})\lambda^{k}=0,\nonumber\\
 \lambda^{k}\ge 0,\,G(x^{k},x^{k-1},L^{k-1})\le 0,\,\langle\lambda^{k},G(x^{k},x^{k-1},L^{k-1})\rangle=0.\nonumber
 \end{subnumcases}
 Then, using the expressions of $\vartheta_{k-1,j_{k-1}}$ and $G(\cdot,x^{k-1},L^{k-1})$ along with $v^k\in\partial\phi(x^k)$ yields
 \begin{subnumcases}{}
 0\in\nabla\!f(x^{k-1})+\partial\phi(x^{k})+\xi^{k-1}\!+\!\nabla g(x^{k-1})\lambda^{k},\\
 \lambda^{k}\ge 0,\,g(x^{k})\le 0,\,\langle\lambda^{k},g(x^{k})\rangle=0.
 \end{subnumcases}
 This, along with $x^{k}=x^{k-1}$ and $\xi^{k-1}\in\partial(-\psi)(x^{k-1})=\partial(-\psi)(x^{k})$, shows that $x^{k}$ (of course $x^{k-1}$) is a strong stationary point of problem \eqref{prob} in the sense of Definition \ref{sspoint-def}. 
\end{remark}

To ensure that Algorithm \ref{iMBA} is well defined, we need the following assumption, the weakest CQ for the multiplier set of \eqref{subprobkj} to be nonempty in view of Remark \ref{remark-Abadie}, since for each $k,j\in\mathbb{N}$ the system $G(\cdot,x^k,L^{k,j})\in\mathbb{R}_{-}^m$ is smooth and convex.
\begin{assumption}\label{ass2}
 For each $k,j\in\mathbb{N}$, the mapping $\mathcal{G}_{k,j}(\cdot):=G(\cdot,x^k,L^{k,j})-\mathbb{R}_{-}^m$ is subregular at $(\overline{x}^{k,j},0)$. 
\end{assumption}

 Now we prove that Algorithm \ref{iMBA} is well-defined by arguing that the conditions \eqref{inexact1}-\eqref{inexact2} are well-defined and the inner loop stops within a finite number of steps.
\begin{lemma}\label{lemma-welldef}
 Under Assumption \ref{ass2}, for each $k\in\mathbb{N}$ such that $x^k\in\Gamma$ is a non-stationary point, 
 \begin{description}
 \item[(i)] the inexactness conditions \eqref{inexact1}-\eqref{inexact2} are well-defined; 
		
 \item[(ii)] the inner loop of Algorithm \ref{iMBA} stops within a finite number of iterations.
 \end{description}
 Consequently, Algorithm \ref{iMBA} is well-defined with $\{x^k\}_{k\in\mathbb{N}}\subset\Gamma$ and $x^k\in\Gamma_{k,j}$ for each $k,j\in\mathbb{N}$.
\end{lemma}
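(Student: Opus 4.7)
The plan is to handle the two parts separately and then stitch them together by induction. For part (i), I would argue that the inexactness conditions are in fact met by the exact solution itself. Because $\vartheta_{k,j}$ is strongly convex with modulus at least $\mu_{k,j}\!>\!0$ and $\phi$ is convex, the subproblem \eqref{subprobkj} admits a unique optimal solution $\overline{x}^{k,j}$; moreover, since $G(x^k,x^k,L^{k,j})=g(x^k)\le 0$, the current iterate $x^k$ belongs to $\Gamma_{k,j}$, so $F_{k,j}(\overline{x}^{k,j})\le F_{k,j}(x^k)$. Assumption \ref{ass2} together with the discussion in Section \ref{sec2.1} yields the KKT representation of $\mathcal{N}_{\Gamma_{k,j}}(\overline{x}^{k,j})$ and hence the existence of multipliers $\overline{v}^{k,j}\in\partial\phi(\overline{x}^{k,j})$ and $\overline{\lambda}^{k,j}\in\mathcal{N}_{\mathbb{R}_{-}^m}(G(\overline{x}^{k,j},x^k,L^{k,j}))$ making $R_{k,j}(\overline{x}^{k,j},\overline{v}^{k,j},\overline{\lambda}^{k,j})=0$. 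Thus $y^{k,j}=\overline{x}^{k,j}$ trivially satisfies \eqref{inexact1}-\eqref{inexact2}, so the criterion is achievable.

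For part (ii) I would show that once $(\mu_{k,j},L^{k,j})$ is large enough, both checks of step 6 automatically pass, after which the geometric updates in steps 7-8 (with factor $\tau\!>\!1$) guarantee finite termination. A clean bound on the iterate displacement comes from the first inequality of \eqref{inexact2}: using strong convexity and $v^*\in\partial\phi(x^k)$, one derives $\|y^{k,j}-x^k\|\le 2C_k/\mu_{k,j}$, where $C_k=\|\nabla f(x^k)+\xi^k+v^*\|$, so $y^{k,j}$ lies in an arbitrarily small ball around $x^k$ when $\mu_{k,j}$ is large. Pick a compact neighborhood of $x^k$ where $\nabla\!f$ and each $\nabla\!g_i$ are Lipschitz with constants $\overline{\ell}_f$ and $\overline{\ell}_i$. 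Applying the descent lemma componentwise to $g$ gives
\[
g_i(y^{k,j})\le G_i(y^{k,j},x^k,L^{k,j})+\tfrac{1}{2}(\overline{\ell}_i-L_i^{k,j})\|y^{k,j}-x^k\|^2,
\]
while \eqref{inexact1} provides $G_i(y^{k,j},x^k,L^{k,j})\le (\beta_R/2)\|y^{k,j}-x^k\|^2$. Combining these two, the threshold $L_i^{k,j}\ge \overline{\ell}_i+\beta_R$ forces $g(y^{k,j})\le 0$, i.e.\ the feasibility check in step 6.

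Once feasibility is secured, sufficient decrease is argued analogously: the descent lemma on $f$ and the subgradient inequality $-\psi(x)\le -\psi(x^k)+\langle\xi^k,x-x^k\rangle$ give $f(y)-\psi(y)\le\vartheta_{k,j}(y)-\tfrac{1}{2}(\mu_{k,j}-\overline{\ell}_f)\|y-x^k\|^2$ for $y$ in the neighborhood, so that $F(y^{k,j})\le F_{k,j}(y^{k,j})-\tfrac{1}{2}(\mu_{k,j}-\overline{\ell}_f)\|y^{k,j}-x^k\|^2\le F(x^k)-\tfrac{1}{2}(\mu_{k,j}-\overline{\ell}_f)\|y^{k,j}-x^k\|^2$ by the first inequality of \eqref{inexact2} and the identity $F_{k,j}(x^k)=F(x^k)$. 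Hence $\mu_{k,j}\ge \overline{\ell}_f+\alpha$ yields the sufficient decrease of step 6. Because steps 7 and 8 respectively multiply $L^{k,j}$ or $\mu_{k,j}$ by $\tau\!>\!1$ (while never decreasing the other), both thresholds are crossed after finitely many inner iterations, and the loop exits.

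Finally, the induction is immediate: $x^0\in\Gamma$ by input, and if $x^k\in\Gamma$, the termination of the inner loop via step 6 yields $x^{k+1}=y^{k,j_k}$ with $g(x^{k+1})\le 0$, i.e.\ $x^{k+1}\in\Gamma$; the inclusion $x^k\in\Gamma_{k,j}$ is the identity $G(x^k,x^k,L^{k,j})=g(x^k)\le 0$. The main obstacle is really the tight coupling between inexactness, strong convexity and the two descent lemmas: one needs to ensure that, in the inner loop, incrementing only $L^{k,j}$ (or only $\mu_{k,j}$) never undoes progress on the other variable, which is why I would rely on the $\mu_{k,j}$-independent bound $\|y^{k,j}-x^k\|\le 2C_k/\mu_{k,j}$ and on the fact that both thresholds $\overline{\ell}_i+\beta_R$ and $\overline{\ell}_f+\alpha$ are monotonically stable under increases of the other parameter.
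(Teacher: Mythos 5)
Your part (i) does not prove what the lemma is actually asserting. Observing that the exact minimizer $y^{k,j}=\overline{x}^{k,j}$ satisfies \eqref{inexact1}--\eqref{inexact2} is true but trivial (it never uses the hypothesis that $x^k$ is non-stationary, which is a warning sign), and it makes the inexactness criterion vacuous: a certificate consisting only of the exact solution would force exactly the exact solve the method is designed to avoid. The paper's proof establishes more: using non-stationarity of $x^k$ together with Assumption \ref{ass2} (which gives $\mathcal{N}_{\Gamma_{k,j}}(x^k)=\nabla_{\!x}G(x^k,x^k,L^{k,j})\mathcal{N}_{\mathbb{R}_{-}^m}(g(x^k))$), one first shows $\overline{x}^{k,j}\neq x^k$, so the tolerances $\frac{\beta_R}{2}\|\overline{x}^{k,j}-x^k\|^2$ and $\frac{\beta_F}{2}\|\overline{x}^{k,j}-x^k\|^2$ are \emph{strictly} positive; then, by outer semicontinuity of $\partial\phi$ and continuity of $R_{k,j}(\cdot,\cdot,\overline{\lambda}^{k,j})$ and of $F_{k,j}$, every point $y$ in a whole neighborhood of $\overline{x}^{k,j}$ (with some $v\in\partial\phi(y)$ and the same multiplier $\overline{\lambda}^{k,j}$) satisfies \eqref{inexact1}--\eqref{inexact2}. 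That neighborhood is the substance of "well-defined": an inner solver whose iterates merely converge to $\overline{x}^{k,j}$ triggers the criterion after finitely many steps. You should add the step $\overline{x}^{k,j}=x^k\Rightarrow x^k$ stationary (impossible) and the continuity/osc argument.

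Your part (ii) is essentially the direct threshold count of the paper's Appendix Lemma \ref{inner-step}, rather than the main-text proof by contradiction, and it is sound in outline, but two points need repair. First, the bound $\|y^{k,j}-x^k\|\le 2C_k/\mu_{k,j}$ does not shrink in the branch where only $L^{k,j}$ is multiplied by $\tau$ ($\mu_{k,j}$ is unchanged there), so "arbitrarily small ball when $\mu_{k,j}$ is large" cannot justify membership of $y^{k,j}$ in the region where the descent lemma for $g_i$ is applied; you must either work on the fixed ball of radius $2C_k/\mu_{k,0}$ (legitimate for $g$, which is $\mathcal{C}^{1,1}$ on all of $\mathbb{R}^n$) or, as in the paper's Case (a), deduce $\|y^{k,j}-x^k\|\to0$ directly from \eqref{inexact1} once $L^{k,j}\to\infty$. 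Second, under Assumption \ref{ass1}(ii) $\nabla\!f$ is only strictly continuous on $\mathcal{O}\supset\Gamma$, so $\overline{\ell}_f$ must be the Lipschitz constant on a small ball $\mathbb{B}(x^k,r)\subset\mathcal{O}$, and the $\mu$-threshold must be strengthened to $\mu_{k,j}\ge\max\{\overline{\ell}_f+\alpha,\,2C_k/r\}$ so the iterate actually lies in that ball before the descent lemma for $f$ is invoked (the appendix lemma sidesteps this by assuming $\nabla\!f$ strictly continuous on all of $\mathbb{R}^n$; the main-text contradiction proof needs only local information near $x^k$ because along the hypothetical infinite subsequence the displacement tends to zero). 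With these repairs your threshold/termination count and the final induction giving $\{x^k\}_{k\in\mathbb{N}}\subset\Gamma$ and $x^k\in\Gamma_{k,j}$ go through.
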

\begin{proof}
 Let $k\in\mathbb{N}$ be an index such that $x^k\in\Gamma$ is a non-stationary point of problem \eqref{prob}. Note that the feasible set $\Gamma_{k,j}$ of \eqref{subprobkj} is nonempty due to $G(x^k,x^k,L^{k,j})=\!g(x^k)\in\mathbb{R}_{-}^m$, while its objective function $F_{k,j}$ is strongly convex, so its optimal solution $\overline{x}^{k,j}$ exists. 
	
\noindent
 {\bf(i)} Fix any $j\in\mathbb{N}$. To prove that the inexactness conditions \eqref{inexact1}-\eqref{inexact2} are well-defined, it suffices to consider that $\overline{x}^{k,j}\neq x^k$. If not, $x^{k}$ is an optimal solution of subproblem \eqref{subprobkj}, so 
 \[
  0\in\partial F_{k,j}(x^k)+\mathcal{N}_{\Gamma_{k,j}}(x^k)=\nabla\!f(x^k)+\xi^k+\partial\phi(x^k)+\mathcal{N}_{\Gamma_{k,j}}(x^k).
 \]
 By virtue of Assumption \ref{ass2}, $\mathcal{N}_{\Gamma_{k,j}}(x^k)=\nabla_{\!x} G(x^{k},x^k,L^{k,j})\mathcal{N}_{\mathbb{R}_{-}^m}(G(x^{k},x^k,L^{k,j}))$. Note that $\nabla_{\!x}G(x^{k},x^k,L^{k,j})=\nabla g(x^k)$ and  $G(x^{k},x^k,L^{k,j})=g(x^k)$. Then, the above inclusion becomes \[
  0\in\nabla\!f(x^k)+\xi^k+\partial\phi(x^k)+\nabla g(x^{k})\mathcal{N}_{\mathbb{R}_{-}^m}(g(x^{k})). 
 \]
 This by Definition \ref{spoint-def} entails $x^k$ being a stationary point of \eqref{prob}, which is impossible. 
 
 From the optimality condition of \eqref{subprobkj} at $\overline{x}^{k,j}$, we have $0\in\partial F_{k,j}(\overline{x}^{k,j})+\mathcal{N}_{\Gamma_{k,j}}(\overline{x}^{k,j})$ which, by the expression of $F_{k,j}$ and the above arguments, can equivalently be written as
 \[
  0\in\nabla\vartheta_{k,j}(\overline{x}^{k,j})+\partial\phi(\overline{x}^{k,j})+\nabla_{\!x}G(\overline{x}^{k,j},x^k,L^{k,j})\mathcal{N}_{\mathbb{R}_{-}^m}(G(\overline{x}^{k,j},x^k,L^{k,j})). 
 \]
 Then, there exist  $\overline{v}^{k,j}\in\partial\phi(\overline{x}^{k,j})$ and  $\overline{\lambda}^{k,j}\in\mathbb{R}^m$ such that $(\overline{x}^{k,j},\overline{v}^{k,j},\overline{\lambda}^{k,j})$ satisfies  
 \begin{subnumcases}{}\label{KKT1-subprob}
  \nabla\vartheta_{k,j}(\overline{x}^{k,j})+\overline{v}^{k,j}+\nabla_{\!x}G(\overline{x}^{k,j},x^k,L^{k,j})\overline{\lambda}^{k,j}=0,\qquad\qquad\\
 \label{KKT2-subprob}
 \overline{\lambda}^{k,j}\ge 0,\,G(\overline{x}^{k,j},x^k,L^{k,j})\le0,\,\langle G(\overline{x}^{k,j},x^k,L^{k,j}),\overline{\lambda}^{k,j}\rangle=0. 
 \end{subnumcases} 
 Combining \eqref{KKT1-subprob}-\eqref{KKT2-subprob} with $\overline{x}^{k,j}\neq x^k$ leads to $R_{k,j}(\overline{x}^{k,j},\overline{v}^{k,j},\overline{\lambda}^{k,j})=0<\frac{\beta_{R}}{2}\|\overline{x}^{k,j}-x^k\|^2$. Recall that $\overline{v}^{k,j}\in\partial\phi(\overline{x}^{k,j})$ and $\partial\phi$ is osc at $\overline{x}^{k,j}$ by Assumption \ref{ass1} (iii). There must exist a sequence $\{(x^{l},v^{l})\}_{l\in\mathbb{N}}\subset\mathbb{R}^n\times\partial\phi(x^{l})$ with $(x^{l},v^{l})\to(\overline{x}^{k,j},\overline{v}^{k,j})$ as $l\to\infty$. From the continuity of $R_{k,j}(\cdot,\cdot,\overline{\lambda}^{k,j})$, there exist $x$ close to $\overline{x}^{k,j}$ and $v\in\partial\phi(x)$ such that $(x,v,\overline{\lambda}^{k,j})$ satisfies \eqref{inexact1}. Note that $F_{k,j}(\overline{x}^{k,j})<F_{k,j}(x^k)$ because $x^k$ is a feasible solution of \eqref{subprobkj} by $G(x^k,x^k,L^{k,j})=g(x^k)\le 0$, and $F_{k,j}(\overline{x}^{k,j})-F_{k,j}(\overline{x}^{k,j})<\frac{\beta_{F}}{2}\|\overline{x}^{k,j}-x^k\|^2$ due to $\overline{x}^{k,j}\ne x^k$. The continuity of $F_{k,j}(\cdot)$ implies that any $x$ close to $\overline{x}^{k,j}$ satisfies \eqref{inexact2}. Thus, there exist $y^{k,j}$ close to $\overline{x}^{k,j}$ and $v^{k,j}\in\partial\phi(y^{k,j})$ such that $(y^{k,j},v^{k,j},\overline{\lambda}^{k,j})$ satisfies \eqref{inexact1}-\eqref{inexact2}.
	
 \noindent
 {\bf(ii)} Suppose on the contrary that the inner loop does not stop within a finite number of steps. Then, there exists an infinite index set $J\subset\mathbb{N}$ such that either of the following cases occurs: {\bf(a)} $g(y^{k,j})\not\leq 0$ for all $j\in J$; {\bf(b)} $g(y^{k,j})\le 0$ but $F(y^{k,j})>F(x^k)-\frac{\alpha}{2}\|y^{k,j}-x^k\|^2$ for all $j\in J$. We consider these two cases separately.   
	
 \noindent
 {\bf Case (a)} Now there must exist an infinite index set $J_1\subset J$ and an index $i_0\in[m]$ such that $g_{i_0}(y^{k,j})>0$ for all $j\in J_1$. Apparently, $\lim_{J_1\ni j\to\infty}\min\{L_1^{k,j},\ldots,L_{m}^{k,j}\}=\infty$ follows step 7 of Algorithm \ref{iMBA}. In view of Assumption \ref{ass1} (i) and the descent lemma (see \cite[Lemma 5.7]{Beck17}) to $g_{i_0}$, there exist $\gamma_{i_0}^k\!>0$ and a closed ball $\mathcal{V}(x^k)$ centered at $x^k$ such that 
 \begin{equation}\label{lipgi}
 g_{i_0}(y)\le g_{i_0}(x)+\langle\nabla g_{i_0}(x),y-x\rangle+\frac{\gamma_{i_0}^k}{2}\|y-x\|^2\quad\forall x,y\in\mathcal{V}(x^k).
 \end{equation}
 From the inexact condition \eqref{inexact1} and the expression of $R_{k,j}$, for each $j\in J_1$, it holds
 \begin{align*}
 \frac{\beta_{R}}{2}\|y^{k,j}-x^k\|^2
 &\stackrel{\eqref{inexact1}}{\ge} R_{k,j}(y^{k,j},v^{k,j},\lambda^{k,j})\ge[G_{i_0}(y^{k,j},x^k,L^{k,j})]_{+}\ge G_{i_0}(y^{k,j},x^k,L^{k,j})\nonumber\\
 &\ge g_{i_0}(x^k)-\|\nabla\!g_{i_0}(x^k)\|\|y^{k,j}-x^k\|+\frac{1}{2}L_{i_0}^{k,j}\|y^{k,j}-x^k\|^2,
 \end{align*}
 where $G_{i_0}$ is the $i_0$th component of the mapping $G$. This along with $\lim_{J_1\ni j\to\infty}L_{i_0}^{k,j}=\infty$ implies $\lim_{J_1\ni j\to\infty}\|y^{k,j}-x^k\|=0$. Then, there exists $\overline{j}\in\mathbb{N}$ such that for all $J_1\ni j\ge\overline{j}$, $y^{k,j}\in\mathcal{V}(x^k)$ and $L_{i_0}^{k,j}>\gamma_{i_0}^k+\beta_{R}$. Using \eqref{lipgi} with $y=y^{k,j}$ and $x=x^k$ for $J_1\ni j\ge\overline{j}$ gives
 \begin{align*}
 g_{i_0}(y^{k,j})&\le g_{i_0}(x^k)+\langle\nabla g_{i_0}(x^k),y^{k,j}\!-\!x^k\rangle+\frac{L_{i_0}^{k,j}}{2}\|y^{k,j}\!-\!x^k\|^2+\frac{\gamma_{i_0}^k\!-\!L_{i_0}^{k,j}}{2}\|y^{k,j}\!-\!x^k\|^2\\
 &\le \|(G(y^{k,j},x^k,L^{k,j}))_{+}\|_{\infty}-\frac{\beta_{R}}{2}\|y^{k,j}-x^k\|^2\\
 &\le R_{k,j}(y^{k,j},v^{k,j},\lambda^{k,j})-\frac{\beta_{R}}{2}\|y^{k,j}-x^k\|^2\stackrel{\eqref{inexact1}}{\le}0,
 \end{align*}
 which is a contradiction to the fact that $g_{i_0}(y^{k,j})>0$ for all $j\in J_1$. 
	
 \noindent
 {\bf Case (b)} In this case, $\lim_{J\ni j\to\infty}\mu_{k,j}=\infty$ follows step 8 of Algorithm \ref{iMBA}. In view of Assumption \ref{ass1} (ii) and $x^k\in\Gamma$, there exist $\gamma_{f}^k>0$ and a closed ball $\mathcal{V}(x^k)$ of $x^k$ such that 
 \begin{equation}\label{fLip}
  f(y)\le f(x)+\langle\nabla\!f(x),y-x\rangle+\frac{\gamma_{f}^k}{2}\|y-x\|^2\quad\forall x,y\in\mathcal{V}(x^k).
 \end{equation}
 From the first inequality of \eqref{inexact2} and the expressions of $F_{k,j}$ and $\vartheta_{k,j}$, for each $j\in J$, it holds
 \begin{align}\label{ineq-wdef2}
  \frac{1}{2}\langle y^{k,j}\!-\!x^k,\mathcal{Q}_{k,j}(y^{k,j}\!-\!x^k)\rangle
  &\le \phi(x^k)-\phi(y^{k,j})-\langle\nabla\!f(x^k)+\xi^k,y^{k,j}\!-\!x^k\rangle\\
  &\le-\langle v^k,y^{k,j}\!-\!x^k\rangle-\langle\nabla\!f(x^k)+\xi^k,y^{k,j}\!-\!x^k\rangle\nonumber\\
  &\le\|\nabla\!f(x^k)\!+\!\xi^k\!+\!v^k\|\|y^{k,j}\!-\!x^k\|,\nonumber
 \end{align}
 where the second inequality is by the convexity of $\phi$ and $v^k\in\partial\phi(x^k)$. 
 Since $\mathcal{Q}_{k,j}\succeq\mu_{k,j}\mathcal{I}$ for all $j\in J$ and $\lim_{J\ni j\to\infty}\mu_{k,j}=\infty$, the above inequality implies $\lim_{J\ni j\to\infty}y^{k,j}=x^k$. Then, there exists $\overline{j}\in\mathbb{N}$ such that for all $J\ni j\ge\overline{j}$, $y^{k,j}\in\mathcal{V}(x^k)$ and $\mu_{k,j}>\gamma_{f}^k+\alpha$. Recall that $F(y^{k,j})>F(x^k)-\frac{\alpha}{2}\|y^{k,j}\!-x^k\|^2$ for all $j\in J$. Then, for all $J\ni j\ge\overline{j}$, it holds 
 \begin{align*}
 \frac{\alpha}{2}\|y^{k,j}\!-\!x^k\|^2&>f(x^k)-f(y^{k,j})+\phi(x^k)-\phi(y^{k,j})-\psi(x^k)+\psi(y^{k,j})\\
 &\stackrel{\eqref{fLip}}{\ge}\!-\langle\nabla\!f(x^k),y^{k,j}\!-\!x^k\rangle\!-\!\frac{\gamma_{f}^k}{2}\|y^{k,j}\!-\!x^k\|^2+\phi(x^k)-\phi(y^{k,j})\!-\!\psi(x^k)+\psi(y^{k,j})\\
 &\stackrel{\eqref{psi-cvx}}{\ge}-\langle\nabla\!f(x^k)+\xi^k,y^{k,j}\!-\!x^k\rangle-\frac{\gamma_{f}^k}{2}\|y^{k,j}\!-\!x^k\|^2+\phi(x^k)-\phi(y^{k,j})\\
 &\stackrel{\eqref{ineq-wdef2}}{\ge}\frac{1}{2}\langle y^{k,j}\!-\!x^k,\mathcal{Q}_{k,j}(y^{k,j}\!-\!x^k)\rangle-\frac{\gamma_{f}^k}{2}\|y^{k,j}\!-\!x^k\|^2\ge\frac{\mu_{k,j}-\gamma_{f}^k}{2}\|y^{k,j}\!-\!x^k\|^2,
 \end{align*}
 which is a contradiction to $\mu_{k,j}>\gamma_{f}^k+\alpha$ due to $y^{k,j}\neq x^k$ for all $J\ni j\ge\overline{j}$. 
	
 From the above two cases, we conclude that the inner loop must stop within a finite number of iterations. Note that Algorithm \ref{iMBA} starts with $x^0\in\Gamma$, and the inner loop returns the point $y^{k,j_k}=x^{k+1}\in\Gamma$. Then, the arguments by induction can prove that for each $k\in\mathbb{N}$,  subproblem \eqref{subprobkj} and the inexact conditions \eqref{inexact1}-\eqref{inexact2} are both well-defined. Thus, Algorithm \ref{iMBA} is well-defined with $\{x^k\}_{k\in\mathbb{N}}\subset\Gamma$ and $x^k\in\Gamma_{k,j}$ for each $k,j\in\mathbb{N}$.
 \end{proof}

 Lemma \ref{lemma-welldef} (ii) states the finite termination of the inner loop of Algorithm \ref{iMBA}. In fact, the number of steps can be quantified when $\nabla\!f$ is strictly continuous on the whole space $\mathbb{R}^n$; see Lemma \ref{inner-step}.
 From the iteration steps of the outer loop in Algorithm \ref{iMBA}, 
\begin{equation}\label{F-decrease}
 F(x^{k+1})\le F(x^k)-\frac{\alpha}{2}\|x^{k+1}-x^k\|^2\quad\ \forall k\in\mathbb{N}.
\end{equation}
Then, the lower boundedness of $F$ on $\Gamma$ by Assumption \ref{ass1} (iv) implies the convergence of $\{F(x^k)\}_{k\in\mathbb{N}}$, so $\sum_{k=0}^{\infty}\|x^{k+1}\!-x^k\|^2<\infty$ follows. 
That is, the following result holds.
\begin{corollary}\label{corollary-objF}
 Under Assumption \ref{ass2}, for each $k\in\mathbb{N}$, the above \eqref{F-decrease} holds. Consequently, the sequence $\{F(x^k)\}_{k\in\mathbb{N}}$ converges to some $\varpi^*\in\mathbb{R}$ and $\sum_{k=0}^{\infty}\|x^{k+1}\!-x^k\|^2<\infty$.
\end{corollary}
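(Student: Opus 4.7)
The plan is to observe that Corollary \ref{corollary-objF} is essentially an immediate consequence of the acceptance test in step 6 of Algorithm \ref{iMBA} together with Lemma \ref{lemma-welldef}, and then to extract the two summability-type conclusions from the resulting monotonicity relation using the lower boundedness assumption on $F$.

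First I would establish \eqref{F-decrease}. Lemma \ref{lemma-welldef} guarantees that the inner loop terminates in a finite number of steps, so at the accepted index $j_k$ the two conditions tested in step 6 both hold, namely $g(y^{k,j_k})\le 0$ and $F(y^{k,j_k})\le F(x^k)-\frac{\alpha}{2}\|y^{k,j_k}-x^k\|^2$. Since step 10 sets $x^{k+1}=y^{k,j_k}$, this is exactly \eqref{F-decrease}. Note that Lemma \ref{lemma-welldef} also gives $x^{k+1}\in\Gamma$, so the induction underlying the whole argument stays in $\Gamma$ and Assumption \ref{ass1} (iv) continues to apply at every iterate.

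Next I would deduce the convergence of the objective values. By \eqref{F-decrease}, $\{F(x^k)\}_{k\in\mathbb{N}}$ is nonincreasing, and by Assumption \ref{ass1} (iv) together with $\{x^k\}_{k\in\mathbb{N}}\subset\Gamma$, it is bounded below by ${\rm val}^*>-\infty$. Hence there exists $\varpi^*\in[{\rm val}^*,F(x^0)]\subset\mathbb{R}$ with $F(x^k)\downarrow\varpi^*$.

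Finally, I would telescope \eqref{F-decrease}. Summing from $k=0$ to $N$ yields
\[
\frac{\alpha}{2}\sum_{k=0}^{N}\|x^{k+1}-x^k\|^2\le F(x^0)-F(x^{N+1})\le F(x^0)-{\rm val}^*,
\]
and letting $N\to\infty$ gives $\sum_{k=0}^{\infty}\|x^{k+1}-x^k\|^2<\infty$. There is no real obstacle in this proof; the only subtlety worth flagging is that one must invoke Lemma \ref{lemma-welldef} to know that the acceptance test in step 6 is actually triggered in finitely many inner iterations and that the whole trajectory stays in $\Gamma$ so that the lower bound from Assumption \ref{ass1} (iv) is applicable at every step.
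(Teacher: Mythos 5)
Your proof is correct and follows essentially the same route as the paper: the acceptance test in step 6 (guaranteed to fire by Lemma \ref{lemma-welldef}) together with step 10 yields \eqref{F-decrease}, and the lower boundedness of $F$ on $\Gamma$ from Assumption \ref{ass1} (iv) then gives convergence of $\{F(x^k)\}_{k\in\mathbb{N}}$ and, by telescoping, the square summability of $\{\|x^{k+1}-x^k\|\}_{k\in\mathbb{N}}$.
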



\section{Convergence analysis}\label{sec4}

This section is dedicated to the convergence analysis of Algorithm \ref{iMBA} under Assumption \ref{ass3}, a common one for analyzing convergence of iterate sequences. This assumption holds automatically when $F$ is level-coercive relative to the set $\Gamma$ (see \cite[Definition 3.25]{RW98}) due to the inclusion $\{x^k\}_{k\in\mathbb{N}}\subset\mathcal{L}_{F(x^0)}\!:=\{x\in\Gamma\,|\,F(x)\le F(x^0)\}$ implied by \eqref{F-decrease}. 
\begin{assumption}\label{ass3}
$\{x^k\}_{k\in\mathbb{N}}$ is bounded, and denote the set of its cluster points as $\omega(x^0)$.
\end{assumption} 

Under Assumption \ref{ass3}, we can obtain the following two technical lemmas. Lemma \ref{lemma-xbark} states the sufficient closeness of the iterate sequence $\{x^k\}_{k\in\mathbb{N}}$ to the sequence $\{\overline{x}^k\}_{k\in\mathbb{N}}$, while Lemma \ref{lemma-bound} establishes the boundedness of the sequence $\{(\xi^k,\overline{x}^k,L^k,\mathcal{Q}_k)\}_{k\in\mathbb{N}}$. These two lemmas are often used in the subsequent analysis of this section. 
\begin{lemma}\label{lemma-xbark}
 Under Assumptions \ref{ass2}-\ref{ass3}, the set $\omega(x^0)$ is nonempty and compact, and  \begin{equation*}
 \lim_{k\to\infty}\|x^k-\overline{x}^k\|=0\ \ {\rm and}\ \ \lim_{k\to\infty}\|x^{k+1}-\overline{x}^k\|=0.
 \end{equation*}
\end{lemma}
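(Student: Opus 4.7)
My plan is to establish the three claims in order, using $\|x^{k+1}-x^k\|\to 0$ from Corollary \ref{corollary-objF} as the workhorse. Compactness of $\omega(x^0)$ is essentially free: Assumption \ref{ass3} supplies boundedness, so the Bolzano--Weierstrass theorem gives nonemptiness, the set of cluster points of any sequence is closed, and boundedness of $\{x^k\}_{k\in\mathbb{N}}$ confines $\omega(x^0)$ to a compact set. For the two limit assertions, the triangle inequality $\|x^k-\overline{x}^k\|\le\|x^k-x^{k+1}\|+\|x^{k+1}-\overline{x}^k\|$ together with $\|x^{k+1}-x^k\|\to 0$ reduces everything to showing $\|x^{k+1}-\overline{x}^k\|\to 0$.

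The central computation will be a strong-convexity bound at $\overline{x}^k$. The function $F_k$ is $\mu_k$-strongly convex with $\mu_k\ge\mu_{\min}>0$, and under Assumption \ref{ass2} the KKT machinery in the proof of Lemma \ref{lemma-welldef}(i) yields $\overline{v}^k\in\partial\phi(\overline{x}^k)$ and $\overline{\lambda}^k\in\mathbb{R}^m_+$ with $\nabla\vartheta_k(\overline{x}^k)+\overline{v}^k=-\nabla_{\!x}G(\overline{x}^k,x^k,L^k)\overline{\lambda}^k$ and $\langle\overline{\lambda}^k,G(\overline{x}^k,x^k,L^k)\rangle=0$. Applying the subgradient form of strong convexity of $F_k$ at $\overline{x}^k$ to the test point $x^{k+1}$, substituting the KKT identity, and using the convexity of each $G_i(\cdot,x^k,L^k)$ (each is a quadratic with positive Hessian $L_i^k>0$) together with complementary slackness, I expect to land on
\begin{equation*}
F_k(x^{k+1})-F_k(\overline{x}^k)\ge\frac{\mu_k}{2}\|x^{k+1}-\overline{x}^k\|^2-\|\overline{\lambda}^k\|_1\,\|[G(x^{k+1},x^k,L^k)]_+\|_{\infty}.
\end{equation*}
Then the two inexactness conditions directly bound the remaining pieces: \eqref{inexact2} (with $y^{k,j_k}=x^{k+1}$) gives $F_k(x^{k+1})-F_k(\overline{x}^k)\le\frac{\beta_F}{2}\|x^{k+1}-x^k\|^2$, while $R_{k,j_k}\le\frac{\beta_R}{2}\|x^{k+1}-x^k\|^2$ in \eqref{inexact1} controls the constraint-violation term. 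Rearranging yields
\begin{equation*}
\|x^{k+1}-\overline{x}^k\|^2\le\frac{\beta_F+\beta_R\|\overline{\lambda}^k\|_1}{\mu_{\min}}\,\|x^{k+1}-x^k\|^2.
\end{equation*}

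The main obstacle is then the uniform bound on $\{\|\overline{\lambda}^k\|_1\}_{k\in\mathbb{N}}$; once this is in hand, $\|x^{k+1}-x^k\|\to 0$ finishes the proof. I would try to extract this bound from the forthcoming Lemma \ref{lemma-bound}, which secures the boundedness of $(\xi^k,\overline{x}^k,L^k,\mathcal{Q}_k)$: coupled with Assumption \ref{ass2} and the continuity of $\nabla\vartheta_k$ on the bounded data, a minimal-norm multiplier can be selected whose size is controlled uniformly. Should a uniform subregularity modulus not be directly available at this stage, my fallback is a subsequential contradiction: negate $\|x^{k+1}-\overline{x}^k\|\to 0$, extract convergent subsequences of $(x^k,\overline{x}^k,L^k,\mathcal{Q}_k,\xi^k)$ via Assumption \ref{ass3} and Lemma \ref{lemma-bound}, and form the pointwise limit $F^*$ of the strongly convex subproblems on the limiting feasible set $\Gamma^*$; the inequalities $F_k(\overline{x}^k)\le F_k(x^k)$ (since $x^k\in\Gamma_k$) and $F_k(x^{k+1})-F_k(\overline{x}^k)\to 0$ (from \eqref{inexact2}), passed to the limit along with uniqueness from strong convexity, would force $x^*=\overline{x}^*$ and contradict the hypothesized gap.
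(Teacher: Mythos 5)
There is a genuine gap in your main route. Your central inequality
\begin{equation*}
\|x^{k+1}-\overline{x}^k\|^2\le\frac{\beta_F+\beta_R\|\overline{\lambda}^k\|_1}{\mu_{\min}}\,\|x^{k+1}-x^k\|^2
\end{equation*}
is correct as a conditional statement, but it hinges on a \emph{uniform} bound for $\{\|\overline{\lambda}^k\|_1\}_{k\in\mathbb{N}}$, and no such bound is available under Assumptions \ref{ass2}--\ref{ass3} alone. Lemma \ref{lemma-bound} does not bound multipliers at all (it bounds $\overline{x}^k,\xi^k,L^k,\mathcal{Q}_k$), and subregularity of each $\mathcal{G}_{k,j}$ at $(\overline{x}^{k,j},0)$ only guarantees that multipliers exist, with no uniform modulus across $k$. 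In the paper, a bounded multiplier selection $\{\overline{\lambda}^k\}$ is obtained only in Proposition \ref{prop-lambark}, which needs the extra Assumption \ref{ass4} (partial BMP); indeed, what you are trying to prove is essentially Proposition \ref{prop-eboundk}, which the paper establishes only under Assumptions \ref{ass2}--\ref{ass4}, whereas Lemma \ref{lemma-xbark} must hold without Assumption \ref{ass4}. There is also a circularity problem: Lemma \ref{lemma-bound}(i) (boundedness of $\{\overline{x}^k\}$) is itself proved in the paper via Lemma \ref{lemma-xbark}, so you cannot simply invoke "the forthcoming Lemma \ref{lemma-bound}" here. Your fallback compactness argument avoids multipliers but is also incomplete as sketched: to invoke uniqueness of the minimizer of the limit problem you must show that $\overline{x}^*$ minimizes $F^*$ over $\Gamma^*$, i.e., pass optimality through the varying constraint sets $\Gamma_k\to\Gamma^*$, a stability step you do not supply (and which again tends to require a CQ or multiplier control).

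The missing idea is to run the strong-convexity/variational-inequality estimate at $\overline{x}^k$ against the \emph{feasible} test point $x^k\in\Gamma_k$ rather than the (possibly $\Gamma_k$-infeasible) point $x^{k+1}$; then $-\nabla\vartheta_k(\overline{x}^k)-\overline{v}^k\in\mathcal{N}_{\Gamma_k}(\overline{x}^k)$ gives $\langle\nabla\vartheta_k(\overline{x}^k)+\overline{v}^k,\overline{x}^k-x^k\rangle\le 0$ with no multiplier and no constraint-violation term, yielding $F_k(x^k)-F_k(\overline{x}^k)\ge\frac{\mu_{\min}}{2}\|x^k-\overline{x}^k\|^2$. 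Combining this with the second inequality in \eqref{inexact2}, the convexity bound $F_k(x^k)-F_k(x^{k+1})\le\max_{\zeta\in\partial F_k(x^k)}\|\zeta\|\,\|x^{k+1}-x^k\|$, and the boundedness of $\bigcup_k\partial F_k(x^k)$ (from Assumption \ref{ass3} and local boundedness of $\partial\phi$, $\partial(-\psi)$) gives $\|x^k-\overline{x}^k\|\to 0$ directly; the second limit then follows from Corollary \ref{corollary-objF} and the triangle inequality. In other words, prove $\|x^k-\overline{x}^k\|\to 0$ first and deduce $\|x^{k+1}-\overline{x}^k\|\to 0$, not the reverse order you propose — this is exactly how the paper sidesteps the multiplier issue that blocks your argument.
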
  
\begin{proof}
 It suffices to prove the first limit because the second follows the first one and Corollary \ref{corollary-objF}. For each $k\in\mathbb{N}$, in view of the optimality of $\overline{x}^k$ to \eqref{subprobkj}, there exists $\overline{v}^k\in\partial\phi(\overline{x}^k)$ such that $-\nabla\vartheta_k(\overline{x}^k)-\overline{v}^k\in\mathcal{N}_{\Gamma_k}(\overline{x}^k)$. Along with $x^k\!\in\Gamma_k$ by Lemma \ref{lemma-welldef} and the convexity of $\Gamma_k$, 
 \begin{equation*}
  0\ge \langle\nabla\vartheta_k(\overline{x}^k)+\overline{v}^k,\overline{x}^k-x^k\rangle=\langle\nabla\!f(x^k)+\xi^k+\overline{v}^k+\mathcal{Q}_k(\overline{x}^k\!-\!x^k),\overline{x}^k\!-\!x^k\rangle.
 \end{equation*}
 Combining this inequality with the expression of $F_k$ and $\overline{v}^k\in\partial\phi(\overline{x}^k)$ leads to 
 \begin{align*}
  F_k(x^k)-F_k(\overline{x}^k)&=\phi(x^k)-\langle\nabla\!f(x^k)+\xi^k,\overline{x}^k\!-\!x^k\rangle-\frac{1}{2}\langle\overline{x}^k\!-\!x^k,\mathcal{Q}_k(\overline{x}^k\!-\!x^k)\rangle-\phi(\overline{x}^k)\\
  &\ge\frac{1}{2}\langle\overline{x}^k-x^k,\mathcal{Q}_k(\overline{x}^k-x^k)\rangle+\phi(x^k)-\phi(\overline{x}^k)+\langle\overline{v}^k,\overline{x}^k-x^k\rangle\\
  &\ge \frac{1}{2}\langle\overline{x}^k-x^k,\mathcal{Q}_k(\overline{x}^k-x^k)\rangle\ge\frac{\mu_{\min}}{2}\|\overline{x}^k-x^k\|^2\quad\forall k\in\mathbb{N}.
 \end{align*}
 From $F_k(x^{k+1})-F_k(\overline{x}^k)\le\frac{\beta_{F}}{2}\|x^{k+1}-x^k\|^2$ by the second inequality of \eqref{inexact2} for $j=j_k$, 
 \begin{align*}
  \frac{\mu_{\min}}{2}\|\overline{x}^k-x^k\|^2&\le F_k(x^k)-F_k(x^{k+1})+\frac{\beta_{F}}{2}\|x^{k+1}-x^k\|^2\\
  &\le \max_{\zeta\in\partial F_k(x^k)}\|\zeta\|\|x^{k+1}-x^k\|+\frac{\beta_{F}}{2}\|x^{k+1}-x^k\|^2,
 \end{align*} 
 where the second inequality is due to the convexity of $F_k$. Note that $\partial F_k(x^k)=\nabla\!f(x^k)+\xi^k+\partial\phi(x^k)$ with $\xi^k\in\partial(-\psi)(x^k)$. Under Assumption \ref{ass3}, the local boundedness of $\partial(-\psi)$ and $\partial\phi$ (see Remark \ref{remark-subdiff}) and \cite[Proposition 5.15]{RW98} imply that the sets $\bigcup_{j\in\mathbb{N}}\partial(-\psi)(x^j)$ and $\bigcup_{j\in\mathbb{N}}\partial\phi(x^j)$ are bounded, so is the set $\bigcup_{j\in\mathbb{N}}\partial F_j(x^j)$. Then, from the above inequality and $\lim_{k\to\infty}\|x^{k+1}-x^k\|=0$ by Corollary \ref{corollary-objF}, we obtain $\lim_{k\to\infty}\|\overline{x}^{k}-x^k\|=0$.   
\end{proof}
\begin{remark}\label{bound-subdiff}
 Under Assumption \ref{ass3}, the sets $\bigcup_{j\in\mathbb{N}}\partial(-\psi)(x^j)$ and $ \bigcup_{j\in\mathbb{N}}\partial\phi(x^j)$ are both bounded by the proof of Lemma \ref{lemma-xbark}. From the first limit in Lemma \ref{lemma-xbark} and Remark \ref{remark-subdiff}, so is $\bigcup_{j\in\mathbb{N}}\partial F_j(\overline{x}^j)$.
\end{remark}
\begin{lemma}\label{lemma-bound}
 Under Assumptions \ref{ass2}-\ref{ass3}, the following three statements hold true. 
 \begin{description}
 \item[(i)] The sequence $\{(\overline{x}^k,\xi^k)\}_{k\in\mathbb{N}}$ is bounded.

 \item[(ii)] There exists $\beta_{L}>0$ such that $\|L^k\|\le\beta_{L}$ for all $k\in\mathbb{N}$.
		
 \item[(iii)] $\{\mu_k\}_{k\in\mathbb{N}}$ is bounded, so there exists $\beta_{\mathcal{Q}}>0$ such that $\|\mathcal{Q}_k\|\!\le\beta_{\mathcal{Q}}$ for all $k\in\mathbb{N}$. 
\end{description}
\end{lemma}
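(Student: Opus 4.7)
The plan is to handle the three parts in turn, with (i) being an immediate consequence of results already proved and (ii)--(iii) requiring a uniform (over $k$) version of the local estimates used inside the proof of Lemma \ref{lemma-welldef}. Part (i) follows at once: since $\xi^k\in\partial(-\psi)(x^k)$ and $\{x^k\}_{k\in\mathbb{N}}$ is bounded by Assumption \ref{ass3}, the local boundedness of $\partial(-\psi)$ (Remark \ref{remark-subdiff}) together with \cite[Proposition 5.15]{RW98} yields boundedness of $\{\xi^k\}$, as already recorded in Remark \ref{bound-subdiff}; boundedness of $\{\overline{x}^k\}$ is immediate from the first limit of Lemma \ref{lemma-xbark} combined with boundedness of $\{x^k\}$.

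For part (ii), I would first embed $\{x^k\}_{k\in\mathbb{N}}$ into a compact set $K$ and enlarge it to $K'=K+\delta\mathbb{B}$ for some fixed $\delta>0$. By Assumption \ref{ass1}(i), each $\nabla g_i$ is Lipschitz on $K'$ with a uniform constant $\Gamma_i>0$. The heart of the argument is to produce a threshold $\overline{L}>0$ (independent of $k$ and $j$) such that $L_i^{k,j}\ge\overline{L}$ forces $g_i(y^{k,j})\le 0$. This is the Case (a) calculation of Lemma \ref{lemma-welldef} carried out with $\gamma_{i_0}^k$ replaced by $\Gamma_i$: the inexactness condition \eqref{inexact1} yields a quadratic in $t=\|y^{k,j}-x^k\|$ whose right-hand side shrinks as $L_i^{k,j}$ grows, and the residual ingredients $\|\nabla g_i(x^k)\|$ and $|g_i(x^k)|$ are uniformly bounded under Assumption \ref{ass3}, placing $y^{k,j}$ into $K'$ uniformly in $k$. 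Consequently step 7 of Algorithm \ref{iMBA} can fire only a fixed number of times per outer iteration, and since $L^{k,0}\le L_{\max}$ we obtain the uniform bound $\|L^k\|\le\beta_L:=\tau\sqrt{m}\max\{L_{\max},\max_{i\in[m]}\overline{L}\}$.

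Part (iii) proceeds analogously with $\nabla\!f$ in place of $\nabla g_i$: Assumption \ref{ass1}(ii) provides a uniform Lipschitz constant $\gamma_f>0$ for $\nabla\!f$ on $K'$. The Case (b) computation of Lemma \ref{lemma-welldef} is repeated with $\gamma_f^k$ replaced by $\gamma_f$; inequality \eqref{ineq-wdef2} together with $\mathcal{Q}_{k,j}\succeq\mu_{k,j}\mathcal{I}$ and the uniform boundedness over $k$ of $\{\nabla\!f(x^k)+\xi^k\}$ and of the chosen $v^k\in\partial\phi(x^k)$ (via Remark \ref{bound-subdiff}) shows that $\|y^{k,j}-x^k\|\to 0$ uniformly in $k$ as $\mu_{k,j}\to\infty$, so $y^{k,j}\in K'$ eventually. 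The sufficient-decrease test in step 6 is then guaranteed once $\mu_{k,j}>\gamma_f+\alpha$, giving a uniform upper bound $\mu_k\le\tau\max\{\mu_{\max},\gamma_f+\alpha\}$; the bound on $\|\mathcal{Q}_k\|$ then follows from $\mathcal{Q}_k\preceq(\mu_k+M)\mathcal{I}$. The main obstacle is precisely this uniformization step: in Lemma \ref{lemma-welldef} the neighborhoods $\mathcal{V}(x^k)$ and constants $\gamma_{i_0}^k,\gamma_f^k$ are allowed to depend on $k$, whereas here one must exhibit a single compact $K'$ and constants $\Gamma_i,\gamma_f$ that work for every $k$, and the implications ``$L_i^{k,j}\ge\overline{L}\Rightarrow y^{k,j}\in K'$'' and ``$\mu_{k,j}>\gamma_f+\alpha\Rightarrow y^{k,j}\in K'$'' must be made uniform in $k$; all the requisite uniform bounds ultimately rest on Assumption \ref{ass3} together with Remark \ref{bound-subdiff}.
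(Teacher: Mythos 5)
Your proof is correct, but it follows a genuinely different route from the paper's. The paper proves (ii) and (iii) by contradiction: assuming $\{L^k\}$ (resp.\ $\{\mu_k\}$) is unbounded, it extracts a subsequence along which the pre-update values $\widehat L^k_{i_0}=L^k_{i_0}/\tau$ (resp.\ $\widehat\mu_k=\mu_k/\tau$) blow up while the corresponding trial point $y^{k,\widehat j_k}$ failed the test in step 6, shows $\|y^{k,\widehat j_k}-x^k\|\to 0$ so that everything converges to a single cluster point $x^*$, and then invokes the mean-value theorem together with the strict continuity of $\nabla g_{i_0}$ (resp.\ $\nabla\!f$) \emph{at that one point} to contradict the blow-up; no uniform Lipschitz constant over the whole iterate sequence is ever needed. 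You instead give a direct, constructive argument: you uniformize the inner-loop analysis (essentially Lemma~\ref{inner-step} of the appendix) over $k$ by working on a single compact convex set $K'\supset\{x^k\}$ contained in $\mathcal{O}$, obtaining $k$-independent Lipschitz constants and hence uniform thresholds beyond which steps 7 and 8 cannot fire, which bounds the number of backtracks and thus $L^k$, $\mu_k$. Your route buys explicit bounds (in the spirit of the constants appearing in Remark~\ref{remark-complexity}), at the price of the extra bookkeeping needed to make the implications ``$L_i^{k,j}$ large $\Rightarrow y^{k,j}\in K'$'' and ``$\mu_{k,j}$ large $\Rightarrow y^{k,j}\in K'$'' uniform in $k$, which you correctly identify and which does follow from \eqref{inexact1}, \eqref{inexact2} and Remark~\ref{bound-subdiff}. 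Two small corrections to your explicit constants (they do not affect the lemma, which only asserts existence of bounds): since step 7 rescales \emph{all} components of $L$ simultaneously, a component initialized near $L_{\max}$ can be dragged above $\tau\max\{L_{\max},\overline L\}$ by firings caused by a component initialized near $L_{\min}$, so the correct uniform bound carries a factor $L_{\max}/L_{\min}$ (equivalently, bound the number of firings by $\lceil\log_\tau(\overline L/L_{\min})\rceil$ and use $\|L^{k,0}\|_\infty\le L_{\max}$); likewise the $\mu$-threshold must be $\max\{\gamma_f+\alpha,\mu'\}$, where $\mu'$ is the level that forces $\|y^{k,j}-x^k\|\le\delta$ so that the Lipschitz constant $\gamma_f$ on $K'$ is applicable, not $\gamma_f+\alpha$ alone.
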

\begin{proof} 
 {\bf (i)-(ii)} Item (i) directly follows Assumption \ref{ass3}, Lemma \ref{lemma-xbark} and Remark \ref{bound-subdiff}. We next prove item (ii). Suppose on the contrary that $\{L^k\}_{k\in\mathbb{N}}$ is unbounded. For each $k\in\mathbb{N}$, let $\widehat{L}^k:=L^k/\tau$. Then, $\lim_{ k\to\infty}\max\{\widehat{L}_1^k,\ldots,\widehat{L}_m^k\}=\infty$, and there exist an infinite index set $\mathcal{K}\subset\mathbb{N}$ and an index $i_0\in[m]$ such that $\lim_{\mathcal{K}\ni k\to\infty}\widehat{L}_{i_0}^k=\infty$. From steps 6-8 of Algorithm \ref{iMBA}, there exist an infinite index set $\mathcal{K}_1\subset\mathcal{K}$ and an index $\overline{k}\in\mathbb{N}$ such that for each $\mathcal{K}_1\ni k\ge\overline{k}$, $\widehat{L}_{i_0}^k=L_{i_0}^{k,\widehat{j}_k}$ and $g_{i_0}(y^{k,\widehat{j}_k})>0$ for some $1<\widehat{j}_k<j_k$. Then, for each $\mathcal{K}_1\ni k\ge\overline{k}$, 
 \begin{align}\label{ineq-Lbound1}
 0&\stackrel{\eqref{inexact1}}{\ge}\|[G(y^{k,\widehat{j}_k},x^k,\widehat{L}^k)]_{+}\|_{\infty}-({\beta_{R}}/{2})\|y^{k,\widehat{j}_k}-x^k\|^2\nonumber\\
 &\ge g_{i_0}(x^k)+\langle\nabla\!g_{i_0}(x^k),y^{k,\widehat{j}_k}-x^k\rangle+\frac{1}{2}(\widehat{L}^k_{i_0}\!-\!\beta_{R})\|y^{k,\widehat{j}_k}-x^k\|^2.
 \end{align}
 For each $\mathcal{K}_1\ni k\ge\overline{k}$, from the mean-value theorem, there exists $z^k$ on the line segment joining $x^k$ and $y^{k,\widehat{j}_k}$ such that    
 $g_{i_0}(x^k)-g_{i_0}(y^{k,\widehat{j}_k})=\langle\nabla\!g_{i_0}(z^k),x^k\!-\!y^{k,\widehat{j}_k}\rangle$. Along with the above inequality and $g_{i_0}(y^{k,\widehat{j}_k})>0$, it follows that for each $\mathcal{K}_1\ni k\ge\overline{k}$,
\begin{align}\label{temp-gineq}
0&>g_{i_0}(x^k)-g_{i_0}(y^{k,\widehat{j}_k})+\langle\nabla\!g_{i_0}(x^k),y^{k,\widehat{j}_k}-x^k\rangle+\frac{\widehat{L}^k_{i_0}-\beta_{R}}{2}\|y^{k,\widehat{j}_k}-x^k\|^2\\
 &=\langle\nabla\!g_{i_0}(x^k)-\nabla\!g_{i_0}(z^k),y^{k,\widehat{j}_k}-x^k\rangle+\frac{1}{2}(\widehat{L}^k_{i_0}-\beta_{R})\|y^{k,\widehat{j}_k}-x^k\|^2\nonumber\\
 &\ge -\|\nabla\!g_{i_0}(x^k)-\nabla\!g_{i_0}(z^k)\|\|y^{k,\widehat{j}_k}-x^k\|+\frac{1}{2}(\widehat{L}^k_{i_0}-\beta_{R})\|y^{k,\widehat{j}_k}-x^k\|^2.\nonumber
 \end{align}
 From the above \eqref{ineq-Lbound1} and $\lim_{\mathcal{K}_1\ni k\to\infty}\widehat{L}^k_{i_0}=\infty$, we deduce from the boundedness of $\{x^k\}_{k\in\mathbb{N}}$ that $\lim_{\mathcal{K}_1\ni k\to\infty}\|y^{k,\widehat{j}_k}-x^k\|=0$.
 Since the sequence $\{x^k\}_{k\in\mathcal{K}_1}$ is bounded, if necessary by taking a subsequence, we can assume $\lim_{\mathcal{K}_1\ni k\to\infty}x^k=x^*\in\omega(x^0)$. Then, $\lim_{\mathcal{K}_1\ni k\to\infty}z^k=\lim_{\mathcal{K}_1\ni k\to\infty}y^{k,\widehat{j}_k}=x^*$. In view of the strict continuity of $\nabla g_{i_0}$ at $x^*$, there exists $\gamma_{i_0}>0$ such that $\|\nabla\!g_{i_0}(x^k)-\nabla\!g_{i_0}(z^k)\|\le \gamma_{i_0}\|y^{k,\widehat{j}_k}-x^k\|$ for all $\mathcal{K}_1\ni k\ge\overline{k}$ (if necessary by increasing $\overline{k}$), which along with the above inequality implies that for all $\mathcal{K}_1\ni k\ge\overline{k}$,
 \[
  \widehat{L}^k_{i_0}\|y^{k,\widehat{j}_k}-x^k\|^2<(\beta_{R}+2\gamma_{i_0})\|y^{k,\widehat{j}_k}-x^k\|^2.
 \]
 This contradicts $\lim_{\mathcal{K}_1\ni k\to\infty}\widehat{L}^k_{i_0}=\infty$ because $\|y^{k,\widehat{j}_k}\!-\!x^k\|\ne 0$ for all $\mathcal{K}_1\ni k\ge\overline{k}$ by \eqref{temp-gineq}. 
 
 \noindent
 {\bf(iii)} Suppose on the contrary that $\{\mu_k\}_{k\in\mathbb{N}}$ is unbounded. For each $k\in\mathbb{N}$, let $\widehat{\mu}_k:=\tau^{-1}\mu_k$. From step 8 of Algorithm \ref{iMBA}, there exists an index $\overline{k}\in\mathbb{N}$ such that for each $k\ge\overline{k}$, 
 \begin{equation}\label{ineq-mubound0}
 \widehat{\mu}_k=\mu_{k,\widehat{j}_k}\ {\rm and}\  F(y^{k,\widehat{j}_k})>F(x^k)-\frac{\alpha}{2}\|y^{k,\widehat{j}_k}-x^k\|^2\quad{\rm for\ some}\ 1<\widehat{j}_k<j_k.
 \end{equation}
 For each $k\ge\overline{k}$, using the first inequality of \eqref{inexact2} for $j=\widehat{j}_k$ and $\mathcal{Q}_{k,\widehat{j}_k}\succeq\widehat{\mu}_k\mathcal{I}$ leads to 
 \begin{align*}
 \frac{1}{2}\widehat{\mu}_k\|y^{k,\widehat{j}_k}\!-\!x^k\|^2&\le\frac{1}{2}\langle  y^{k,\widehat{j}_k}\!-\!x^k,\mathcal{Q}_{k,\widehat{j}_k}(y^{k,\widehat{j}_k}\!-\!x^k)\rangle\nonumber\\
 &\le\langle\xi^k+\nabla\!f(x^k),x^k-y^{k,\widehat{j}_k}\rangle+\phi(x^k)-\phi(y^{k,\widehat{j}_k})\\
 &\le\langle\xi^k+\nabla\!f(x^k)-v^k,x^k-y^{k,\widehat{j}_k}\rangle\ \ {\rm with}\ v^k\in\partial\phi(x^k),
 \end{align*}
 where the third inequality is due to the convexity of $\phi$. Note that $\{\xi^k\!+\!\nabla\!f(x^k)-v^k\}_{k\in\mathbb{N}}$ is bounded due to Remark \ref{bound-subdiff}, and $y^{k,\widehat{j}_k}\ne x^k$ for each $k\ge\overline{k}$ by \eqref{ineq-mubound0}. The above inequality, along with $\lim_{k\to\infty}\widehat{\mu}_k=\infty$, implies $\lim_{k\to\infty}\|y^{k,\widehat{j}_k}\!-\!x^k\|=0$. By Assumption \ref{ass1} (ii) and the mean-value theorem, for each $k\ge\overline{k}$, there exists $z^k$ on the line segment joining $x^k$ and $y^{k,\widehat{j}_k}$ such that $f(x^k)-f(y^{k,\widehat{j}_k})= \langle \nabla\!f(z^k),x^k\!-\!y^{k,\widehat{j}_k}\rangle$. 	Together with the inequality in  \eqref{ineq-mubound0}, for each $k\ge\overline{k}$ (if necessary by increasing $\overline{k}$), it holds
 \begin{align}\label{ineq-mubound2}
 \frac{\alpha}{2}\|y^{k,\widehat{j}_k}-x^k\|^2
  &>\langle \nabla\!f(z^k),x^k\!-\!y^{k,\widehat{j}_k}\rangle-\psi(x^k)+\psi(y^{k,\widehat{j}_k})+\phi(x^k)-\phi(y^{k,\widehat{j}_k})\nonumber\\
  &\stackrel{\eqref{psi-cvx}}{\ge}\langle \nabla\!f(z^k),x^k\!-\!y^{k,\widehat{j}_k}\rangle+\langle\xi^k,x^k\!-\!y^{k,\widehat{j}_k}\rangle+\phi(x^k)-\phi(y^{k,\widehat{j}_k})\\
  &\stackrel{\eqref{ineq-mubound2}}{\ge}\langle \nabla\!f(z^k)-\nabla f(x^k),x^k\!-\!y^{k,\widehat{j}_k}\rangle+\frac{\widehat{\mu}_k}{2}\|y^{k,\widehat{j}_k}\!-\!x^k\|^2.\nonumber
  \end{align}
 From the boundedness of $\{x^k\}_{k\in\mathbb{N}}$, if necessary by taking a subsequence, we assume that $\lim_{k\to\infty}x^k=x^*\in\omega(x^0)$. Recalling that $\lim_{k\to\infty}\|y^{k,\widehat{j}_k}\!-\!x^k\|=0$, we have $\lim_{k\to\infty}y^{k,\widehat{j}_k}=x^*=\lim_{k\to\infty}z^k$. In view of Assumption \ref{ass1} (ii), there exists $\gamma_{\!f}>0$ such that for all $k\ge\overline{k}$ (if necessary by increasing $\overline{k}$), $\|\nabla\!f(z^k)-\nabla\!f(x^k)\|\le \gamma_{f}\|y^{k,\widehat{j}_k}-x^k\|$. Then, from the above inequality, we have $\frac{\widehat{\mu}_k}{2}\|y^{k,\widehat{j}_k}\!-\!x^k\|^2\le \frac{\alpha+2\gamma_{f}}{2}\|y^{k,\widehat{j}_k}\!-\!x^k\|^2$ for each $k\ge\overline{k}$.
 This is impossible because $\lim_{k\to\infty}\widehat{\mu}_k=\infty$ and  $y^{k,\widehat{j}_k}\neq x^k$ for all $k\ge\overline{k}$ by \eqref{ineq-mubound0}. Thus, $\{\mu_k\}_{k\in\mathbb{N}}$ is bounded, so is $\{\mathcal{Q}_k\}_{k\in\mathbb{N}}$ by recalling that $\mu_k\mathcal{I}\preceq \mathcal{Q}_k\preceq(\mu_k\!+\!M)\mathcal{I}$. 
\end{proof} 
\subsection{Subsequential convergence}\label{sec4.1}

Let $\mathbb{U}:=\mathbb{R}_{+}^m\times\mathbb{S}\times\mathbb{R}^n\times\mathbb{R}^n$, where $\mathbb{S}$ denotes the set of all PD linear mappings from $\mathbb{R}^n$ to $\mathbb{R}^n$. For any $(u,x)\in\mathbb{U}\times\mathbb{R}^n$ with $u=(L,\mathcal{Q},\xi,s)$, define the functions 
\begin{subnumcases}{}\label{theta-def1}
\theta(u,x):=f(s)+\langle\nabla\!f(s),x-s\rangle+\frac{1}{2}\langle x\!-\!s,\mathcal{Q}(x\!-\!s)\rangle-\psi(s)+\langle\xi,x\!-\!s\rangle,\\
\label{h-def1}
 H(u,x):=G(x,s,L).    
\end{subnumcases} For each $k\in\mathbb{N}$, let $u^k\!:=(L^k,\mathcal{Q}_k,\xi^k,x^k)\in\mathbb{U}$. By Lemma \ref{lemma-bound}, under Assumption \ref{ass3}, the sequence $\{(u^k,\overline{x}^{k})\}_{k\in\mathbb{N}}$ is bounded, so its cluster point set, denoted by $\Omega^*$, is nonempty and compact. Furthermore, every $(u^*,\overline{x}^*)\in \Omega^*$ is of the form $(L^*,\mathcal{Q}^*,\xi^*,x^*,\overline{x}^*)$ with $\overline{x}^*=x^*$ by Lemma \ref{lemma-xbark}. 
This section aims at proving that every accumulation point of $\{x^k\}_{k\in\mathbb{N}}$ is a strong stationary point of problem \eqref{prob} under the following assumption.
\begin{assumption}\label{ass4}
 The constraint system of \eqref{para-prob} with $\theta$ and $H$ defined in \eqref{theta-def1}-\eqref{h-def1} satisfies the partial BMP w.r.t. $x$ at all $(u^*,\overline{x}^*)\in \Omega^*$, where $\Omega^*$ is the cluster point set of $\{(u^k,\overline{x}^{k})\}_{k\in\mathbb{N}}$.    
\end{assumption}
\begin{remark}\label{remark-ass4}
 According to \cite[Theorem 3.3 $\&$ Corollary 3.7]{Gfrerer17}, Assumption \ref{ass4} holds if for every $(u^*,\overline{x}^*)\in \Omega^*$ the mapping $\mathcal{H}_{u^*}(\cdot)\!:=H(u^*,\cdot)-\mathbb{R}_{-}^m=G(\cdot,\overline{x}^*,L^*)-\mathbb{R}_{-}^m$ is metrically regular at $(\overline{x}^*,0)$. The latter is weaker than the Slater's CQ for the constraint system $G(\cdot,\overline{x}^*,L^*)\in\mathbb{R}_{-}^m$ by \cite[Proposition 2.104 \& 2.106]{BS00}. Moreover, following the proof of \cite[Proposition 2.1]{Auslender10}, we can prove that the Slater's CQ for the constraint system $G(\cdot,\overline{x}^*,L^*)\in\mathbb{R}_{-}^m$ is implied by the MFCQ for the constraint system $g(x)\in\mathbb{R}_{-}^m$ at $\overline{x}^*$. Thus, Assumption \ref{ass4} is weaker than the MFCQ for the system $g(x)\in\mathbb{R}_{-}^m$ at all feasible points, a common CQ to analyze the convergence or iteration complexity of MBA-like methods (see, e.g., \cite[Assumption A3]{Auslender10}, \cite[Assumption 2.3]{YuPongLv21}, \cite[Assumption 3]{Boob24} and \cite[Assumption 3]{Nabou24}). In addition, from \cite[Proposition 3.2]{Gfrerer17}, Assumption \ref{ass4} is also implied by the partial constant rank constraint qualification w.r.t. $x$ for the constraint system $G(x,s,L)\in\mathbb{R}_{-}^m$ at all $(u^*,\overline{x}^*)\in \Omega^*$, which is known to be independent of the MFCQ of this system at the corresponding reference point. 
\end{remark}

From \eqref{theta-def1}-\eqref{h-def1}, we have
$\vartheta_k(\cdot)\!=\theta(u^k,\cdot)$ and $G(\cdot,x^k,L^k)\!=H(u^k,\cdot)$ for all $k\in\mathbb{N}$. Then, subproblem \eqref{subprob} is precisely the problem \eqref{para-prob} associated with $u=u^k$, and its multiplier set at $\overline{x}^k$ is $\mathcal{M}(u^k,\overline{x}^k)$, where $\mathcal{M}$ is the mapping defined in \eqref{para-multiplier}. These facts will be often used in the subsequent analysis of this subsection. 
\begin{proposition}\label{prop-lambark}
 Under Assumptions \ref{ass2}-\ref{ass4}, there exists $\widehat{k}\in\mathbb{N}$ such that for each $k\ge\widehat{k}$, there is a vector $\overline{\lambda}^k\in\mathcal{M}(u^k,\overline{x}^k)$, and furthermore, the sequence $\{\overline{\lambda}^k\}_{k\ge\widehat{k}}$ is bounded.
\end{proposition}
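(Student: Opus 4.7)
The plan is to handle existence and boundedness separately, using Assumption~\ref{ass2} for the former and a finite-covering argument built on the partial BMP (Assumption~\ref{ass4}) for the latter.

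For existence, I would repeat the reasoning used in the proof of Lemma~\ref{lemma-welldef}(i). Fix $k\in\mathbb{N}$. Since $\overline{x}^k$ is optimal for subproblem \eqref{subprobkj} with $j=j_k$, the first-order optimality condition reads $0\in\partial F_k(\overline{x}^k)+\mathcal{N}_{\Gamma_k}(\overline{x}^k)$. Assumption~\ref{ass2} applied at $(\overline{x}^k,0)$ yields the normal-cone decomposition $\mathcal{N}_{\Gamma_k}(\overline{x}^k)=\nabla_{\!x}G(\overline{x}^k,x^k,L^k)\mathcal{N}_{\mathbb{R}_-^m}(G(\overline{x}^k,x^k,L^k))$, so there exist $\overline{v}^k\in\partial\phi(\overline{x}^k)$ and $\overline{\lambda}^k\in\mathcal{N}_{\mathbb{R}_-^m}(H(u^k,\overline{x}^k))$ with $\nabla_{\!x}\theta(u^k,\overline{x}^k)+\overline{v}^k+\nabla_{\!x}H(u^k,\overline{x}^k)\overline{\lambda}^k=0$. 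Hence $\overline{\lambda}^k\in\mathcal{M}(u^k,\overline{x}^k)\ne\emptyset$.

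For boundedness, set $y^k:=-\nabla_{\!x}\theta(u^k,\overline{x}^k)-\overline{v}^k$ for the particular $\overline{v}^k$ chosen above. By construction, $y^k=\nabla_{\!x}H(u^k,\overline{x}^k)\overline{\lambda}^k\in\nabla_{\!x}H(u^k,\overline{x}^k)\mathcal{N}_{\mathbb{R}_-^m}(H(u^k,\overline{x}^k))\subset\mathcal{N}_{\mathcal{S}(u^k)}(\overline{x}^k)$, so $\Lambda(u^k,\overline{x}^k,y^k)\ne\emptyset$. For each $(u^*,\overline{x}^*)\in\Omega^*$ Assumption~\ref{ass4} supplies open neighborhoods $\mathcal{U}^*\times\mathcal{V}^*$ and a constant $\kappa^*>0$ on which partial BMP holds uniformly. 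Since $\Omega^*$ is compact by Lemmas~\ref{lemma-xbark}--\ref{lemma-bound}, a finite subcover $\{\mathcal{U}^*_i\times\mathcal{V}^*_i\}_{i=1}^N$ can be extracted, giving a uniform constant $\widehat{\kappa}:=\max_{i}\kappa^*_i$. Standard facts about cluster point sets of bounded sequences yield ${\rm dist}((u^k,\overline{x}^k),\Omega^*)\to 0$, so there exists $\widehat{k}$ such that for every $k\ge\widehat{k}$ the pair $(u^k,\overline{x}^k)$ lies in one of these finitely many product neighborhoods; partial BMP then delivers some $\widetilde{\lambda}^k\in\Lambda(u^k,\overline{x}^k,y^k)$ with $\|\widetilde{\lambda}^k\|\le\widehat{\kappa}\|y^k\|$, and by the identity $\nabla_{\!x}\theta(u^k,\overline{x}^k)+\overline{v}^k+\nabla_{\!x}H(u^k,\overline{x}^k)\widetilde{\lambda}^k=0$ one sees $\widetilde{\lambda}^k\in\mathcal{M}(u^k,\overline{x}^k)$. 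Replacing $\overline{\lambda}^k$ by $\widetilde{\lambda}^k$ for $k\ge\widehat{k}$ retains membership in $\mathcal{M}(u^k,\overline{x}^k)$.

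It remains to bound $\{y^k\}_{k\ge\widehat{k}}$. Lemma~\ref{lemma-bound} provides uniform bounds on $\{\mathcal{Q}_k\}, \{\xi^k\}$ and $\{\overline{x}^k\}$, while Assumption~\ref{ass3} bounds $\{x^k\}$ and the strict continuity of $\nabla\!f$ on the compact set containing $\{x^k\}$ yields boundedness of $\{\nabla\!f(x^k)\}$; from $\nabla_{\!x}\theta(u^k,\overline{x}^k)=\nabla\!f(x^k)+\xi^k+\mathcal{Q}_k(\overline{x}^k-x^k)$ one concludes boundedness of the first term of $y^k$. The local boundedness of $\partial\phi$ (Remark~\ref{remark-subdiff}) together with the boundedness of $\{\overline{x}^k\}$ bounds $\{\overline{v}^k\}$. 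Thus $\sup_{k\ge\widehat{k}}\|y^k\|<\infty$, and consequently $\sup_{k\ge\widehat{k}}\|\widetilde{\lambda}^k\|<\infty$.

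The main obstacle is the uniform-constant step: partial BMP only provides point-wise neighborhoods with point-dependent constants, and promoting these to a single $\widehat{\kappa}$ along the sequence requires both the compactness of $\Omega^*$ (delivered by Lemma~\ref{lemma-bound}) and the convergence of $\{(u^k,\overline{x}^k)\}$ into arbitrarily small neighborhoods of $\Omega^*$, which must be verified carefully from the definition of the cluster set.
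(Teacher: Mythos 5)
Your proof is correct and follows essentially the same route as the paper's: a Heine--Borel covering of the compact cluster set $\Omega^*$ yields a uniform BMP constant along the tail of $\{(u^k,\overline{x}^k)\}$, and the vector $y^k$ is bounded via the boundedness of $\nabla\!f(x^k)+\xi^k+\mathcal{Q}_k(\overline{x}^k-x^k)$ and of the subgradients of $\phi$ along the bounded sequence $\{\overline{x}^k\}$. The only cosmetic difference is that you invoke Assumption \ref{ass2} to obtain a multiplier for every $k$ before bounding it, whereas the paper gets both existence and the bound for $k\ge\widehat{k}$ directly from the BMP by choosing $-y^k\in\partial F_k(\overline{x}^k)$ with $y^k\in\mathcal{N}_{\mathcal{S}(u^k)}(\overline{x}^k)$ and using $\Lambda(u^k,\overline{x}^k,y^k)\subset\mathcal{M}(u^k,\overline{x}^k)$.
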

\begin{proof}
 By Assumption \ref{ass4} and Definition \ref{def-BMP}, for every $(u^{*},\overline{x}^{*})\in \Omega^*$, there exist $\varepsilon_{u^{*},\overline{x}^{*}}>0$ and $\kappa_{u^{*},\overline{x}^{*}}>0$ such that for all $(u,x)\in\mathbb{B}((u^*,\overline{x}^*),\varepsilon_{u^{*},\overline{x}^{*}})\cap[\mathbb{U}\times\mathcal{S}(u)]$ and $y\in\mathcal{N}_{\mathcal{S}(u)}(x)$,  
 \begin{equation}\label{BMP}  \Lambda(u,x,y)\cap\kappa_{u^{*},\overline{x}^{*}}\|y\|\mathbb{B}\ne\emptyset,
 \end{equation}
 where $\Lambda$ is the mapping defined in Definition \ref{def-BMP}.
 Apparently, $\bigcup_{(u^*,\overline{x}^*)\in \Omega^*}\mathbb{B}^{\circ}((u^*,\overline{x}^*),\varepsilon_{u^{*},\overline{x}^{*}})$ is an open covering of the compact set $\Omega^*$. By Heine-Borel covering theorem, there exist $(u^{1,*},\overline{x}^{1,*}),\ldots,(u^{p,*},\overline{x}^{p,*})\in \Omega^*$ for some $p\in\mathbb{N}$ and $\varepsilon_{u^{1,*},\overline{x}^{1,*}}>0,\ldots,\varepsilon_{u^{p,*},\overline{x}^{p,*}}>0$ such that $ \Omega^*\subset\bigcup_{i=1}^p\mathbb{B}^{\circ}((u^{i,*},\overline{x}^{i,*}),\varepsilon_{u^{i,*},\overline{x}^{i,*}})$. Note that $\lim_{k\to\infty}{\rm dist}((u^k,\overline{x}^{k}),\Omega^*)=0$. There exists an index $\widehat{k}\in\mathbb{N}$ such that for all $k\ge\widehat{k}$, 
 \(
  (u^k,\overline{x}^k)\in\bigcup_{i=1}^p\mathbb{B}((u^{i,*},\overline{x}^{i,*}),\varepsilon_{u^{i,*},\overline{x}^{i,*}}).
 \)
 Write $\kappa\!:=\max_{i\in[p]}\kappa_{u^{i,*},\overline{x}^{i,*}}$ and $\varepsilon\!:=\min_{i\in[p]}\varepsilon_{u^{i,*},\overline{x}^{i,*}}$. For each $k\ge\widehat{k}$ (if necessary by increasing $\widehat{k}$), we have $(u^k,\overline{x}^k)\in\mathbb{B}((u^{i,*},\overline{x}^{i,*}),\varepsilon)\cap{\rm gph}\,\mathcal{S}$ for some $i\in[p]$. For each $k\in\mathbb{N}$, from the optimality of $\overline{x}^k$ to the problem \eqref{para-prob} associated with $u=u^k$ and the expression of $\theta$ in \eqref{theta-def1}, we have $0\in\nabla\vartheta_k(\overline{x}^k)+\partial\phi(\overline{x}^k)+\mathcal{N}_{\mathcal{S}(u^k)}(\overline{x}^k)\subset\partial F_k(\overline{x}^k)+\mathcal{N}_{\mathcal{S}(u^k)}(\overline{x}^k)$, so there exists $-y^k\in\partial F_k(\overline{x}^k)$ such that $y^k\in\mathcal{N}_{\mathcal{S}(u^k)}(\overline{x}^k)$. From the above \eqref{BMP}, for each $k\ge\widehat{k}$, there exists $\overline{\lambda}^k\in\Lambda(u^k,\overline{x}^k,y^k)$ such that $\|\overline{\lambda}^k\|\le\kappa\|y^k\|$. Moreover, for each $k\ge\widehat{k}$, since $\Lambda(u^k,\overline{x}^k,y^k)\subset\mathcal{M}(u^k,\overline{x}^k)$, it holds $\overline{\lambda}^k\in\mathcal{M}(u^k,\overline{x}^k)$. Note that the sequence $\{y^k\}_{k\ge\widehat{k}}$ is bounded in view of Remark \ref{bound-subdiff} and $-y^k\in\partial F_k(\overline{x}^k)$ for each $k$. The conclusion then follows. 
 \end{proof}

 We notice that Proposition \ref{prop-lambark} identifies a bounded multiplier sequence from the set sequence $\{\mathcal{M}(u^k,\overline{x}^k)\}_{k\in\mathbb{N}}$, though the set sequence itself is not uniformly bounded. This is significantly different from the existing works on MBA-like methods. They capture such a bounded multiplier sequence under the MFCQ for the system $g(x)\in\mathbb{R}_{-}^m$ at all feasible points, which ensures the uniform boundedness of the set sequence $\{\mathcal{M}(u^k,\overline{x}^k)\}_{k\in\mathbb{N}}$ and is stronger than Assumption \ref{ass4} by the discussion in Remark \ref{remark-ass4}. 

 Now we are ready to apply Proposition \ref{prop-lambark} to achieve the subsequential convergence of $\{x^k\}_{k\in\mathbb{N}}$, i.e., every accumulation point being a strong stationary point of \eqref{prob}.
\begin{theorem}\label{theorem-sconverge}
 Under Assumptions \ref{ass2}-\ref{ass4}, $\omega(x^0)\subset X_{s}^*$ and $F(x)=\varpi^*$ for all $x\in\omega(x^0)$. 
\end{theorem}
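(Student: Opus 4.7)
The plan is to fix any $x^*\in\omega(x^0)$ with $x^{k_l}\to x^*$ along some subsequence, and then pass to the limit in the KKT system for subproblem \eqref{subprobkj} to recover the strong stationarity condition. By Lemma \ref{lemma-bound} the sequences $\{\xi^{k_l}\}$, $\{L^{k_l}\}$, $\{\mathcal{Q}_{k_l}\}$, and $\{\overline{x}^{k_l}\}$ are bounded, so by passing to a further subsequence I may assume $(\xi^{k_l},L^{k_l},\mathcal{Q}_{k_l},\overline{x}^{k_l})\to(\xi^*,L^*,\mathcal{Q}^*,\overline{x}^*)$, and Lemma \ref{lemma-xbark} forces $\overline{x}^*=x^*$. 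Writing $u^*=(L^*,\mathcal{Q}^*,\xi^*,x^*)$, the limit point $(u^*,\overline{x}^*)$ lies in the set $\Omega^*$ of Assumption \ref{ass4}.

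Next, I apply Proposition \ref{prop-lambark} to extract, for all large $l$, multipliers $\overline{\lambda}^{k_l}\in\mathcal{M}(u^{k_l},\overline{x}^{k_l})$ from a bounded set; then by passing to yet a further subsequence $\overline{\lambda}^{k_l}\to\overline{\lambda}^*$. Unpacking $\mathcal{M}(u^{k_l},\overline{x}^{k_l})$ from \eqref{para-multiplier} yields some $\overline{v}^{k_l}\in\partial\phi(\overline{x}^{k_l})$ with $\nabla\vartheta_{k_l}(\overline{x}^{k_l})+\overline{v}^{k_l}+\nabla_{\!x}G(\overline{x}^{k_l},x^{k_l},L^{k_l})\overline{\lambda}^{k_l}=0$ and $\overline{\lambda}^{k_l}\in\mathcal{N}_{\mathbb{R}_{-}^m}(G(\overline{x}^{k_l},x^{k_l},L^{k_l}))$. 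The local boundedness of $\partial\phi$ (Remark \ref{remark-subdiff}) lets me assume $\overline{v}^{k_l}\to\overline{v}^*$, and outer semicontinuity gives $\overline{v}^*\in\partial\phi(x^*)$. Similarly, $\xi^{k_l}\in\partial(-\psi)(x^{k_l})$ together with the osc property of $\partial(-\psi)$ yields $\xi^*\in\partial(-\psi)(x^*)$.

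The key passage to the limit uses Lemma \ref{lemma-xbark}, Lemma \ref{lemma-bound}, and the expressions $\nabla\vartheta_k(\overline{x}^k)=\nabla f(x^k)+\xi^k+\mathcal{Q}_k(\overline{x}^k-x^k)$ and $\nabla_{\!x}G_i(\overline{x}^k,x^k,L^k)=\nabla g_i(x^k)+L_i^k(\overline{x}^k-x^k)$: since $\overline{x}^{k_l}-x^{k_l}\to 0$ while $\mathcal{Q}_{k_l}$ and $L^{k_l}$ stay bounded, the perturbation terms vanish, and $G(\overline{x}^{k_l},x^{k_l},L^{k_l})\to g(x^*)$ by continuity. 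Thus the limiting KKT system reads $0=\nabla f(x^*)+\xi^*+\overline{v}^*+\nabla g(x^*)\overline{\lambda}^*$ with $\overline{\lambda}^*\in\mathcal{N}_{\mathbb{R}_{-}^m}(g(x^*))$, and combined with $\xi^*\in\partial(-\psi)(x^*)$ and $\overline{v}^*\in\partial\phi(x^*)$ this is precisely the strong stationarity condition in Definition \ref{sspoint-def}, hence $x^*\in X_{s}^*$.

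For the constancy of $F$ on $\omega(x^0)$, since $\{x^k\}\subset\Gamma$ by Lemma \ref{lemma-welldef} and $\Gamma$ is closed, $x^*\in\Gamma$, so $F(x^*)=f(x^*)+\phi(x^*)-\psi(x^*)$. Continuity of $f,\phi,\psi$ on $\mathcal{O}\supset\Gamma$ (Assumption \ref{ass1}) gives $F(x^{k_l})\to F(x^*)$, and together with $F(x^k)\to\varpi^*$ from Corollary \ref{corollary-objF} this forces $F(x^*)=\varpi^*$. The main obstacle is the bounded-multiplier extraction in the first step, which is precisely the content of Proposition \ref{prop-lambark} made possible by Assumption \ref{ass4}; once bounded multipliers are in hand, the remaining closure arguments are standard applications of outer semicontinuity of $\partial\phi$ and $\partial(-\psi)$.
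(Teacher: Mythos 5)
Your proposal is correct and follows essentially the same route as the paper: boundedness of $(\xi^k,L^k,\mathcal{Q}_k,\overline{x}^k)$ from Lemma \ref{lemma-bound}, closeness $\overline{x}^k-x^k\to0$ from Lemma \ref{lemma-xbark}, extraction of a bounded multiplier sequence via Proposition \ref{prop-lambark} under Assumption \ref{ass4}, passage to the limit in the subproblem KKT system using outer semicontinuity of $\partial\phi$, $\partial(-\psi)$ and $\mathcal{N}_{\mathbb{R}_{-}^m}$, and continuity of $F$ relative to $\Gamma$ for the value identity. The only cosmetic difference is that you extract a convergent subsequence of $\overline{v}^{k}\in\partial\phi(\overline{x}^{k})$ explicitly (via local boundedness) rather than passing the limit in the inclusion directly, which is an equivalent step.
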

\begin{proof}
 Pick any $x^*\!\in\omega(x^0)$. There exists an index set $\mathcal{K}\subset\mathbb{N}$ such that $x^*=\lim_{\mathcal{K}\ni k\to\infty}x^k$, and  $\lim_{\mathcal{K}\ni k\to\infty}\overline{x}^k=x^*$ follows from Lemma \ref{lemma-xbark}. By Lemma \ref{lemma-bound}, if necessary by taking a subsequence, we have $\lim_{\mathcal{K}\ni k\to\infty}u^k=u^*$ for some $u^*=(L^*,\mathcal{Q}^*,\xi^*,x^*)\in\mathbb{U}$. From Proposition \ref{prop-lambark}, for each $k\ge\widehat{k}$, there exists $  \overline{\lambda}^k\in\mathcal{M}(u^k,\overline{x}^k)$ and $\{\overline{\lambda}^k\}_{k\ge\widehat{k}}$ is bounded. If necessary by taking a subsequence, we assume $\lim_{\mathcal{K}\ni k\to\infty}\overline{\lambda}^k=\overline{\lambda}^*$. By the definition of $\mathcal{M}$ in \eqref{para-multiplier}, for each $k\ge\widehat{k}$, 
 \[
  0\in\nabla_{\!x}\theta(u^k,\overline{x}^k)+\partial\phi(\overline{x}^k)+\nabla_{\!x}H(u^k,\overline{x}^k)\overline{\lambda}^k\ \ {\rm with}\ \ \overline{\lambda}^k\in\mathcal{N}_{\mathbb{R}_{-}^m}(H(u^k,\overline{x}^k)).
 \]
 Note that $\nabla_{\!x}\theta(\cdot,\cdot)$ and $\nabla_{\!x}H(\cdot,\cdot)$ are continuous by \eqref{theta-def1}-\eqref{h-def1}. Passing the limit $\mathcal{K}\ni k\to\infty$ to the above inclusions and using the osc property of $\partial\phi$ and  $\mathcal{N}_{\mathbb{R}_{-}^m}$ leads to 
 \[
 0\in\nabla_{\!x}\theta(u^*,x^*)+\partial\phi(x^*)+\nabla_{\!x}H(u^*,x^*)\overline{\lambda}^*\ \ {\rm with}\ \ \overline{\lambda}^*\in\mathcal{N}_{\mathbb{R}_{-}^m}(H(u^*,x^*)). 
 \]
 In view of \eqref{theta-def1}-\eqref{h-def1}, we have $\nabla_{\!x}\theta(u^*,x^*)=\nabla\!f(x^*)+\xi^*,H(u^*,x^*)=G(x^*,x^*,L^*)=g(x^*)$ and $\nabla_{\!x}H(u^*,x^*)=\nabla_{x}G(x^*,x^*,L^*)=\nabla g(x^*)$. Then, the above two inclusions reduce to
 \[
   0\in\nabla\!f(x^*)+\xi^*+\partial\phi(x^*)+\nabla g(x^*)\overline{\lambda}^*\ \ {\rm with}\ \ \overline{\lambda}^*\in\mathcal{N}_{\mathbb{R}_{-}^m}(g(x^*)).
 \]
 Note that $\xi^*\in\partial(-\psi)(x^*)$ because $\xi^k\in\partial(-\psi)(x^k)$ for each $k\in\mathbb{N}$ and $\partial(-\psi)$ is osc. The above equation shows that $x^*$ is a strong stationary point of \eqref{prob}. The inclusion $\omega(x^0)\subset X_{s}^*$ follows by the arbitrariness of $x^*\in\omega(x^0)$. Note that $F$ is continuous relative to its domain $\Gamma$ and $\{x^k\}_{k\in\mathbb{N}}\subset\Gamma$. Therefore, $F(x^*)=\lim_{\mathcal{K}\ni k\to\infty}F(x^k)= \varpi^*$. 
\end{proof} 

It is worth noting that the multiplier sequence $\{\lambda^k\}_{k\in\mathbb{N}}$ generated by Algorithm \ref{iMBA} does not take a part in the proof of the subsequential convergence. Instead, it joins in the application of Proposition \ref{prop-ebound} to establish the following result, which bounds the distance of the inexact solution $x^{k+1}$ of \eqref{subprob} from its unique solution $\overline{x}^k$ by the difference $\|x^{k+1}\!-x^k\|$. This result, as will be shown in Section \ref{sec4.2}, plays a crucial role in achieving the full convergence of the sequence $\{x^k\}_{k\in\mathbb{N}}$. 
\begin{proposition}\label{prop-eboundk}
 Under Assumptions \ref{ass2}-\ref{ass4}, there exists $\widehat{\gamma}>0$ such that for all $k\ge\widehat{k}$,
 \[
 \|x^{k+1}-\overline{x}^{k}\|\le\widehat{\gamma}\sqrt{R_k(x^{k+1},v^{k+1},\lambda^{k+1})}\le\widehat{\gamma}\sqrt{\beta_{R}/2}\|x^{k+1}-x^k\|.
 \]
\end{proposition}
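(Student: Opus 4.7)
The plan is to split the claim into two inequalities, of which the right-hand one is immediate. Specializing the inexactness condition \eqref{inexact1} to $j=j_k$ gives $R_k(x^{k+1},v^{k+1},\lambda^{k+1})\le \frac{\beta_R}{2}\|x^{k+1}-x^k\|^2$, so $\sqrt{R_k(x^{k+1},v^{k+1},\lambda^{k+1})}\le\sqrt{\beta_R/2}\,\|x^{k+1}-x^k\|$. The substantive task is therefore to produce a uniform constant with $\|x^{k+1}-\overline{x}^k\|\le\widehat{\gamma}\sqrt{R_k(x^{k+1},v^{k+1},\lambda^{k+1})}$ valid for every $k\ge\widehat{k}$.

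To obtain this I would invoke the global error bound of Proposition \ref{prop-ebound} on the strongly convex subproblem \eqref{subprob}. With the identifications $\overline{u}=u^k$, $\theta(u^k,\cdot)=\vartheta_k(\cdot)$ (strongly convex with modulus $\lambda_{\min}(\mathcal{Q}_k)\ge\mu_{\min}$) and $H(u^k,\cdot)=G(\cdot,x^k,L^k)$ (componentwise convex), every hypothesis of Proposition \ref{prop-ebound} is met: Assumption \ref{ass2} supplies the subregularity of $\mathcal{G}_{k,j_k}$ at $(\overline{x}^k,0)$, while Proposition \ref{prop-lambark} furnishes, for each $k\ge\widehat{k}$, a multiplier $\overline{\lambda}^k\in\mathcal{M}(u^k,\overline{x}^k)\subset\mathcal{N}_{\mathbb{R}_-^m}(G(\overline{x}^k,x^k,L^k))$ to play the role of $\overline{\lambda}$. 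Instantiating Proposition \ref{prop-ebound} at $(x,\lambda,v)=(x^{k+1},\lambda^{k+1},v^{k+1})$ with, say, the Euclidean norm, and abbreviating $r^k:=\nabla\vartheta_k(x^{k+1})+v^{k+1}+\nabla_{\!x} G(x^{k+1},x^k,L^k)\lambda^{k+1}$, I would read off
\[
\mu_{\min}\|x^{k+1}-\overline{x}^k\|^2\le \langle x^{k+1},r^k\rangle - \langle\lambda^{k+1},G(x^{k+1},x^k,L^k)\rangle + \|\overline{x}^k\|\,\|r^k\| + \|\overline{\lambda}^k\|\,\|[G(x^{k+1},x^k,L^k)]_+\|.
\]

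The final step is to compare the right-hand side with $R_k(x^{k+1},v^{k+1},\lambda^{k+1})$. Replacing the first two (possibly signed) summands by their positive parts only enlarges the estimate, and those positive parts are precisely two of the four summands defining $R_k$; the remaining two summands of $R_k$ dominate $\|r^k\|$ and $\|[G(x^{k+1},x^k,L^k)]_+\|$ up to finite-dimensional norm-equivalence constants (note $\|\cdot\|_2\le\sqrt{m}\|\cdot\|_\infty$ on $\mathbb{R}^m$). Combining this with the boundedness of $\{\overline{x}^k\}_{k\in\mathbb{N}}$ from Lemma \ref{lemma-bound}(i) and of $\{\overline{\lambda}^k\}_{k\ge\widehat{k}}$ from Proposition \ref{prop-lambark} yields a constant $C>0$ independent of $k$ with $\mu_{\min}\|x^{k+1}-\overline{x}^k\|^2\le C\,R_k(x^{k+1},v^{k+1},\lambda^{k+1})$, and setting $\widehat{\gamma}:=\sqrt{C/\mu_{\min}}$ delivers the first inequality. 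I expect the main bookkeeping obstacle to be matching the norm convention in Proposition \ref{prop-ebound} so that all four bound-terms align, up to \emph{uniform} constants, with the four summands of $R_k$; once that is settled, the two boundedness facts above close the argument.
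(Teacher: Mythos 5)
Your proposal is correct and follows essentially the same route as the paper: the second inequality is read off from \eqref{inexact1} with $j=j_k$, and the first comes from applying the error bound of Proposition \ref{prop-ebound} to subproblem \eqref{subprob} with $(\overline{u},\overline{x},\overline{\lambda})=(u^k,\overline{x}^k,\overline{\lambda}^k)$ (using Assumption \ref{ass2} for subregularity and Proposition \ref{prop-lambark} for the multiplier) and then absorbing $\|\overline{x}^k\|$ and $\|\overline{\lambda}^k\|$ into a uniform constant via Lemma \ref{lemma-bound}(i) and Proposition \ref{prop-lambark}. The only cosmetic difference is your choice of the Euclidean norm plus the $\|\cdot\|_2\le\sqrt{m}\|\cdot\|_\infty$ adjustment, whereas the paper picks the $\ell_1$/$\ell_\infty$ pairing so the bound matches the summands of $R_k$ directly; this affects only the constant $\widehat{\gamma}$.
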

\begin{proof}
 Fix any $k\ge\widehat{k}$. From \eqref{theta-def1}-\eqref{h-def1}, every component of the mapping $H(u^k,\cdot)$ is a convex function with $H(u^k,x^k)=G(x^k,x^k,L^k)\le 0$ and $\theta(u^k,\cdot)=\vartheta_k(\cdot)$ is strongly convex with modulus $\mu_{\min}>0$. Invoking Proposition \ref{prop-ebound} with $(\overline{u},\overline{x})=(u^k,\overline{x}^k)$ for $\overline{\lambda}=\overline{\lambda}^k$ with $\overline{\lambda}^k$ from Proposition \ref{prop-lambark} and $(x,v,\lambda)=(x^{k+1},v^{k+1},\lambda^{k+1})\in\mathbb{R}^n\times\partial\phi(x^{k+1})\times\mathbb{R}_+^m$ yields that
 \begin{align}\label{keybound0}
  \|x^{k+1}-\overline{x}^{k}\|^2&\le\mu_{\min}^{-1}\big[\langle x^{k+1},\nabla\vartheta_k(x^{k+1})+v^{k+1}+\nabla_{\!x} G(x^{k+1},x^k,L^k)\lambda^{k+1}\rangle\nonumber\\		
  &\quad\ +\|\overline{x}^k\|\|\nabla\vartheta_k(x^{k+1})+v^{k+1}+\nabla_{\!x} G(x^{k+1},x^k,L^k)\lambda\|\nonumber\\
  &\quad\ +\|\overline{\lambda}^k\|_1\|[G(x^{k+1},x^k,L^k)]_{+}\|_{\infty}-\langle\lambda^{k+1},G(x^{k+1},x^k,L^k)\rangle\big].
 \end{align} 
 Since the sequence $\{\overline{x}^k\}_{k\in\mathbb{N}}$ is bounded by Lemma \ref{lemma-bound} (i), there exists a constant $\widehat{\gamma}_0>0$ such that $\|\overline{x}^k\|\le\widehat{\gamma}_0$. While by Proposition \ref{prop-lambark}, there exists a constant $\widehat{\gamma}_1>0$ such that $\|\overline{\lambda}^k\|_1\le\widehat{\gamma}_1$ for all $k\ge\widehat{k}$. Let $\widehat{\gamma}:=\mu_{\min}^{-1/2}\max\{1,\widehat{\gamma}_0,\widehat{\gamma}_1\}^{1/2}$. The first inequality follows \eqref{keybound0} and the expression of $R_k$, and the second one is due to \eqref{inexact1}. The proof is completed.
\end{proof}

\subsection{Full convergence of iterate sequence}\label{sec4.2}

As well known, potential functions play a key part in the full convergence analysis of algorithms for nonconvex and nonsmooth optimization (see \cite{Latafat22,Ouyang24,Aragon25}). To construct the desirable potential function, for any $z=(x,s,L,\xi)\in\mathbb{Z}:=\mathbb{R}^n\times\mathbb{R}^n\times\mathbb{R}_{++}^m\times\mathbb{R}^n$, let 
\begin{equation}\label{Phi-def}
\Phi(z):=f(x)+\phi(x)+\delta_{G^{-1}(\mathbb{R}_{-}^m)}(x,s,L)+\langle x,\xi\rangle+\psi^*(-\xi).
\end{equation}
For each $k\in\mathbb{N}$, write $z^k\!:=(\overline{x}^{k},x^k,L^k,\xi^k)\in\mathbb{Z}$. We first disclose the relation between the function value $\Phi(z^k)$ and the objective value $F(x^{k+1})$ in the following lemma.  
\begin{lemma}\label{lemma-potential} 
 Under Assumptions \ref{ass2}-\ref{ass4}, there exist an index $\mathbb{N}\ni\overline{k}\ge\widehat{k}$ and a compact convex set $D\subset\mathcal{O}$ such that $\{(x^k,\overline{x}^k)\}_{k\ge\overline{k}}\subset D\times D$, and for each $k\ge\overline{k}$, 
 \begin{equation*}
  F(x^{k+1})-\widetilde{c}\,\|x^{k+1}-x^k\|^2\le f(\overline{x}^k)+\phi(\overline{x}^k)+\langle\overline{x}^{k},\xi^k\rangle+\psi^*(-\xi^k)=\Phi(z^k)
 \end{equation*}
 with $\widetilde{c}=(L_{\!f}+\beta_{\mathcal{Q}}\!+{\beta_{F}}/{2})+(L_{\!f}+\beta_{\mathcal{Q}}/2)\widehat{\gamma}^2\beta_{R}$, where $L_{\!f}$ is the Lipschitz constant of $\nabla\!f$ on $D$.
\end{lemma}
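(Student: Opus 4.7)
The plan is to decompose the discrepancy $F(x^{k+1}) - \Phi(z^k)$ into three pieces: a Fenchel--Young contribution from the concave part $-\psi$, a descent-lemma contribution from $f$, and a subproblem-objective contribution controlled by the inexactness criterion \eqref{inexact2}; then absorb the residual $\|\overline{x}^k - x^k\|^2$ into $\|x^{k+1} - x^k\|^2$ via Proposition \ref{prop-eboundk}.

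First I would construct the set $D$. The iterate sequence is bounded by Assumption \ref{ass3} and $\|\overline{x}^k - x^k\|\to 0$ by Lemma \ref{lemma-xbark}; since the cluster set of $\{x^k\}$ lies in $\Gamma \subset \mathcal{O}$ with $\mathcal{O}$ open, one can enlarge a neighborhood of this compact cluster set into a compact convex subset $D \subset \mathcal{O}$ and choose $\overline{k} \geq \widehat{k}$ so that $\{(x^k, \overline{x}^k)\}_{k \geq \overline{k}} \subset D \times D$. On $D$, $\nabla f$ is Lipschitz with constant $L_f$ by Assumption \ref{ass1}(ii), so the descent lemma is available between any two iterates. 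Because $\{x^k\} \subset \Gamma$ and $G(\overline{x}^k,x^k,L^k) \leq 0$ both indicator terms vanish, reducing the equality in the lemma to the definition of $\Phi$ and reducing the inequality to an estimate among smooth and convex primitives.

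Next I handle $\psi$ by Fenchel--Young. Since $\xi^k \in \partial(-\psi)(x^k) \subset -\partial\psi(x^k)$, the Fenchel equality gives $\psi^*(-\xi^k) = -\psi(x^k) - \langle \xi^k, x^k\rangle$, and the Young inequality at $x^{k+1}$ yields $-\psi(x^{k+1}) \leq \psi^*(-\xi^k) + \langle \xi^k, x^{k+1}\rangle$. Substituting this into $F(x^{k+1})$ and subtracting $\Phi(z^k)$ reduces the claim to
\[
 f(x^{k+1}) + \phi(x^{k+1}) + \langle \xi^k, x^{k+1} - \overline{x}^k\rangle \leq f(\overline{x}^k) + \phi(\overline{x}^k) + \widetilde{c}\,\|x^{k+1} - x^k\|^2.
\]
To produce this estimate I would route through the subproblem objective. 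From the explicit form of $\vartheta_k$,
\(
\phi(x^{k+1}) - \phi(\overline{x}^k) = [F_k(x^{k+1}) - F_k(\overline{x}^k)] - \langle \nabla f(x^k) + \xi^k, x^{k+1} - \overline{x}^k\rangle - \tfrac{1}{2}[\langle x^{k+1}-x^k, \mathcal{Q}_k(x^{k+1}-x^k)\rangle - \langle \overline{x}^k-x^k, \mathcal{Q}_k(\overline{x}^k-x^k)\rangle].
\)
The inexactness condition \eqref{inexact2} controls $F_k(x^{k+1}) - F_k(\overline{x}^k)$ by $\tfrac{\beta_F}{2}\|x^{k+1} - x^k\|^2$; the descent lemma applied at $x^k$ to $(x^{k+1}, x^k)$ and $(\overline{x}^k, x^k)$ controls $f(x^{k+1}) - f(\overline{x}^k) - \langle \nabla f(x^k), x^{k+1} - \overline{x}^k\rangle$ by $\tfrac{L_f}{2}(\|x^{k+1}-x^k\|^2 + \|\overline{x}^k - x^k\|^2)$; and the quadratic-form difference is bounded using $\mathcal{Q}_k \succeq 0$ (to drop the $x^{k+1}$ term) and $\|\mathcal{Q}_k\| \leq \beta_{\mathcal{Q}}$ from Lemma \ref{lemma-bound}(iii). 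Collecting gives an estimate of the form $A\|x^{k+1}-x^k\|^2 + B\|\overline{x}^k - x^k\|^2$.

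Finally, the triangle inequality combined with Proposition \ref{prop-eboundk} yields $\|\overline{x}^k - x^k\|^2 \leq (2+\widehat{\gamma}^2\beta_R)\|x^{k+1} - x^k\|^2$ for $k \geq \widehat{k}$, which collapses the $\|\overline{x}^k - x^k\|^2$ contribution into $\|x^{k+1} - x^k\|^2$ and produces $\widetilde{c}$ of the stated form. The main obstacle I anticipate is tightness of the constants: the naive bookkeeping above gives coefficients slightly looser than the stated $(L_f + \beta_{\mathcal{Q}} + \beta_F/2) + (L_f + \beta_{\mathcal{Q}}/2)\widehat{\gamma}^2\beta_R$, so matching the claimed $\widetilde{c}$ exactly requires careful anchoring of the $f$-expansion (at $x^k$ versus $\overline{x}^k$) and a judicious splitting of the $\mathcal{Q}_k$-norm residue so that the $L_f$ and $\beta_{\mathcal{Q}}$ terms are not double-counted.
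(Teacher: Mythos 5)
Your proposal follows essentially the same route as the paper: construct the compact convex set $D$ from Assumption \ref{ass3} and $\|\overline{x}^k-x^k\|\to 0$, dispose of $\psi$ via the conjugate relation at $\xi^k$ (the paper does this through \eqref{psi-cvx} plus the definition of $\psi^*$, which is the same content as your Fenchel--Young step), control $\phi(x^{k+1})-\phi(\overline{x}^k)$ through $F_k(x^{k+1})-F_k(\overline{x}^k)\le\frac{\beta_F}{2}\|x^{k+1}-x^k\|^2$ together with $0\preceq\mathcal{Q}_k\preceq\beta_{\mathcal{Q}}\mathcal{I}$ (this rearrangement is exactly the paper's inequality \eqref{temp-ineq41}), and finish with Proposition \ref{prop-eboundk}. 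The only substantive deviation is where you anchor the descent lemma for $f$: expanding at $x^k$ in both directions, the linear terms cancel and you are left with $\frac{L_f+\beta_F}{2}\|x^{k+1}-x^k\|^2+\frac{L_f+\beta_{\mathcal{Q}}}{2}\|\overline{x}^k-x^k\|^2$, and converting $\|\overline{x}^k-x^k\|^2\le(2+\widehat{\gamma}^2\beta_R)\|x^{k+1}-x^k\|^2$ gives $\widetilde{c}\,'=\frac{3}{2}L_f+\beta_{\mathcal{Q}}+\frac{\beta_F}{2}+\frac{1}{2}(L_f+\beta_{\mathcal{Q}})\widehat{\gamma}^2\beta_R$, which is \emph{not} the stated constant and strictly exceeds it whenever $\widehat{\gamma}^2\beta_R<1$; so, as written, your bookkeeping proves the lemma only with a possibly larger $\widetilde{c}$. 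You correctly diagnosed the fix: the paper expands $f$ at $\overline{x}^k$, so the quadratic remainder is $\frac{L_f}{2}\|x^{k+1}-\overline{x}^k\|^2$, bounds the mismatch $\langle\nabla f(\overline{x}^k)-\nabla f(x^k),x^{k+1}-\overline{x}^k\rangle\le L_f\|\overline{x}^k-x^k\|\,\|x^{k+1}-\overline{x}^k\|$ via \eqref{flip-D} and Young, splits $\frac{L_f+\beta_{\mathcal{Q}}}{2}\|\overline{x}^k-x^k\|^2\le(L_f+\beta_{\mathcal{Q}})(\|x^{k+1}-x^k\|^2+\|x^{k+1}-\overline{x}^k\|^2)$, and then applies Proposition \ref{prop-eboundk} only to the $\|x^{k+1}-\overline{x}^k\|^2$ block (total coefficient $2L_f+\beta_{\mathcal{Q}}$), which yields exactly $(L_f+\beta_{\mathcal{Q}}+\frac{\beta_F}{2})+(L_f+\frac{\beta_{\mathcal{Q}}}{2})\widehat{\gamma}^2\beta_R$. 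This is a bookkeeping discrepancy rather than a conceptual gap, but to certify the lemma with the stated $\widetilde{c}$ you must carry out the $\overline{x}^k$-anchored variant rather than the $x^k$-anchored one.
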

\begin{proof}
Recall that $\{x^k\}_{k\in\mathbb{N}}\subset\Gamma$ and $\lim_{k\to\infty}\|\overline{x}^k-x^k\|=0$, so there exists $\overline{k}\ge\widehat{k}$ such that $\overline{x}^k\in x^k\!+(1/\overline{k})\mathbb{B}\subset\mathcal{O}$ for all $k\ge\overline{k}$. Together with  Assumption \ref{ass3}, there exists a compact convex set $D\subset\mathcal{O}$ such that $\{(x^k,\overline{x}^k)\}_{k\ge\overline{k}}\subset D\times D$. Note that $\nabla\!f$ is Lipschitz continuous on $D$ because it is strictly continuous on $\mathcal{O}$ by Assumption \ref{ass1} (ii). Then,  
\begin{equation}\label{flip-D}
 \|\nabla\!f(x)-\nabla\!f(y)\|\le L_{\!f}\|x-y\|\quad\forall x,y\in D.
\end{equation} 
For each $k\in\mathbb{N}$, from the second inequality of \eqref{inexact2} with $j=j_k$ and Lemma \ref{lemma-bound} (iii), it holds 
\begin{equation}\label{temp-ineq41}
\langle\nabla\!f(x^k)+\xi^k,x^{k+1}\!-\overline{x}^{k}\rangle\le\frac{\beta_{\mathcal{Q}}}{2}\|\overline{x}^{k}-x^k\|^2+\frac{\beta_{F}}{2}\|x^{k+1}\!-x^k\|^2+\phi(\overline{x}^k)-\phi(x^{k+1}).
\end{equation}  
 Now invoking the descent lemma and the expression of $F$, for each $k\ge\overline{k}$, we have
 \begin{align}\label{temp-fineq0}
 F(x^{k+1})&\le f(\overline{x}^{k})+\langle\nabla\!f(\overline{x}^{k}),x^{k+1}-\overline{x}^{k}\rangle+\frac{L_{\!f}}{2}\|x^{k+1}-\overline{x}^{k}\|^2+\phi(x^{k+1})-\psi(x^{k+1})\nonumber\\
 &\stackrel{\eqref{temp-ineq41}}{\le}f(\overline{x}^{k})+\phi(\overline{x}^{k})+\langle\nabla\!f(\overline{x}^{k})-\nabla\!f(x^k),x^{k+1}-\overline{x}^{k}\rangle-\langle\xi^k,x^{k+1}-\overline{x}^k\rangle\nonumber\\
 &\quad+\frac{\beta_{\mathcal{Q}}}{2}\|\overline{x}^{k}-x^k\|^2+\frac{\beta_{F}}{2}\|x^{k+1}\!-x^k\|^2+\frac{L_{\!f}}{2}\|x^{k+1}-\overline{x}^{k}\|^2-\psi(x^{k+1})\nonumber\\
 &\stackrel{\eqref{flip-D}}{\le} f(\overline{x}^{k})+\phi(\overline{x}^{k})+L_{\!f}\|x^k-\overline{x}^{k}\|\|x^{k+1}-\overline{x}^{k}\|+\frac{\beta_{\mathcal{Q}}}{2}\|\overline{x}^{k}-x^k\|^2\nonumber\\
 &\quad\ +\frac{L_{\!f}}{2}\|x^{k+1}-\overline{x}^{k}\|^2+\frac{\beta_{F}}{2}\|x^{k+1}\!-x^k\|^2-\psi(x^{k+1})-\langle\xi^k,x^{k+1}-\overline{x}^k\rangle\nonumber\\
 &\le f(\overline{x}^{k})+\phi(\overline{x}^{k})+(L_{\!f}+\beta_{\mathcal{Q}}+{\beta_{F}}/{2})\|x^{k+1}\!-x^k\|^2\nonumber\\
 &\quad\ +(2L_{\!f}+\beta_{\mathcal{Q}})\|x^{k+1}-\overline{x}^{k}\|^2+\langle\xi^k,\overline{x}^k\rangle-[\psi(x^{k+1})+\langle\xi^k,x^{k+1}\rangle].
 \end{align}
 From the previous \eqref{psi-cvx} for $x=x^{k+1}$ and the definition of conjugate functions, it follows 
 \[
  \psi(x^{k+1})+\langle\xi^k,x^{k+1}\rangle\ge\psi(x^k)+\langle\xi^k,x^k\rangle\ge -\psi^*(-\xi^k)\quad\forall k\in\mathbb{N}.
 \]
 Together with the above \eqref{temp-fineq0} and Proposition \ref{prop-eboundk}, we obtain the desired inequality. The equality follows the expression of $\Phi(z^k)$ and $G(\overline{x}^k,x^k,L^k)\le 0$. The proof is finished.
\end{proof}

Motivated by Lemma \ref{lemma-potential}, we define the potential function $\Phi_{\widetilde{c}}:\mathbb{Z}\times\mathbb{R}^n\to\overline{\mathbb{R}}$ by 
\begin{equation}\label{Phic-def}
\Phi_{\widetilde{c}}(w):=\Phi(z)+\widetilde{c}\|d\|^2\quad\forall w=(z,d)\in\mathbb{Z}\times\mathbb{R}^n,
\end{equation}
where $\widetilde{c}$ is the same as in Lemma \ref{lemma-potential}. For each $k\in\mathbb{N}$, let $w^k\!:=(z^k,x^{k+1}\!-\!x^k)$. We next prove that the function $\Phi_{\widetilde{c}}$ keeps unchanged on the set of cluster points of $\{w^k\}_{k\in\mathbb{N}}$.  
\begin{proposition}\label{prop-ZWstar}
 Denote by $Z^*$ the set of cluster points of $\{z^k\}_{k\in\mathbb{N}}$, and by $W^*$ the set of cluster points of $\{w^k\}_{k\in\mathbb{N}}$. Then, under Assumptions \ref{ass2}-\ref{ass4}, the following assertions hold.
 \begin{description}
 \item[(i)] The set $W^*$ is nonempty and compact with $W^*=Z^*\times\{0\}$. 
		
 \item[(ii)] For every $z^*=(\overline{x}^*,x^*,L^*,\xi^*)\in Z^*$, $\overline{x}^*=x^*\in X_{s}^*$ and $\xi^*\in\partial (-\psi)(x^*)$.
		
 \item[(iii)] $\Phi(z)=\varpi^*=\Phi_{\widetilde{c}}(w)$ for all $z\in Z^*$ and $w\in W^*$
 
 \item[(iv)]  $\lim_{k\to\infty}\Phi(z^k)=\varpi^*=\lim_{k\to\infty}\Phi_{\widetilde{c}}(w^k)$.
\end{description}
\end{proposition}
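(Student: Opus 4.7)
The plan is to address the four assertions in order, leveraging Lemmas \ref{lemma-xbark}--\ref{lemma-potential}, Theorem \ref{theorem-sconverge}, and Corollary \ref{corollary-objF}.

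For part (i), the strategy is a boundedness-plus-last-coordinate-limit argument. Assumption \ref{ass3} gives $\{x^k\}$ bounded, Lemma \ref{lemma-bound} gives $\{(\overline{x}^k,L^k,\xi^k)\}$ bounded, and Corollary \ref{corollary-objF} gives $x^{k+1}-x^k\to 0$. Hence $\{w^k\}$ is bounded, so $W^*$ is nonempty and compact. Since the last coordinate of $w^k$ tends to zero, every cluster point of $\{w^k\}$ has $0$ as its last block, and $W^*=Z^*\times\{0\}$ is immediate.

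For part (ii), pick a subsequence $z^{k_j}\to z^*=(\overline{x}^*,x^*,L^*,\xi^*)$. Lemma \ref{lemma-xbark} forces $\overline{x}^*=x^*$. Because $\{x^{k_j}\}\subset\Gamma$ and $\Gamma$ is closed, $x^*\in\Gamma\subset\omega(x^0)$; Theorem \ref{theorem-sconverge} then gives $x^*\in X_{s}^*$. Finally, $\xi^{k_j}\in\partial(-\psi)(x^{k_j})$ together with the outer semicontinuity of $\partial(-\psi)$ (Remark \ref{remark-subdiff}) yields $\xi^*\in\partial(-\psi)(x^*)$.

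For part (iii), I will evaluate $\Phi$ at $z^*\in Z^*$ using the formula \eqref{Phi-def}. Since $G(x^*,x^*,L^*)=g(x^*)\le 0$, the indicator term vanishes. The inclusion $\xi^*\in\partial(-\psi)(x^*)\subset -\partial\psi(x^*)$ means $-\xi^*\in\partial\psi(x^*)$, so the Fenchel--Young equality gives $\psi^*(-\xi^*)=-\psi(x^*)-\langle \xi^*,x^*\rangle$. Substituting collapses $\Phi(z^*)$ to $f(x^*)+\phi(x^*)-\psi(x^*)=F(x^*)=\varpi^*$, where the last equality uses Theorem \ref{theorem-sconverge}. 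For $w\in W^*$, $w=(z^*,0)$ by (i), so $\Phi_{\widetilde{c}}(w)=\Phi(z^*)=\varpi^*$.

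Part (iv) is the step I expect to require the most care, because cluster-point values alone do not force convergence of the full sequence $\{\Phi(z^k)\}$ unless one rewrites $\Phi(z^k)$ in a form that is manifestly close to $F(x^k)$. The key trick is again the Fenchel--Young equality: because $\xi^k\in\partial(-\psi)(x^k)$, one has $\psi^*(-\xi^k)=-\psi(x^k)-\langle \xi^k,x^k\rangle$ for \emph{every} $k$, so
\begin{equation*}
\Phi(z^k)=f(\overline{x}^k)+\phi(\overline{x}^k)-\psi(x^k)+\langle \xi^k,\overline{x}^k-x^k\rangle.
\end{equation*}
Using Lemma \ref{lemma-xbark} ($\overline{x}^k-x^k\to 0$), the boundedness of $\{\xi^k\}$ from Lemma \ref{lemma-bound}, the continuity of $f$ and $\phi$ on the compact set $D$ from Lemma \ref{lemma-potential}, and $F(x^k)\to\varpi^*$ from Corollary \ref{corollary-objF}, I conclude $\Phi(z^k)-F(x^k)\to 0$ and hence $\Phi(z^k)\to\varpi^*$. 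The identity $\Phi_{\widetilde{c}}(w^k)=\Phi(z^k)+\widetilde{c}\|x^{k+1}-x^k\|^2$ together with $x^{k+1}-x^k\to 0$ finishes the argument.
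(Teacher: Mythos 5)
Your proposal is correct. Parts (i)--(iii) run essentially as in the paper: boundedness of $\{z^k\}$ plus $x^{k+1}-x^k\to 0$ for (i), Lemma \ref{lemma-xbark}, Theorem \ref{theorem-sconverge} and outer semicontinuity of $\partial(-\psi)$ for (ii), and the Fenchel--Young equality for (iii). One slip in (ii): the chain ``$x^*\in\Gamma\subset\omega(x^0)$'' is not a valid justification, since the feasible set is certainly not contained in the cluster-point set; what you actually need, $x^*\in\omega(x^0)$, is immediate because the second block of $z^{k_j}$ is $x^{k_j}$ and $x^{k_j}\to x^*$, so the closedness of $\Gamma$ plays no role here. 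Your part (iv) is where you genuinely depart from the paper, and your route works. The paper first derives the lower bound $\Phi_{\widetilde{c}}(w^k)\ge F(x^{k+1})\ge\varpi^*$ from Lemma \ref{lemma-potential} and then argues by contradiction along a convergent subsequence, using the Fenchel--Young identity only to pass $\psi^*(-\xi^k)\to\psi^*(-\xi^*)$ and the continuity of the indicator relative to its domain; this inequality \eqref{relationPhi} is also reused later in the KL-based full-convergence proof, which is partly why the paper sets things up that way. You instead apply Fenchel--Young at every $k$ to rewrite $\Phi(z^k)=f(\overline{x}^k)+\phi(\overline{x}^k)-\psi(x^k)+\langle\xi^k,\overline{x}^k-x^k\rangle$ (the indicator vanishes since $G(\overline{x}^k,x^k,L^k)\le 0$), and then conclude $\Phi(z^k)-F(x^k)\to 0$ from $\overline{x}^k-x^k\to 0$, the boundedness of $\{\xi^k\}$ (Lemma \ref{lemma-bound}), and uniform continuity of $f$ and $\phi$ on the compact set $D$ of Lemma \ref{lemma-potential}; together with $F(x^k)\to\varpi^*$ this gives both limits directly and avoids any contradiction argument or appeal to the lower bound. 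This is a cleaner and somewhat more elementary derivation of (iv) itself; the only thing it does not deliver, and which the paper's version carries along for later use, is the one-sided estimate $\Phi_{\widetilde{c}}(w^k)-\varpi^*\ge F(x^{k+1})-\varpi^*\ge 0$.
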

\begin{proof}
 {\bf(i)-(ii)} By Lemma \ref{lemma-bound}, the sequence $\{z^k\}_{k\in\mathbb{N}}$ is bounded, so the set $Z^*$ is nonempty and compact. Recall that $w^k=(z^k,x^{k+1}-x^k)$ for all $k\in\mathbb{N}$. Item (i) then follows Corollary \ref{corollary-objF}. Pick any $z^*=(\overline{x}^*,x^*,L^*,\xi^*)\in Z^*$. There exists an index set $\mathcal{K}\subset\mathbb{N}$ such that $z^*=\lim_{\mathcal{K}\ni k\to\infty}z^k$. Then, $\overline{x}^*=x^*\in X_{s}^*$ by Lemma \ref{lemma-xbark} and Theorem \ref{theorem-sconverge}. Recall that $\xi^k\in\partial(-\psi)(x^k)$ for each $k\in\mathbb{N}$ and the mapping $\partial(-\psi)\!:\mathbb{R}^n\rightrightarrows\mathbb{R}^n$ is osc. Hence, $\xi^*\in\partial (-\psi)(x^*)$.

 \noindent
 {\bf(iii)} Pick any $z=(\overline{x},x,L,\xi)\in Z^*$. From item (ii), $\overline{x}=x\in X_{s}^*$ and $\xi\in\partial (-\psi)(x)\subset-\partial\psi(x)$. The latter implies $\psi(x)+\psi^*(-\xi)=-\langle x,\xi\rangle=-\langle\overline{x},\xi\rangle$ by \cite[Theorem 23.5]{Roc70}. Consequently, 
 \[
  \Phi(z)=f(\overline{x})+\phi(\overline{x})+\langle \overline{x},\xi\rangle+\psi^*(-\xi)=f(x)+\phi(x)-\psi(x)=F(x)=\varpi^*, 
 \]
 where the fourth equality is due to Theorem \ref{theorem-sconverge} and $x\in\omega(x^0)$. For any $w=(z,d)\in W^*$, from item (i) and the definition of $\Phi_{\widetilde{c}}$, we have $z\in Z^*$ and $\Phi_{\widetilde{c}}(w)=\Phi(z)$. 
	
 \noindent
 {\bf(iv)} From the expression of $\Phi_{\widetilde{c}}$ in \eqref{Phic-def} and Lemma \ref{lemma-potential}, for each $k\ge\overline{k}$, it holds that
 \begin{align}\label{relationPhi}
 \Phi_{\widetilde{c}}(w^k)-\varpi^*
 &=\Phi(z^k)+\widetilde{c}\|x^{k+1}-x^k\|^2-\varpi^*\nonumber\\
 &=f(\overline{x}^k)+\phi(\overline{x}^k)+\langle \overline{x}^k,\xi^k\rangle+\psi^*(-\xi^k)+\widetilde{c}\|x^{k+1}-x^k\|^2-\varpi^*\nonumber\\
 &\ge F(x^{k+1})-\varpi^*\ge0.
 \end{align}  
 Suppose on the contrary that $\lim_{k\to\infty}\Phi_{\widetilde{c}}(w^k)\ne\varpi^*$. From \eqref{relationPhi} and the boundedness of $\{w^k\}_{k\in\mathbb{N}}$, there exists an index set $\mathcal{K}\subset\mathbb{N}$ such that 
 $\limsup_{\mathcal{K}\ni k\to\infty}\Phi_{\widetilde{c}}(w^k)>\varpi^*$ and $\lim_{\mathcal{K}\ni k\to\infty}w^k=w^*\in W^*$. Along with $\psi(x^k)+\psi^*(-\xi^k)=-\langle x^k,\xi^k\rangle$ for each $k\in\mathbb{N}$, it follows $\lim_{\mathcal{K}\ni k\to\infty }\psi^*(-\xi^k)=-\psi(x^*)-\langle x^*,\xi^*\rangle=\psi^*(-\xi^*)$. Note that $\delta_{G^{-1}(\mathbb{R}_{-}^m)}(\cdot,\cdot,\cdot)$ is continuous relative to its domain and $\{w^k\}_{k\in\mathbb{N}}\subset{\rm dom}\,\Phi$. Then, from the expression of $\Phi_{\widetilde{c}}$,  
 \[
  \Phi_{\widetilde{c}}(w^*)=\lim_{\mathcal{K}\ni k\to\infty}\Phi_{\widetilde{c}}(w^k)=\limsup_{\mathcal{K}\ni k\to\infty}\Phi_{\widetilde{c}}(w^k)>\varpi^*,
 \]
 which is a contradiction to $\Phi_{\widetilde{c}}(w^*)=\varpi^*$ by item (iii). Thus, $\lim_{k\to\infty}\Phi_{\widetilde{c}}(w^k)=\varpi^*$. Together with \eqref{relationPhi} and Corollary \ref{corollary-objF}, we have $\lim_{k\to\infty}\Phi(z^k)=\varpi^*$. The proof is finished. 
\end{proof}

 To achieve the full convergence of the sequence $\{x^k\}_{k\in\mathbb{N}}$, we also need to measure the approximate stationarity of its augmented sequence $\{w^k\}_{k\in\mathbb{N}}$ by $\|x^{k+1}\!-x^k\|^2$.  
\begin{proposition}\label{prop-relgap}
 Suppose that Assumptions \ref{ass2}-\ref{ass4} hold, and that the mapping $g$ is also twice differentiable. Then, there exists $\gamma>0$ such that ${\rm dist}(0,\widehat{\partial}\Phi_{\widetilde{c}}(w^k))\le\gamma\|x^{k+1}-x^k\|$ for all $k\ge\overline{k}$, where $\overline{k}\in\mathbb{N}$ is the same as in Lemma \ref{lemma-potential}.
\end{proposition}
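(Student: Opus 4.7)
The plan is to exhibit an explicit element of $\widehat{\partial}\Phi_{\widetilde{c}}(w^k)$ whose norm is controlled by $\|x^{k+1}-x^k\|$. Since $\Phi_{\widetilde{c}}$ splits as the sum of a smooth piece $f(x)+\langle x,\xi\rangle+\widetilde{c}\|d\|^2$, two convex terms $\phi(x)$ and $\psi^*(-\xi)$, and the indicator $\delta_{G^{-1}(\mathbb{R}_{-}^m)}(x,s,L)$, and since the inclusion $\widehat{\partial}h_1(\cdot)+\widehat{\partial}h_2(\cdot)\subset\widehat{\partial}(h_1+h_2)(\cdot)$ holds with no qualification condition, it suffices to pick one regular subgradient from each summand and add. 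For the indicator term, I shall invoke the elementary fact that whenever $H$ is $\mathcal{C}^1$ and $D$ is closed convex, any $\lambda\in\mathcal{N}_D(H(y_0))$ satisfies $\nabla H(y_0)\lambda\in\widehat{\mathcal{N}}_{H^{-1}(D)}(y_0)$; this follows from $\langle \lambda,H(y)-H(y_0)\rangle\le 0$ for $y\in H^{-1}(D)$ and the first-order Taylor expansion of $H$. Twice-differentiability of $g$ is exactly what makes $G$ of class $\mathcal{C}^1$ in all its arguments, so this inclusion applies here.

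The ingredients are: the multiplier $\overline{\lambda}^k\in\mathcal{M}(u^k,\overline{x}^k)$ from Proposition \ref{prop-lambark} together with the associated $\overline{v}^k\in\partial\phi(\overline{x}^k)$ provided by the KKT system \eqref{KKT1-subprob}--\eqref{KKT2-subprob} for $\overline{x}^k$, and the relation $x^k\in\partial\psi^*(-\xi^k)$, which is the Fenchel-Moreau dual of $-\xi^k\in\partial\psi(x^k)$. Adding the gradient of the smooth piece at $(\overline{x}^k,\xi^k,x^{k+1}\!-x^k)$ produces the candidate $\eta^k=(\eta^k_x,\eta^k_s,\eta^k_L,\eta^k_\xi,\eta^k_d)$ with
\begin{align*}
\eta^k_x&=\nabla\!f(\overline{x}^k)+\xi^k+\overline{v}^k+\nabla_{\!x}G(\overline{x}^k,x^k,L^k)\overline{\lambda}^k,\\
\eta^k_s&=\nabla_{\!s}G(\overline{x}^k,x^k,L^k)\overline{\lambda}^k,\qquad\eta^k_L=\nabla_{\!L}G(\overline{x}^k,x^k,L^k)\overline{\lambda}^k,\\
\eta^k_\xi&=\overline{x}^k-x^k,\qquad\eta^k_d=2\widetilde{c}(x^{k+1}-x^k),
\end{align*}
and $\eta^k\in\widehat{\partial}\Phi_{\widetilde{c}}(w^k)$ by the sum rule just discussed.

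The remaining step is to bound each block. Substituting the KKT identity $\nabla\vartheta_k(\overline{x}^k)+\overline{v}^k+\nabla_{\!x}G(\overline{x}^k,x^k,L^k)\overline{\lambda}^k=0$ together with $\nabla\vartheta_k(\overline{x}^k)=\nabla\!f(x^k)+\xi^k+\mathcal{Q}_k(\overline{x}^k-x^k)$ into $\eta^k_x$ reduces it to $(\nabla\!f(\overline{x}^k)-\nabla\!f(x^k))-\mathcal{Q}_k(\overline{x}^k-x^k)$, so $\|\eta^k_x\|\le(L_{\!f}+\beta_{\mathcal{Q}})\|\overline{x}^k-x^k\|$. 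Direct differentiation of \eqref{def-Gmap} gives $\nabla_{\!s}G_i=(\nabla^2g_i(x^k)-L_i^kI)(\overline{x}^k-x^k)$ and $\nabla_{\!L}G_i=\tfrac{1}{2}\|\overline{x}^k-x^k\|^2e_i$; combined with the uniform bounds on $\{L^k\}$ from Lemma \ref{lemma-bound}, $\{\overline{\lambda}^k\}$ from Proposition \ref{prop-lambark}, and the continuity of each $\nabla^2 g_i$ on the compact set $D$, this yields $\|\eta^k_s\|=O(\|\overline{x}^k-x^k\|)$ and $\|\eta^k_L\|=O(\|\overline{x}^k-x^k\|^2)$, while $\|\eta^k_\xi\|=\|\overline{x}^k-x^k\|$ and $\|\eta^k_d\|=2\widetilde{c}\|x^{k+1}-x^k\|$ are immediate. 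Proposition \ref{prop-eboundk} together with the triangle inequality gives $\|\overline{x}^k-x^k\|\le(1+\widehat{\gamma}\sqrt{\beta_{R}/2})\|x^{k+1}-x^k\|$, and the quadratic term in $\eta^k_L$ is absorbed using the boundedness of $\{\|x^{k+1}-x^k\|\}$ implied by Assumption \ref{ass3}. A uniform $\gamma>0$ follows.

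The delicate point is the passage from the multiplier $\overline{\lambda}^k$ to a \emph{regular} normal of $G^{-1}(\mathbb{R}_{-}^m)$: Assumption \ref{ass2} and the calculus rule of \cite[Page 211]{Ioffe09} only supply the opposite (limiting) inclusion $\mathcal{N}_{G^{-1}(\mathbb{R}_{-}^m)}\subset\nabla G\,\mathcal{N}_{\mathbb{R}_{-}^m}$, whereas we need the inner inclusion into $\widehat{\mathcal{N}}$. Fortunately, as noted in the first paragraph, the inner inclusion requires no qualification at all and rests only on $\mathcal{C}^1$ smoothness of $G$, which is precisely why the proposition's hypothesis strengthens Assumption \ref{ass1}(i) by additionally requiring $g$ to be twice differentiable.
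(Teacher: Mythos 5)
Your proposal is correct and follows essentially the same route as the paper: you build the same explicit regular subgradient that the paper calls $\zeta^k$ (your $\eta^k$), using the multiplier $\overline{\lambda}^k$ from Proposition \ref{prop-lambark}, the KKT identity for $\overline{x}^k$ to simplify the $x$-block, the inclusion $\nabla G\,\mathcal{N}_{\mathbb{R}_{-}^m}(G(\cdot))\subset\widehat{\mathcal{N}}_{G^{-1}(\mathbb{R}_{-}^m)}(\cdot)$ (which the paper obtains via the polar of a tangent-cone inclusion and you obtain by the equivalent first-order Taylor argument, both CQ-free), $x^k\in\partial\psi^*(-\xi^k)$, and finally Proposition \ref{prop-eboundk} to convert $\|\overline{x}^k-x^k\|$ into $\|x^{k+1}-x^k\|$. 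One small repair: you justify the bound on the $s$-block by the \emph{continuity} of $\nabla^2 g_i$ on $D$, but the hypotheses only give twice differentiability (not $\mathcal{C}^2$, nor $G\in\mathcal{C}^1$); the needed boundedness of $\{\nabla^2 g_i(x^k)\}_k$ instead follows from Assumption \ref{ass1}(i), since $\nabla g_i$ is strictly continuous and hence $\nabla^2 g_i(x^k)$ lies in the locally bounded Clarke Jacobian $\partial_C\nabla g_i(x^k)$ (this is exactly how the paper argues), and only differentiability of $G$ at the base point is needed for your normal-cone inclusion.
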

\begin{proof}
 First of all, we claim that at any $z=((x,s,L),\xi)\in G^{-1}(\mathbb{R}_{-}^m)\times\mathbb{R}^n$, it holds 
 \begin{equation}\label{aim-inclusion}
 \widehat{\partial}\Phi(z)\supset\!\begin{pmatrix}
  [\nabla\!f(x)+\xi\!+\!\partial\phi(x)]\times
  \{0_{\mathbb{R}^n}\}\times\{0_{\mathbb{R}^m}\}
 +\nabla G(x,s,L)\mathcal{N}_{\mathbb{R}_{-}^m}(G(x,s,L))\\
  x-\partial\psi^*(-\xi)
 \end{pmatrix}.
 \end{equation}
 Fix any $z=((x,s,L),\xi)\in G^{-1}(\mathbb{R}_{-}^m)\times\mathbb{R}^n$. Let $\Phi_1(z')\!:=\delta_{G^{-1}(\mathbb{R}_{-}^m)}(x',s',L')$ and $\Phi_2(z')\!:=f(x')+\!\phi(x')+\!\langle x',\xi'\rangle+\psi^*(-\xi')$ for $z'\!=(x',s',L',\xi')\in\!\mathbb{Z}$ with $L'\in\mathbb{R}_{++}^m$. Apparently, $\Phi=\Phi_1+\Phi_2$. Then, it holds  $\widehat{\partial}\Phi(z)\supset\widehat{\partial}\Phi_1(z)+\widehat{\partial}\Phi_2(z)$. By the definition of tangent cones (see \cite[Definition 6.1]{RW98}), it is not hard to obtain
 \[
 \mathcal{T}_{G^{-1}(\mathbb{R}_{-}^m)}(x,s,L)\subset[G'(x,s,L)]^{-1}\mathcal{T}_{\mathbb{R}_{-}^m}(G(x,s,L)).
 \]
 Recall that $\widehat{\mathcal{N}}_{G^{-1}(\mathbb{R}_{-}^m)}(x,s,L)=[\mathcal{T}_{G^{-1}(\mathbb{R}_{-}^m)}(x,s,L)]^{\circ}$. The following inclusion holds:
 \[
  \widehat{\partial}\Phi_1(z)=\widehat{\mathcal{N}}_{G^{-1}(\mathbb{R}_{-}^m)}(x,s,L)\times\{0_{\mathbb{R}^n}\}\supset \nabla G(x,s,L)\mathcal{N}_{\mathbb{R}_{-}^m}(G(x,s,L))\times\{0_{\mathbb{R}^n}\}.
\]
 In addition, from Assumption \ref{ass1} and \cite[Corollary 10.9]{RW98}, it is immediate to obtain 
 \[
 \widehat{\partial}\Phi_2(z)=\partial\Phi_2(z)=[\nabla\!f(x)+\xi+\partial\phi(x)]\times\{0_{\mathbb{R}^n}\}\times\{0_{\mathbb{R}^m}\}\times[x-\partial\psi^*(-\xi)].
 \]
 The claimed \eqref{aim-inclusion} follows the above two equations and the inclusion $\widehat{\partial}\Phi(z)\supset\widehat{\partial}\Phi_1(z)+\widehat{\partial}\Phi_2(z)$.

 Fix any $k\ge\overline{k}$. By the definition of $G$ in equation \eqref{def-Gmap}, for any $\lambda\in\mathbb{R}^m$, we calculate that
 \begin{equation}\label{temp1-gradG}
 \nabla G(\overline{x}^k,x^k,L^k)\lambda
 =\begin{pmatrix}
  \nabla g(x^k)\lambda+\langle L^k,\lambda\rangle(\overline{x}^k\!-\!x^k)\\
  \sum_{i=1}^m[\lambda_i\nabla^2g_i(x^k)(\overline{x}^k\!-\!x^k)]+\langle L^k,\lambda\rangle(x^k\!-\!\overline{x}^k)\\
  \frac{1}{2}\|\overline{x}^k-x^k\|^2\lambda
  \end{pmatrix}.
 \end{equation} 
 Let $\overline{\lambda}^k\in\mathcal{M}(u^k,\overline{x}^k)$ for $u^k\!=(L^k,\mathcal{Q}_k,\xi^k,x^k)\!\in\mathbb{U}$ be the same as in Proposition \ref{prop-lambark}. By the expression of $\mathcal{M}$ in \eqref{para-multiplier} with $\theta$ and $H$ from \eqref{theta-def1}-\eqref{h-def1}, there exists $\overline{v}^k\in\partial\phi(\overline{x}^k)$ such that 
\begin{subnumcases}{}\label{subkkt1}
\nabla\!f(x^k)+\mathcal{Q}_k(\overline{x}^{k}\!-\!x^k)+\xi^k+\overline{v}^k+\nabla g(x^k)\overline{\lambda}^{k}+\langle L^k,\overline{\lambda}^k\rangle(\overline{x}^{k}\!-\!x^k)=0,\\
 \label{subkkt2}
 \overline{\lambda}^k\in\mathcal{N}_{\mathbb{R}_{-}^m}(G(\overline{x}^k,x^k,L^k)).
\end{subnumcases}
By \eqref{subkkt1}, we have
$\nabla g(x^k)\overline{\lambda}^{k}+\langle L^k,\overline{\lambda}^{k}\rangle(\overline{x}^{k}\!-\!x^k)=-\nabla\!f(x^k)-\mathcal{Q}_k(\overline{x}^{k}\!-\!x^k)-\xi^k-\overline{v}^k$. Along with the expression of $\nabla G(\overline{x}^k,x^k,L^k)\overline{\lambda}^k$ in equation \eqref{temp1-gradG}, it follows that
 \begin{align*}
 \nabla G(\overline{x}^k,x^k,L^k)\overline{\lambda}^k=\begin{pmatrix}
  -\nabla\!f(x^k)-\mathcal{Q}_k(\overline{x}^{k}\!-\!x^k)-\xi^k-\overline{v}^k\\
  \sum_{i=1}^{m}\overline{\lambda}^{k}_i\nabla^2g_i(x^k)(\overline{x}^{k}\!-\!x^k)+\langle L^k,\overline{\lambda}^k\rangle(x^k\!-\!\overline{x}^{k})\\
  \frac{1}{2}\|x^k-\overline{x}^{k}\|^2\overline{\lambda}^{k}
 \end{pmatrix}.
 \end{align*} 
 Comparing with the inclusion \eqref{aim-inclusion} for $z=z^k$ and using $x^k\in\partial\psi^*(-\xi^k)$ and \eqref{subkkt2}, we have  
 \begin{equation}\label{dist-zetak}
 \zeta^k:=\!\begin{pmatrix}
  \nabla\!f(\overline{x}^{k})-\nabla\!f(x^k)-\mathcal{Q}_k(\overline{x}^{k}\!-\!x^k)\\
  \sum_{i=1}^{m}\overline{\lambda}^{k}_i\nabla^2g_i(x^k)(\overline{x}^{k}\!-\!x^k)+\langle L^k,\overline{\lambda}^k\rangle(x^k\!-\!\overline{x}^{k})\\
  \frac{1}{2}\|x^k-\overline{x}^{k}\|^2\overline{\lambda}^{k}\\
  \overline{x}^{k}-x^k\end{pmatrix}
  \in\widehat{\partial}\Phi(z^k).
 \end{equation}
 Notice that $\widehat{\partial}\Phi_{\widetilde{c}}(w^k)=\widehat{\partial}\Phi(z^k)\times\{2\widetilde{c}(x^{k+1}\!-\!x^k)\}$ for each $k\ge\overline{k}$. 
 It suffices to claim that there exists $\gamma>0$ such that $\|\zeta^k\|\le \gamma\|x^{k+1}-x^k\|$ for all $k\ge\overline{k}$. Indeed, for each $k\ge\overline{k}$, 
 \begin{align*}
  \|\nabla\!f(\overline{x}^{k})-\nabla\!f(x^{k})-\mathcal{Q}_k(\overline{x}^k-x^k)\|
  &\le\|\nabla\!f(\overline{x}^{k})-\nabla\!f(x^k)\|+\beta_{\mathcal{Q}}\|\overline{x}^{k}-x^k\|\\
  &\stackrel{\eqref{flip-D}}{\le} (L_{\!f}\!+\!\beta_{\mathcal{Q}})\|\overline{x}^{k}-x^k\|.
 \end{align*}
 Next we claim that for each $i\in[m]$ the sequence $\{\nabla^2\!g_i(x^k)\}_{k\ge\overline{k}}$ is bounded. Fix any $i\in[m]$. Since the mapping $\nabla\!g_i$ is differentiable, for each $k\in\mathbb{N}$, $\nabla^2g_i(x^k)\in\partial_{C}\nabla\!g_i(x^k)$, where $\partial_{C}\nabla\!g_i(x^k)$ denotes the Clarke Jacobian of $\nabla\!g_i$ at $x^k$. Since the mapping  $\partial_{C}\nabla\!g_i\!:\mathbb{R}^n\rightrightarrows\mathbb{R}^n$ is locally bounded, and the sequence $\{x^k\}_{k\in\mathbb{N}}$ is bounded, from \cite[Proposition 5.15]{RW98}, there exists $\gamma_0>0$ such that $\|\nabla^2g_i(x^k)\|\le\gamma_0$ for all $k\in\mathbb{N}$. In addition, the sequence $\{\overline{\lambda}^k\}_{k\ge\overline{k}}$ is bounded due to Proposition \ref{prop-lambark}. Thus, there exists $\gamma_1>0$ such that for all $k\ge\overline{k}$, 
 \begin{equation*} \big\|\textstyle{\sum_{i=1}^{m}}\overline{\lambda}^{k}_i\nabla^2g_i(x^k)(\overline{x}^{k}-x^k)+\langle L^k,\overline{\lambda}^k\rangle(x^k-\overline{x}^{k})\big\|\le \gamma_1\|\overline{x}^{k}-x^k\|.
\end{equation*}
Combining the above two inequalities with the expression of $\zeta^k$ in \eqref{dist-zetak}, we obtain
\[
  \|\zeta^k\|\le (L_{\!f}+\beta_{\mathcal{Q}}+\gamma_1)\|\overline{x}^{k}-x^k\|+\frac{1}{2}\|\overline{\lambda}^{k}\|\|\overline{x}^{k}-x^k\|^2+\|\overline{x}^{k}-x^k\|\quad\forall k\ge\overline{k}.
\]
Recall that $\lim_{k\to\infty}\|\overline{x}^{k}-x^k\|=0$ by Lemma \ref{lemma-xbark}, if necessary by increasing $\overline{k}$, $\|\overline{x}^k-x^k\|\le1$ for all $k\ge\overline{k}$. While by Proposition \ref{prop-eboundk}, for each $k\ge\overline{k}$, $\|\overline{x}^{k}-x^k\|\le\|\overline{x}^{k}-x^{k+1}\|+\|x^{k+1}-x^k\|\le(\widehat{\gamma}\sqrt{\beta_r/2}+1)\|x^{k+1}-x^k\|$.  Thus, from the above inequality, there exists $\gamma>0$ such that $\|\zeta^k\|\le\gamma\|x^{k+1}-x^k\|$ for all $k\ge\overline{k}$.  
\end{proof}

Proposition \ref{prop-relgap} provides a relative inexact optimality condition for minimizing $\Phi_{\widetilde{c}}$ with the sequence $\{w^k\}_{k\in\mathbb{N}}$, but the sufficient decrease of $\{\Phi_{\widetilde{c}}(w^k)\}_{k\in\mathbb{N}}$ is unavailable. Thus, we cannot apply directly the recipe developed in \cite{Attouch13,Bolte14} to get the convergence of $\{w^k\}_{k\in\mathbb{N}}$. Next we establish the full convergence of $\{x^k\}_{k\in\mathbb{N}}$ by combining the KL inequality on the function $\Phi_{\widetilde{c}}$ and the decrease of $\{F(x^k)\}_{k\in\mathbb{N}}$ skillfully.
\begin{theorem}\label{globalconv}
 Suppose that Assumptions \ref{ass2}-\ref{ass4} hold, that the mapping $g$ is also twice differentiable, and that $\Phi_{\widetilde{c}}$ has the KL property on $W^*$. Then, $\sum_{k=0}^{\infty}\|x^{k+1}\!-x^k\|<\infty$, and consequently the sequence $\{x^k\}_{k\in\mathbb{N}}$ converges to some $x^*\in\omega(x^0)\subset X_{s}^*$.
\end{theorem}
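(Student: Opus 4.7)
The strategy is to exploit the uniformized KL property of $\Phi_{\widetilde{c}}$ on the compact cluster set $W^{*}$ together with the relative subgradient bound of Proposition~\ref{prop-relgap}, and then compensate for the missing monotone decrease of $\{\Phi_{\widetilde{c}}(w^k)\}$ by transferring the KL estimate onto the truly decreasing sequence $\{F(x^k)\}$ via the sandwich $\Phi_{\widetilde{c}}(w^k)\ge F(x^{k+1})\ge\varpi^{*}$ supplied by Lemma~\ref{lemma-potential}. Throughout, write $a_k:=\|x^{k+1}-x^k\|$. If $F(x^{k_0})=\varpi^{*}$ for some $k_0$, the monotone decrease and the inequality $F(x^{k+1})\le F(x^k)-\tfrac{\alpha}{2}a_k^2$ from Corollary~\ref{corollary-objF} force $a_k=0$ for all $k\ge k_0$ and the claim is immediate; so I may assume $F(x^k)>\varpi^{*}$ for every $k$, which together with Lemma~\ref{lemma-potential} guarantees $\Phi_{\widetilde{c}}(w^k)>\varpi^{*}$ as well.

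First, I would invoke a uniform KL inequality over $W^{*}$. By Proposition~\ref{prop-ZWstar}, $W^{*}=Z^{*}\times\{0\}$ is nonempty and compact with $\Phi_{\widetilde{c}}\equiv\varpi^{*}$ on $W^{*}$, $\Phi_{\widetilde{c}}(w^k)\to\varpi^{*}$, and ${\rm dist}(w^k,W^{*})\to 0$. A standard Bolte--Sabach--Teboulle uniformization then provides $\varepsilon,\eta>0$ and a single $\varphi\in\Upsilon_{\!\eta}$ such that, for every $k\ge N$ with $N\ge\overline{k}$ sufficiently large, $\varphi'(\Phi_{\widetilde{c}}(w^k)-\varpi^{*})\,{\rm dist}(0,\partial\Phi_{\widetilde{c}}(w^k))\ge 1$. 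Combined with $\widehat{\partial}\Phi_{\widetilde{c}}(w^k)\subset\partial\Phi_{\widetilde{c}}(w^k)$ and Proposition~\ref{prop-relgap}, this yields $\gamma\,a_k\,\varphi'(\Phi_{\widetilde{c}}(w^k)-\varpi^{*})\ge 1$. Since $\varphi'$ is weakly decreasing on $(0,\eta)$ (concavity of $\varphi$) and $\Phi_{\widetilde{c}}(w^k)-\varpi^{*}\ge F(x^{k+1})-\varpi^{*}>0$, this transfers to the pointwise bound
\[
\varphi'\bigl(F(x^{k+1})-\varpi^{*}\bigr)\;\ge\;\varphi'\bigl(\Phi_{\widetilde{c}}(w^k)-\varpi^{*}\bigr)\;\ge\;\frac{1}{\gamma\,a_k};
\]
the domain condition $F(x^{k+1})-\varpi^{*}<\eta$ is automatic from $F(x^{k+1})-\varpi^{*}\le\Phi_{\widetilde{c}}(w^k)-\varpi^{*}<\eta$ inside the KL region, and $a_k>0$ for all $k\ge N$, because $a_k=0$ would, via Proposition~\ref{prop-relgap} and the KL inequality, force $0\in\partial\Phi_{\widetilde{c}}(w^k)$ with $\Phi_{\widetilde{c}}(w^k)>\varpi^{*}$, contradicting the KL estimate.

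Next, the concavity inequality $\varphi(s)-\varphi(t)\ge\varphi'(s)(s-t)$ applied at $s=F(x^{k+1})-\varpi^{*}\ge t=F(x^{k+2})-\varpi^{*}$, combined with $F(x^{k+1})-F(x^{k+2})\ge\tfrac{\alpha}{2}a_{k+1}^{2}$ from Corollary~\ref{corollary-objF}, gives, with $b_k:=\varphi(F(x^k)-\varpi^{*})$,
\[
b_{k+1}-b_{k+2}\;\ge\;\varphi'\bigl(F(x^{k+1})-\varpi^{*}\bigr)\cdot\tfrac{\alpha}{2}\,a_{k+1}^{2}\;\ge\;\frac{\alpha}{2\gamma}\cdot\frac{a_{k+1}^{2}}{a_k}.
\]
The elementary inequality $a_{k+1}^{2}/a_k\ge 2a_{k+1}-a_k$ (from $(a_{k+1}-a_k)^2\ge 0$) then yields $2a_{k+1}\le a_k+(2\gamma/\alpha)(b_{k+1}-b_{k+2})$, and summing from $k=N$ to $k=K$ telescopes the $b$-differences to
\[
\sum_{k=N}^{K}a_k\;+\;2a_{K+1}\;\le\;2a_N\;+\;\frac{2\gamma}{\alpha}\,\varphi\bigl(F(x^{N+1})-\varpi^{*}\bigr),
\]
a bound independent of $K$. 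Letting $K\to\infty$ gives $\sum_{k}a_k<\infty$, so $\{x^k\}$ is Cauchy and converges to some $x^{*}\in\omega(x^0)$; Theorem~\ref{theorem-sconverge} then places $x^{*}\in X_{s}^{*}$.

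The principal obstacle is that the usual three-ingredient KL-descent recipe presupposes sufficient decrease of the potential along iterates, whereas here only $\{F(x^k)\}$ decreases monotonically. The workaround is to transfer the KL bound at $w^k$ into a pointwise lower bound on $\varphi'(F(x^{k+1})-\varpi^{*})$ by exploiting the monotonicity of $\varphi'$ together with the one-sided sandwich of Lemma~\ref{lemma-potential}, and then to convert the quadratic-over-linear ratio $a_{k+1}^{2}/a_k$ into the telescoping first-order increment $2a_{k+1}-a_k$ via AM--GM. Getting the indices right so that the $b$-terms collapse cleanly, and verifying that the range in which $\varphi'$ is invoked stays inside $(0,\eta)$, are the delicate points; once these are settled, everything else is a formal assembly of the previously established ingredients.
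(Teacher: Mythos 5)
Your proposal is correct and follows essentially the same route as the paper's proof: the uniformized KL inequality on the compact set $W^*$, the subgradient bound of Proposition~\ref{prop-relgap}, the transfer to the monotone sequence $\{F(x^k)\}$ via the sandwich $\varpi^*\le F(x^{k+1})\le\Phi_{\widetilde{c}}(w^k)$ and the monotonicity of $\varphi'$, followed by concavity, AM--GM and telescoping. The only (harmless) differences are cosmetic: you dispose of the degenerate case via $F(x^{k_0})=\varpi^*$ and justify $\|x^{k+1}-x^k\|>0$ by a KL contradiction, whereas the paper reduces to strict decrease using Remark~\ref{remark-alg}(d), and your indices are shifted by one.
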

\begin{proof}
 If there exists some $k_0\in\mathbb{N}$ such that $F(x^{k_0})=F(x^{k_0+1})$, then $x^{k_0+1}=x^{k_0}$ follows Corollary \ref{corollary-objF}. By Remark \ref{remark-alg} (d), Algorithm \ref{iMBA} finds a strong stationary point within a finite number of steps. Hence, it suffices to consider that $F(x^k)>F(x^{k+1})>\varpi^*$ for all $k\in\mathbb{N}$. From \eqref{relationPhi} it follows $\Phi_{\widetilde{c}}(w^k)-\varpi^*>0$ for all $k\ge\overline{k}$, where $\overline{k}$ is the same as in Lemma \ref{lemma-potential}. By Proposition \ref{prop-ZWstar}, the set $W^*$ is nonempty and compact, and $\Phi_{\widetilde{c}}(w)=\varpi^*$ for all $w\in W^*$. From the KL property of $\Phi_{\widetilde{c}}$ on $W^*$ and \cite[Lemma 6]{Bolte14}, there exist $\varepsilon>0,\eta>0$ and $\varphi\in\Upsilon_{\!\eta}$ such that for all $w\in[\varpi^*<\Phi_{\widetilde{c}}<\varpi^*+\eta]\cap\mathfrak{B}(W^*,\varepsilon)$ with $\mathfrak{B}(W^*,\varepsilon)\!:=\big\{w\in\mathbb{W}\,|\,{\rm dist}(w,W^*)\le\varepsilon\big\}$,
 \[
  \varphi'(\Phi_{\widetilde{c}}(w)-\varpi^*){\rm dist}(0,\partial\Phi_{\widetilde{c}}(w))\ge 1.
 \]
 Recall that $\lim_{k\to\infty}\Phi_{\widetilde{c}}(w^k)=\varpi^*$ by Proposition \ref{prop-ZWstar} (iv) and $\lim_{k\to\infty}{\rm dist}(w^k,W^*)=0$. There exists $\mathbb{N}\ni\widehat{k}_1\ge\overline{k}$ such that for all $k\ge \widehat{k}_1$, $w^k\in[\varpi^*<\Phi_{\widetilde{c}}<\varpi^*\!+\!\eta]\cap\mathfrak{B}(W^*,\varepsilon)$, so 
 \[
  \varphi'\big(\Phi_{\widetilde{c}}(w^k)\!-\!\varpi^*\big){\rm dist}(0,\widehat{\partial}\Phi_{\widetilde{c}}(w^k))\ge\varphi'\big(\Phi_{\widetilde{c}}(w^k)\!-\!\varpi^*\big){\rm dist}(0,\partial\Phi_{\widetilde{c}}(w^k))\geq 1. 
 \]
 Together with Proposition \ref{prop-relgap} and $\varphi\in\Upsilon_{\eta}$, it follows that for all $k\ge\widehat{k}_1$, 
 \begin{equation*}
 \gamma\varphi'\big(\Phi_{\widetilde{c}}(w^{k-1})-\varpi^*\big)\|x^k-x^{k-1}\|\ge 1.
 \end{equation*}
 Since $\varphi'$ is nonincreasing on $(0,\eta)$, the above inequality along with \eqref{relationPhi} implies that
 \begin{equation}\label{equa1-later}
 \varphi'(F(x^k)-\varpi^*)\ge\varphi'(\Phi_{\widetilde{c}}(w^{k-1})-\varpi^*)
  \ge\frac{1}{\gamma\|x^k-x^{k-1}\|}\quad \forall k\ge\widehat{k}_1.
 \end{equation}
 Now using inequality \eqref{equa1-later} and the sufficient decrease of $\{F(x^k)\}_{k\in\mathbb{N}}$ in \eqref{F-decrease} and following the same arguments as those in \cite[Theorem 1 (i)]{Bolte14} leads to the conclusion. For completeness, we include the details. Invoking the concavity of $\varphi$ and inequality \eqref{F-decrease} leads to
 \begin{align*}
 \Delta_{k,k+1}&:=\varphi(F(x^k)-\varpi^*)-\varphi(F(x^{k+1})-\varpi^*)
  \ge\varphi'(F(x^k)-\varpi^*)(F(x^{k})\!-\!F(x^{k+1}))\\
  &\stackrel{\eqref{equa1-later}}{\ge}\frac{F(x^{k})\!-\!F(x^{k+1})}{\gamma\|x^k-x^{k-1}\|}
  \ge\frac{\alpha\|x^{k+1}-x^k\|^2}{2\gamma\|x^k-x^{k-1}\|}\quad\forall k\ge\widehat{k}_1.
 \end{align*}
 Then, $\|x^{k+1}\!-\!x^k\|\le\sqrt{{2\gamma}\alpha^{-1}\Delta_{k,k+1}\|x^k\!-\!x^{k-1}\|}$ for all $k\ge\widehat{k}_1$. Along with $2\sqrt{ab}\le a+b$ for $a\ge 0,b\ge 0$, we have 
 $2\|x^{k+1}\!-\!x^k\|\le 2\alpha^{-1}\gamma\Delta_{k,k+1}+\|x^k\!-\!x^{k-1}\|$ for all $k\ge\widehat{k}_1$. Summing this inequality from any $k\ge\widehat{k}_1$ to any $l>k$ yields that
 \begin{align*}
 2{\textstyle\sum_{i=k}^l}\|x^{i+1}-x^i\|
 &\le{\textstyle\sum_{i=k}^l}\|x^i-x^{i-1}\|+2\alpha^{-1}\gamma{\textstyle\sum_{i=k}^l}\Delta_{i,i+1}\\
 &\le{\textstyle\sum_{i=k}^l}\|x^{i+1}-x^i\|+\|x^k-x^{k-1}\|
     +2\alpha^{-1}\gamma\varphi\big(F(x^{k})-\varpi^*\big),
 \end{align*}
 where the second inequality is by the nonnegativity of $\varphi$. Thus, for any $k\ge\widehat{k}_1$, 
\begin{equation}\label{temp-ratek}
 {\textstyle\sum_{i=k}^l}\|x^{i+1}-x^i\|\le\|x^k-x^{k-1}\|+2\alpha^{-1}\gamma\varphi(F(x^{k})-\varpi^*).
\end{equation}
Passing the limit $l\to\infty$ to this inequality leads to $\sum_{k=0}^{\infty}\|x^{k+1}\!-x^k\|<\infty$. 
\end{proof}

\subsection{Convergence rate of iterate sequence}\label{sec4.3}

When $\Phi$ is a KL function of exponent $q\in[1/2,1)$, we can get the linear convergence rate of $\{x^k\}_{k\in\mathbb{N}}$ for $q=1/2$, and its sublinear one for $q\in(1/2,1)$ as follows. 
\begin{theorem}\label{KL-rate}
 Suppose that Assumptions \ref{ass2}-\ref{ass4} hold, that $g$ is also twice differentiable and $\psi^*$ is continuous relative to its domain, and that $\Phi$ has the KL property of exponent $q\in[1/2,1)$ on $Z^*$. Then, 
 \begin{description}
 \item [(i)] when $q=1/2$, there exist $\widehat{a}_1>0$ and $\rho,\widehat{\rho}\in(0,1)$ such that for all $k$ large enough, 
 \[
  F(x^{k+1})-\varpi^*\le \rho(F(x^k)-\varpi^*)\ \ {\rm and}\ \ \|x^k-x^*\|\le \widehat{a}_1\widehat{\rho}^{k}.
 \]
		
 \item[(ii)] when $q\in(1/2,1)$, there exist $\widehat{a}_2>0$ and $\widehat{a}_3>0$ such that for sufficiently large $k$, 
 \[
  F(x^k)-\varpi^*\le \widehat{a}_2 k^{\frac{1}{1-2q}}\ \ {\rm and}\ \ \|x^k-x^*\|\le \widehat{a}_3 k^{\frac{1-q}{1-2q}}.
 \]
\end{description}
\end{theorem}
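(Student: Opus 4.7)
My plan is to distill a scalar recursion for $r_k := F(x^k) - \varpi^*$ from the KL inequality and the relative-error estimate in Proposition~\ref{prop-relgap}, solve it in the two regimes separately, and then translate the resulting rates into rates on $\|x^k-x^*\|$ using the tail-sum estimate \eqref{temp-ratek} obtained inside the proof of Theorem~\ref{globalconv}. A preliminary task is to transfer the KL property of $\Phi$ on $Z^*$ to $\Phi_{\widetilde{c}}$ on $W^* = Z^*\times\{0\}$ with the same exponent $q\in[1/2,1)$. Because $\Phi_{\widetilde{c}}$ is the separable sum of $\Phi$ and the smooth quadratic $\widetilde{c}\|d\|^2$, writing $A=\Phi(z)-\varpi^*$ and $B=\widetilde{c}\|d\|^2$ gives $\mathrm{dist}(0,\partial\Phi_{\widetilde{c}}(w))^2 \ge \mathrm{dist}(0,\partial\Phi(z))^2 + 4\widetilde{c}B$ while $\Phi_{\widetilde{c}}(w)-\varpi^* = A+B$. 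Combining the KL inequality for $\Phi$ with $(A+B)^{2q}\le 2^{2q-1}(A^{2q}+B^{2q})$ and noting that $B^{1-2q}$ is bounded from below on a neighbourhood of $d=0$ (this uses $1-2q\le 0$, hence $q\ge 1/2$) yields the desired pointwise KL inequality for $\Phi_{\widetilde{c}}$ with exponent $q$; a compactness argument in the spirit of \cite[Lemma 6]{Bolte14} uniformises the constants across the compact set $W^*$.

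With this KL inequality in hand, chaining $(\Phi_{\widetilde{c}}(w^k)-\varpi^*)^q\le C_0\,\mathrm{dist}(0,\partial\Phi_{\widetilde{c}}(w^k))$ with Proposition~\ref{prop-relgap} (through the inclusion $\widehat{\partial}\subseteq\partial$), with Lemma~\ref{lemma-potential} (which gives $r_{k+1}\le \Phi_{\widetilde{c}}(w^k)-\varpi^*$), and with the sufficient decrease \eqref{F-decrease} in the form $\|x^{k+1}-x^k\|\le\sqrt{2/\alpha}\sqrt{r_k - r_{k+1}}$ produces the recursion
\begin{equation*}
r_{k+1}^{2q}\le C_2(r_k - r_{k+1})\qquad\text{for all sufficiently large }k.
\end{equation*}
When $q=1/2$ this rearranges to $r_{k+1}\le\rho\, r_k$ with $\rho := C_2/(1+C_2)\in(0,1)$. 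Feeding this into \eqref{temp-ratek} with $\varphi(t)=ct^{1/2}$ and observing that $\|x^k-x^{k-1}\|\le\sqrt{(2/\alpha)r_{k-1}}=O(\rho^{k/2})$ establishes $\|x^k - x^*\|\le\widehat{a}_1\widehat{\rho}^{k}$ with $\widehat{\rho} = \sqrt{\rho}$.

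When $q\in(1/2,1)$, I will apply the standard argument of \cite[Theorem~2]{Attouch09}: since the map $s\mapsto s^{1-2q}$ is strictly convex, positive and strictly decreasing on $(0,\infty)$, the above recursion forces $r_k^{1-2q} - r_{k-1}^{1-2q}\ge C_3>0$ for all large $k$, whence $r_k^{1-2q}\ge C_3 k - C_4$ and therefore $r_k = O(k^{1/(1-2q)})$. Substituting this estimate into \eqref{temp-ratek} with $\varphi(t)=ct^{1-q}$ yields $\|x^k-x^*\| = O(r_k^{1-q}) + O(\sqrt{r_{k-1}})$, and a quick exponent comparison $(1-q)/(1-2q)>1/(2(1-2q))$ (both negative, with the former closer to zero) shows that the first term dominates, giving $\|x^k-x^*\| = O(k^{(1-q)/(1-2q)})$. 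The most delicate step of the whole argument is the KL-exponent transfer in the first paragraph: one has to verify the uniformity of constants over the compact limit set $W^*$ and confirm that, for $q>1/2$, the small quadratic contribution $\widetilde{c}\|d\|^2$ near $d=0$ does not degrade the exponent, which ultimately rests on the compensating term $4\widetilde{c}^2\|d\|^2$ appearing in the subdifferential estimate. The remaining manipulations are careful but routine chainings of inequalities already established in the paper.
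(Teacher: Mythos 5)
Your argument is correct in substance and rests on the same mechanism as the paper's proof: the value recursion $r_{k+1}^{2q}\le C_2(r_k-r_{k+1})$, obtained by chaining the uniformized KL inequality for $\Phi_{\widetilde{c}}$, Proposition~\ref{prop-relgap}, the bound \eqref{relationPhi} from Lemma~\ref{lemma-potential}, and the descent inequality \eqref{F-decrease}, is exactly the paper's \eqref{recursion1-Vk} up to an index shift. You deviate in two places. First, for the KL-exponent transfer from $\Phi$ to $\Phi_{\widetilde{c}}$ the paper simply cites \cite[Theorem 3.3]{LiPong18} (this is where the hypothesis that $\psi^*$ is continuous relative to its domain enters), whereas you give a self-contained computation from the separable structure $\partial\Phi_{\widetilde{c}}(w)=\partial\Phi(z)\times\{2\widetilde{c}d\}$; this works, but as written it only covers the case $A=\Phi(z)-\varpi^*>0$, and you should add the easy complementary case $A\le 0<A+B$, where $(A+B)^{2q}\le B^{2q}\le MB\le \frac{M}{4\widetilde{c}}\,{\rm dist}(0,\partial\Phi_{\widetilde{c}}(w))^2$ — precisely the boundary situation that the relative-continuity hypothesis in the cited theorem is designed to handle. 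Second, for the iterate rates the paper forms the tail-sum recursion \eqref{recursion1-Deltak} for $\Delta_k$ and invokes \cite[Theorem 2]{Attouch09} for both sequences at once, while you first solve the value recursion ($r_{k+1}\le\rho r_k$ for $q=1/2$; the Attouch--Bolte increment argument giving $r_k^{1-2q}\ge C_3k-C_4$ for $q>1/2$) and then substitute into \eqref{temp-ratek} together with $\|x^k-x^{k-1}\|\le\sqrt{2r_{k-1}/\alpha}$; your exponent comparison $(1-q)/(1-2q)>1/(2(1-2q))$ is correct, so the dominant term is as you claim. Your route makes the iterate rate slightly more transparent and avoids the auxiliary inequality $\|x^k-x^{k-1}\|\le\|x^k-x^{k-1}\|^{\frac{1-q}{q}}$ that the paper needs for \eqref{recursion1-Deltak}, at the cost of proving the exponent transfer by hand; you should also dispose, as the paper does, of the degenerate case $F(x^{k_0})=F(x^{k_0+1})$, in which the method terminates finitely and the stated rates hold trivially.
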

\begin{proof}
 Note that ${\rm dom}\,\partial\Phi\subset{\rm dom}\,\Phi=\big\{(x,s,L,\xi)\in\mathbb{Z}\,|\,G(x,s,L)\in\mathbb{R}_{-}^m,-\xi\in{\rm dom}\,\psi^*\big\}$. Since $\psi^*$ is continuous relative to its domain, the function $\Phi$ is continuous relative to its domain ${\rm dom}\,\Phi$. By \cite[Theorem 3.3]{LiPong18}, if $\Phi$ has the KL property of exponent $q\in[1/2,1)$ at a point $\overline{z}\in Z^*$, so does $\Phi_{\widetilde{c}}$ at $(\overline{z},0)$ because the strongly convex function $\mathbb{R}^n\ni d\mapsto \widetilde{c}\|d\|^2$ is a KL function of exponent $1/2$. According to Theorem \ref{globalconv}, the sequence $\{x^k\}_{k\in\mathbb{N}}$ converges to $x^*$. To prove items (i)-(ii), for each $k\in\mathbb{N}$, write $\Delta_k\!:=\!\sum_{j=k}^{\infty}\|x^{j+1}\!-\!x^j\|$ and $V_k:=F(x^k)-\varpi^*$. From the proof of Theorem \ref{globalconv}, it suffices to consider that $F(x^k)>F(x^{k+1})>\varpi^*$ for all $k\in\mathbb{N}$. Note that \eqref{equa1-later} holds for $\mathbb{R}_{+}\ni t\mapsto\varphi(t)=ct^{1-q}\ (c>0)$, i.e.,   
\begin{equation}\label{ineq-F}
 (F(x^{k})\!-\!\varpi^*)^{q}\le(\Phi_{\widetilde{c}}(w^{k-1})-\varpi^*)^{q} 
 \le\gamma c(1-q)\|x^k-x^{k-1}\|\quad\forall k\ge\widehat{k}_1.
\end{equation}
Combining \eqref{ineq-F} with \eqref{F-decrease} shows that  the following inequality holds for all $k\ge\widehat{k}_1$:
\begin{equation}\label{recursion1-Vk}
V_k^{2q}\le[\gamma c(1-q)]^2\|x^{k}-x^{k-1}\|^2\le c_1(V_{k-1}-V_k)\ \ {\rm with}\ c_1=2\alpha^{-1}[\gamma c(1-q)]^2.
\end{equation}
In addition, by passing the limit $l\to\infty$ to \eqref{temp-ratek} and using $\varphi(t)=ct^{1-q}$, it follows
\[
 \Delta_k\le \|x^k\!-\!x^{k-1}\|+2\alpha^{-1}\gamma c\big(F(x^k)-\varpi^*\big)^{1-q}\quad\forall k\ge\widehat{k}_1.
\]
Recall that $\lim_{k\to\infty}\|x^{k+1}-x^k\|=0$ and $\frac{1-q}{q}\le1$. If necessary by increasing $\widehat{k}_1$, we have $\|x^k\!-\!x^{k-1}\|\le \|x^k\!-\!x^{k-1}\|^{\frac{1-q}{q}}$. Then, the above inequality implies that for all $k\ge\widehat{k}_1$, 
\begin{align}\label{recursion1-Deltak}
\Delta_k&\le \|x^k\!-\!x^{k-1}\|^{\frac{1-q}{q}}+2\alpha^{-1}\gamma c\big(F(x^k)-\varpi^*\big)^{1-q}\nonumber\\
&\stackrel{\eqref{ineq-F}}{\le}\|x^k-x^{k-1}\|^{\frac{1-q}{q}}+2\alpha^{-1}(\gamma c)^{\frac{1}{q}}(1\!-\!q)^{\frac{1-q}{q}}\|x^k-x^{k-1}\|^{\frac{1-q}{q}}\nonumber\\ 
 &=c_2(\Delta_{k-1}-\Delta_k)^{\frac{1-q}{q}}\ \ {\rm with}\ c_2=1\!+2\alpha^{-1}(\gamma c)^{\frac{1}{q}}(1\!-\!q)^{\frac{1-q}{q}}.
\end{align}
 With \eqref{recursion1-Vk}-\eqref{recursion1-Deltak}, using the same arguments as those for \cite[Theorem 2] {Attouch09} yields the result. \end{proof}

The KL property of $\Phi$ with exponent $q\in[1/2,1)$ is the key to achieve the convergence rate of the sequence $\{x^k\}_{k\in\mathbb{N}}$. However, unlike the KL property, the KL property of exponent $q\in[1/2,1)$ is few and far between for nonconvex and nonsmooth functions except for those with some special structures; see \cite{LiPong18,WuPanBi21,YuLiPong21}. Thus, for a general nonconvex and nonsmooth function, it is important to provide a reasonable condition to ensure its KL property of exponent $q\in[1/2,1)$. Motivated by this, the rest of this section focuses on the condition for the KL property of $\Phi$ with exponent $q\in[1/2,1)$. This requires the following exact characterization for the subdifferential of $\Phi$.  

\begin{lemma}\label{lemma-subdiff}
Let $\mathcal{G}(x,s,L):=G(x,s,L)-\mathbb{R}_{-}^m$ for $(x,s,L)\in\mathbb{R}^n\times\mathbb{R}^n\times\mathbb{R}_{++}^m$. Consider any point $\overline{z}\!:=\!((\overline{x},\overline{s},\overline{L}),\overline{\xi})\in G^{-1}(\mathbb{R}_{-}^m)\times(-{\rm dom}\,\partial\psi^*)$. Suppose that $g$ is twice continuously differentiable at $\overline{x}$, and that the mapping $\mathcal{G}$ is subregular at $((\overline{x},\overline{s},\overline{L}),0_{\mathbb{R}^m})$. Then,
 \begin{equation*}
 \widehat{\partial}\Phi(\overline{z})=\partial\Phi(\overline{z})=\!\begin{pmatrix}			[\nabla\!f(\overline{x})+\overline{\xi}\!+\!\partial\phi(\overline{x})]\times\{0_{\mathbb{R}^n}\}\times\{0_{\mathbb{R}^m}\}
 \!+\!\nabla G(\overline{x},\overline{s},\overline{L})\mathcal{N}_{\mathbb{R}_{-}^m}(G(\overline{x},\overline{s},\overline{L}))\\
 \overline{x}-\partial\psi^*(-\overline{\xi})
 \end{pmatrix}.
 \end{equation*}
 When $\overline{x}=\overline{s}$, the system $g(x)\in\mathbb{R}_{-}^m$ satisfies the MFCQ at $\overline{x}$ iff the system $G(x,s,L)\in\mathbb{R}_{-}^m$ satisfies the MFCQ at $(\overline{x},\overline{s},\overline{L})$, or equivalently $\mathcal{G}$ is metrically regular at $((\overline{x},\overline{s},\overline{L}),0_{\mathbb{R}^m})$.
\end{lemma}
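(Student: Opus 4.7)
The plan is to split $\Phi=\Phi_1+\Phi_2$ with $\Phi_1(z):=\delta_{G^{-1}(\mathbb{R}_{-}^m)}(x,s,L)$ and $\Phi_2(z):=f(x)+\phi(x)+\langle x,\xi\rangle+\psi^*(-\xi)$, handle each piece separately, and then apply an exact (rather than merely inclusive) sum rule. Observe that Proposition~\ref{prop-relgap} already furnishes the ``$\supset$'' direction of the claimed formula via the general inclusion $\widehat{\partial}\Phi(\overline{z})\supset\widehat{\partial}\Phi_1(\overline{z})+\widehat{\partial}\Phi_2(\overline{z})$, so the new work is to promote this to an equality and to identify $\widehat{\partial}\Phi(\overline{z})$ with $\partial\Phi(\overline{z})$.

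The decisive input is the subregularity of $\mathcal{G}$ at $((\overline{x},\overline{s},\overline{L}),0)$. Applying exactly the sandwich argument recalled in Section~\ref{sec2.1} (the inclusion from \cite[p.\,211]{Ioffe09} combined with the always-true $\widehat{\mathcal{N}}\supset\nabla G\cdot\mathcal{N}_{\mathbb{R}_{-}^m}$) yields
\[
\mathcal{N}_{G^{-1}(\mathbb{R}_{-}^m)}(\overline{x},\overline{s},\overline{L})
=\widehat{\mathcal{N}}_{G^{-1}(\mathbb{R}_{-}^m)}(\overline{x},\overline{s},\overline{L})
=\nabla G(\overline{x},\overline{s},\overline{L})\mathcal{N}_{\mathbb{R}_{-}^m}(G(\overline{x},\overline{s},\overline{L})),
\]
so $G^{-1}(\mathbb{R}_{-}^m)$ is Clarke regular at $(\overline{x},\overline{s},\overline{L})$ and the right-hand side (crossed with $\{0_{\mathbb{R}^n}\}$ for the $\xi$-block) is the exact value of both $\widehat{\partial}\Phi_1(\overline{z})$ and $\partial\Phi_1(\overline{z})$. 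The function $\Phi_2$ is Clarke regular at $\overline{z}$ as the sum of a $\mathcal{C}^{1,1}$ term, a smooth bilinear pairing, and two convex functions, with subdifferential decomposing coordinatewise into $[\nabla\!f(\overline{x})+\overline{\xi}+\partial\phi(\overline{x})]\times\{0_{\mathbb{R}^n}\}\times\{0_{\mathbb{R}^m}\}\times[\overline{x}-\partial\psi^*(-\overline{\xi})]$. With both summands Clarke regular, the exact sum rule from \cite[Corollary 10.9]{RW98} gives $\partial\Phi(\overline{z})=\widehat{\partial}\Phi(\overline{z})=\partial\Phi_1(\overline{z})+\partial\Phi_2(\overline{z})$, which after rearranging the blocks of coordinates is precisely the stated expression.

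For the second assertion, with $\overline{x}=\overline{s}$ I would differentiate $G(x,s,L)=g(s)+g'(s)(x-s)+\tfrac{1}{2}\|x-s\|^2 L$ componentwise and evaluate at the reference point. A direct computation yields $\nabla_{\!x}G_i(\overline{x},\overline{s},\overline{L})=\nabla g_i(\overline{x})$, $\nabla_{\!s}G_i(\overline{x},\overline{s},\overline{L})=0$ and $\nabla_{\!L_i}G_i(\overline{x},\overline{s},\overline{L})=0$ for every $i\in[m]$; moreover $G(\overline{x},\overline{s},\overline{L})=g(\overline{x})$, so the systems $G(\cdot,\cdot,\cdot)\in\mathbb{R}_{-}^m$ and $g(\cdot)\in\mathbb{R}_{-}^m$ share the same active set at the reference point. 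Hence the MFCQ for $G\in\mathbb{R}_{-}^m$ reduces to the existence of a direction $(d_x,d_s,d_L)$ satisfying $\langle\nabla g_i(\overline{x}),d_x\rangle<0$ for all active $i$, which is equivalent to the MFCQ for $g\in\mathbb{R}_{-}^m$ at $\overline{x}$. The equivalence of the latter with metric regularity of $\mathcal{G}$ at $((\overline{x},\overline{s},\overline{L}),0_{\mathbb{R}^m})$ is the characterization of the RCQ recalled in Section~\ref{sec2.1}.

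The only genuinely subtle step is ensuring that the hypothesis (metric subregularity of $\mathcal{G}$) is strong enough to force \emph{equality} of the normal cones rather than just one inclusion; once that equality and the resulting Clarke regularity of $G^{-1}(\mathbb{R}_{-}^m)$ are in place, the remaining subdifferential calculus and the Jacobian computation for the second assertion are routine.
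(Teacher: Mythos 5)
Your proof is correct, and for the subdifferential identity it takes a route that is close to, but not the same as, the paper's. The paper obtains the upper inclusion $\partial\Phi(\overline{z})\subset[\cdots]$ in one stroke from the subregularity of $\mathcal{G}$ via the cited result of Ioffe, and then sandwiches it against the lower inclusion \eqref{aim-inclusion} (already proved in Proposition \ref{prop-relgap}) and $\widehat{\partial}\Phi(\overline{z})\subset\partial\Phi(\overline{z})$; no sum rule with a qualification condition is invoked. You instead first upgrade the normal cone of $G^{-1}(\mathbb{R}_{-}^m)$ to an equality (Clarke regularity of the constraint set) by the Section \ref{sec2.1} sandwich argument, which gives the exact value of $\partial\Phi_1(\overline{z})=\widehat{\partial}\Phi_1(\overline{z})$, and then add the two regular pieces by the exact sum rule of \cite[Corollary 10.9]{RW98}. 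Be aware that this rule requires more than regularity of the summands: one must also check the horizon-subdifferential qualification, i.e.\ that $v_1+v_2=0$ with $v_1\in\partial^{\infty}\Phi_1(\overline{z})$ and $v_2\in\partial^{\infty}\Phi_2(\overline{z})$ forces $v_1=v_2=0$. You do not state this, but it holds trivially here: $\partial^{\infty}\Phi_1(\overline{z})\subset\mathcal{N}_{G^{-1}(\mathbb{R}_{-}^m)}(\overline{x},\overline{s},\overline{L})\times\{0_{\mathbb{R}^n}\}$ lives in the $(x,s,L)$-block, while, since $f$, $\phi$ and $\langle x,\xi\rangle$ are locally Lipschitz near $\overline{z}$, $\partial^{\infty}\Phi_2(\overline{z})\subset\{0_{\mathbb{R}^n}\}\times\{0_{\mathbb{R}^n}\}\times\{0_{\mathbb{R}^m}\}\times\big(-\mathcal{N}_{{\rm dom}\,\psi^*}(-\overline{\xi})\big)$ lives in the $\xi$-block, so the two cones meet only at the origin. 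With that one sentence added your argument is complete; it buys a self-contained derivation within the calculus of \cite{RW98} (no appeal to the Ioffe upper estimate for the composite), at the price of this extra qualification check, and it also does not need \eqref{aim-inclusion} as a separate ingredient. For the second assertion you use the primal form of the MFCQ (a strictly feasible direction) after computing $\nabla_{(x,s,L)}G_i(\overline{x},\overline{s},\overline{L})=(\nabla g_i(\overline{x}),0,0)$ and noting that the active sets coincide, whereas the paper uses the dual characterization ${\rm Ker}(\nabla G(\overline{x},\overline{s},\overline{L}))\cap\mathcal{N}_{\mathbb{R}_{-}^m}(G(\overline{x},\overline{s},\overline{L}))=\{0_{\mathbb{R}^m}\}$; the two are equivalent and rest on the same Jacobian computation, and your appeal to Section \ref{sec2.1} for the equivalence with metric regularity matches the paper.
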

\begin{proof}
 From the subregularity of $\mathcal{G}$ at $((\overline{x},\overline{s},\overline{L}),0_{\mathbb{R}^m})$ and \cite[Page 211]{Ioffe08}, it follows that
 \begin{equation*}
 \partial\Phi(\overline{z})\subset\!\begin{pmatrix}			[\nabla\!f(\overline{x})+\overline{\xi}\!+\!\partial\phi(\overline{x})]\times\{0_{\mathbb{R}^n}\}\times\{0_{\mathbb{R}^m}\}\!+\!\nabla G(\overline{x},\overline{s},\overline{L})\mathcal{N}_{\mathbb{R}_{-}^m}(G(\overline{x},\overline{s},\overline{L}))\\
 \overline{x}-\partial\psi^*(-\overline{\xi})
 \end{pmatrix}.
\end{equation*}
 Along with \eqref{aim-inclusion} for $z=\overline{z}$ and $\partial\Phi(\overline{z})\supset\widehat{\partial}\Phi(\overline{z})$, we get the desired equality. Now assume that $\overline{x}=\overline{s}$. Then, $G(\overline{x},\overline{s},\overline{L})=g(\overline{x})$ and ${\rm Ker}\nabla G(\overline{x},\overline{s},\overline{L})={\rm Ker}\nabla g(\overline{x})$, and the former implies $\mathcal{N}_{\mathbb{R}_{-}^m}(G(\overline{x},\overline{s},\overline{L}))=\mathcal{N}_{\mathbb{R}_{-}^m}(g(\overline{x}))$. From Section \ref{sec2.1}, the system $g(x)\in\mathbb{R}_{-}^m$ satisfies the MFCQ at $\overline{x}$ iff ${\rm Ker}(\nabla g(\overline{x}))\cap\mathcal{N}_{\mathbb{R}_{-}^m}(g(\overline{x}))=\{0_{\mathbb{R}^m}\}$, while the system  $G(x,s,L)\in\mathbb{R}_{-}^m$ satisfies the MFCQ at $(\overline{x},\overline{s},\overline{L})$ iff ${\rm Ker}(\nabla G(\overline{x},\overline{s},\overline{L}))\cap\mathcal{N}_{\mathbb{R}_{-}^m}(G(\overline{x},\overline{s},\overline{L}))=\{0_{\mathbb{R}^m}\}$. The conclusion follows.
\end{proof}

Let  $h:\mathbb{R}^n\times\mathbb{R}^m\times\mathbb{R}^n\to\overline{\mathbb{R}}$ denote the following extended real-valued function 
\begin{equation}\label{hfun}
h(x,\omega,\xi):=f(x)+\phi(x)+\delta_{\mathbb{R}_{-}^m}(\omega)+\langle x,\xi\rangle+\psi^*(-\xi).
\end{equation}
Comparing with \eqref{Phi-def},  $\Phi(z)=h(x,G(x,s,L),\xi)$ for all $z=(x,s,L,\xi)\in\mathbb{Z}$. Observe that $h$ is almost separable, so its KL property of exponent $q\in[1/2,1)$ is relatively easy to establish. Based on this and the composite structure of $\Phi$, we are ready to provide a checkable condition for $\Phi$ to have the KL property of exponent $q\in[1/2,1$. 
\begin{proposition}\label{prop-KLexp}
 Consider any $\overline{z}=(\overline{x},\overline{s},\overline{L},\overline{\xi})\in Z^*$. Suppose that $g$ is twice continuously differentiable at $\overline{x}$ and that the function $h$ in \eqref{hfun} has the KL property of exponent $q\in[1/2,1)$ at $(\overline{x},g(\overline{x}),\overline{\xi})$. Then, $\Phi$ has the KL property of exponent $q$ at $\overline{z}$ under the condition
 \begin{equation}\label{cond-KL}
 {\rm Ker}[\mathcal{I}\ \ \nabla\!g(\overline{x})]\cap\big[\mathbb{R}^n\times\mathcal{N}_{\mathbb{R}_{-}^m}(g(\overline{x}))\big]=\{(0_{\mathbb{R}^n},0_{\mathbb{R}^m})\}.
 \end{equation}
\end{proposition}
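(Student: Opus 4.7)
The plan is to write $\Phi$ as the composition $\Phi(z) = h(\Psi(z))$ with the smooth outer map $\Psi(z) := (x, G(x,s,L), \xi)$ (the twice continuous differentiability of $g$ at $\overline{x}$ ensures $\Psi$ is $\mathcal{C}^1$ on a neighborhood of $\overline{z}$), and to transfer the KL inequality from $h$ at $\Psi(\overline{z}) = (\overline{x}, g(\overline{x}), \overline{\xi})$ to $\Phi$ at $\overline{z}$. Condition \eqref{cond-KL} is a Mangasarian-Fromovitz type constraint qualification on the system $g(x)\in\mathbb{R}_-^m$ at $\overline{x}$ which, together with $\overline{x} = \overline{s}$ from Proposition \ref{prop-ZWstar}(ii), allows Lemma \ref{lemma-subdiff} to deliver the exact chain rule $\partial\Phi(z)=\nabla\Psi(z)\,\partial h(\Psi(z))$ on a neighborhood $U$ of $\overline{z}$, together with the product-type form of $\partial h$.

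The heart of the argument is the uniform injectivity estimate
\[
\|\nabla\Psi(z)(a,b,c)\|\ \ge\ \kappa\,\|(a,b,c)\|,\qquad\forall z\in U,\ (a,b,c)\in\partial h(\Psi(z)),
\]
for some $\kappa>0$. By polyhedrality of $\mathbb{R}_-^m$ and continuity of $G$, one has $\mathcal{N}_{\mathbb{R}_-^m}(G(x,s,L))\subset \mathcal{N}_{\mathbb{R}_-^m}(g(\overline{x}))=:K$ on $U$ (shrinking if necessary), so the middle block $b$ lies in the fixed closed convex cone $K$. A direct computation using $\overline{x}=\overline{s}$ gives $\nabla_{\!x}G(\overline{z})=\nabla g(\overline{x}),\ \nabla_{\!s}G(\overline{z})=0,\ \nabla_{\!L}G(\overline{z})=0$, so $\nabla\Psi(\overline{z})(a,b,c)=(a+\nabla g(\overline{x})b,\,0,\,0,\,c)$. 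Condition \eqref{cond-KL} exactly states that the linear map $(a,b)\mapsto a+\nabla g(\overline{x})b$ is injective on $\mathbb{R}^n\times K$; by homogeneity and compactness on $\{(a,b)\in\mathbb{R}^n\times K:\|(a,b)\|=1\}$ (closed because $K$ is), one extracts $\kappa_0>0$ with $\|a+\nabla g(\overline{x})b\|\ge\kappa_0\|(a,b)\|$ on $\mathbb{R}^n\times K$. Combining with the $c$-component (which is preserved by $\nabla\Psi$) produces the estimate at $\overline{z}$, and the continuity of $\nabla G$ (which sends $\|\nabla_{\!x}G(z)-\nabla g(\overline{x})\|$, $\|\nabla_{\!s}G(z)\|$, $\|\nabla_{\!L}G(z)\|$ to zero as $z\to\overline{z}$) propagates it to $U$ with a possibly smaller $\kappa>0$.

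With this bound the KL property follows at once. For $z\in U$ with $\Phi(\overline{z})<\Phi(z)<\Phi(\overline{z})+\eta$ and any $\zeta=\nabla\Psi(z)(a,b,c)\in\partial\Phi(z)$,
\[
\|\zeta\|\ \ge\ \kappa\,\|(a,b,c)\|\ \ge\ \kappa\,{\rm dist}(0,\partial h(\Psi(z))),
\]
so taking infima yields ${\rm dist}(0,\partial\Phi(z))\ge\kappa\,{\rm dist}(0,\partial h(\Psi(z)))$. The identity $\Phi(z)-\Phi(\overline{z})=h(\Psi(z))-h(\Psi(\overline{z}))$ together with the continuity of $\Psi$ (shrink $U$ further so that $\Psi(U)$ lies in the KL neighborhood of $h$ at $\Psi(\overline{z})$) combines with the KL inequality for $h$ of exponent $q$ to give ${\rm dist}(0,\partial\Phi(z))\ge(\kappa/c)\,[\Phi(z)-\Phi(\overline{z})]^q$, the desired KL property of exponent $q$ for $\Phi$ at $\overline{z}$.

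The main obstacle will be the uniform injectivity step. Pointwise injectivity at $\overline{z}$ is immediate from \eqref{cond-KL}, but extending it to all $z\in U$ uniformly in $(a,b,c)$ requires the scale-invariance reduction to the unit sphere of $\mathbb{R}^n\times K\times \mathbb{R}^n$ followed by a compactness-plus-continuity argument; particular care is needed because $K$ is unbounded and the minimizer over $\partial h(\Psi(z))$ a priori involves elements $b$ of arbitrary magnitude, so the estimate must be phrased in scale-invariant form before passing to the limit $z\to\overline{z}$.
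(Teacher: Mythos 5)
Your proposal is correct, but it reaches the conclusion by a genuinely different route than the paper. The paper argues by contradiction: it takes a sequence $z^k\to\overline{z}$ violating the KL inequality, uses the robustness of metric regularity (from \eqref{cond-KL} implying the MFCQ and $\overline{x}=\overline{s}$) together with Lemma \ref{lemma-subdiff} to write each subgradient of $\Phi$ at $z^k$ through the blocks $(a^k,\lambda^k,c^k)\in\partial h(\Psi(z^k))$, normalizes by $t_k=\|(a^k,\lambda^k,c^k)\|$, shows via the MFCQ that the multiplier part stays bounded, and passes to the limit to produce a nonzero element of ${\rm Ker}[\mathcal{I}\ \ \nabla\!g(\overline{x})]\cap\big[\mathbb{R}^n\times\mathcal{N}_{\mathbb{R}_{-}^m}(g(\overline{x}))\big]$, contradicting \eqref{cond-KL}. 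You instead prove directly the uniform scale-invariant estimate $\|\nabla\Psi(z)^{*}(a,b,c)\|\ge\kappa\|(a,b,c)\|$ for $b$ in the frozen polyhedral cone $K=\mathcal{N}_{\mathbb{R}_{-}^m}(g(\overline{x}))$, obtained by homogeneity plus compactness of the unit-sphere section at $\overline{z}$ and operator-norm continuity of $\nabla_{\!x}G$, and then transfer the KL inequality of $h$ with the explicit factor $\kappa$; this is legitimate because for the orthant the local inclusion $\mathcal{N}_{\mathbb{R}_{-}^m}(G(x,s,L))\subset K$ does hold near $\overline{z}$, and discarding the $s$- and $L$-blocks in the lower bound is harmless. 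The two arguments rest on the same ingredients (Proposition \ref{prop-ZWstar}(ii), Lemma \ref{lemma-subdiff}, robustness of metric regularity — which you should invoke explicitly when applying the chain rule at points $z\ne\overline{z}$, as the paper does, and the product structure of $\partial h$), but yours is quantitative and sequence-free, making the dependence of the KL modulus of $\Phi$ on that of $h$ explicit through $\kappa$, whereas the paper's sequential contradiction avoids the local normal-cone inclusion by using only outer semicontinuity of $\mathcal{N}_{\mathbb{R}_{-}^m}$ in the limit. Your closing worry about elements $b$ of arbitrary magnitude is already resolved by the positive homogeneity of your estimate, so no gap remains there.
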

\begin{proof}
 Note that the condition \eqref{cond-KL} implies ${\rm Ker}\,\nabla g(\overline{x})\cap\mathcal{N}_{\mathbb{R}_{-}^m}(g(\overline{x}))=\{0_{\mathbb{R}^m}\}$, i.e., the system $g(x)\in\mathbb{R}_{-}^m$ satisfies the MFCQ at $\overline{x}$ according to the discussion in Section \ref{sec2.1}. Since $\overline{x}=\overline{s}$ by Proposition \ref{prop-ZWstar} (ii), the mapping $\mathcal{G}$ in Lemma \ref{lemma-subdiff} is metrically regular at  $((\overline{x},\overline{s},\overline{L}),0_{\mathbb{R}^m})$. From the robustness of metric regularity, there exists $\varepsilon>0$ such that $\mathcal{G}$ is metrically regular at any $((x,s,L),0_{\mathbb{R}^m})$ with $(x,s,L)\in\mathbb{B}((\overline{x},\overline{s},\overline{L}),\varepsilon)\cap G^{-1}(\mathbb{R}_{-}^m)$, and consequently, 
 \begin{equation*}
 \partial\Phi(z)=\!\begin{pmatrix}			[\nabla\!f(x)+\xi\!+\!\partial\phi(x)]\times\{0_{\mathbb{R}^n}\}\times\{0_{\mathbb{R}^m}\}
 \!+\!\nabla G(x,s,L)\mathcal{N}_{\mathbb{R}_{-}^m}(G(x,s,L))\\
 x-\partial\psi^*(-\xi)
 \end{pmatrix}.
 \end{equation*}  
 Now suppose on the contrary that $\Phi$ does not have the KL property of exponent $q$ at $\overline{z}$. There exists a sequence $z^k:=(x^k,s^k,L^k,\xi^k)\to\overline{z}$ with $\Phi(\overline{z})<\Phi(z^k)<\Phi(\overline{z})+\frac{1}{k}$ such that 
 \[
  {\rm dist}(0,\partial\Phi(z^k))<\frac{1}{k}(\Phi(z^k)-\Phi(\overline{z}))^{q}=\frac{1}{k}(h(x^k,\omega^k,\xi^k)-\overline{\nu})^{q}\quad\forall k\in\mathbb{N},
 \] 
 where $\omega^k\!:=G(x^k,s^k,L^k)$ for each $k\in\mathbb{N}$ and $\overline{\nu}\!:=h(\overline{x},\overline{\omega},\overline{\xi})$ for $\overline{\omega}:=g(\overline{x})$. Obviously, for each $k\in\mathbb{N}$, $z^k\in{\rm dom}\,\Phi$, so $(x^k,s^k,L^k)\in G^{-1}(\mathbb{R}_{-}^m)$. Along with $z^k\to\overline{z}$, there exists $\widetilde{k}\in\mathbb{N}$ such that $\mathcal{G}$ is metrically regular at each $(x^k,s^k,L^k)$ for $k\ge\widetilde{k}$. From the above two equations, for each $k\ge\widetilde{k}$, there exist $v^k\in\partial\phi(x^k)$, $\lambda^k\in\mathcal{N}_{\mathbb{R}_{-}^m}(\omega^k)$ and $y^k\in\partial\psi^*(-\xi^k)$ such that
 \begin{equation}\label{kl1}
 \left\|\begin{pmatrix}
 \nabla\!f(x^k)+\xi^k+v^k+\nabla\!g(s^k)\lambda^k+(x^k-s^k)\langle L^k,\lambda^k\rangle\\
 (s^k-x^k)\langle L^k,\lambda^k\rangle+\sum_{i=1}^m[\lambda_i^k\nabla^2g_i(s^k)(x^k-s^k)]\\
 \frac{1}{2}\|x^k-s^k\|^2\lambda^k\\
  x^k-y^k
 \end{pmatrix}\right\|<\frac{1}{k}(h(x^k,\omega^k,\xi^k)-\overline{\nu})^{q}.
 \end{equation}
 Since $h$ has the KL property of exponent $q$ at $(\overline{x},\overline{\omega},\overline{\xi})$, there exist $\delta>0,\eta>0$ and $c>0$ such that for all $(x,\omega,\xi)\in\mathbb{B}((\overline{x},\overline{\omega},\overline{\xi}),\delta)\cap[\overline{\nu}<h<\overline{\nu}+\eta]$,
 \begin{equation*}
 {\rm dist}(0,\partial h(x,\omega,\xi))\ge c(h(x,\omega,\xi)-\overline{\nu})^{q}.
 \end{equation*}
 From $z^k\to\overline{z}$ and the expression of $G$ in \eqref{def-Gmap}, we have $\omega^k\to\overline{\omega}$. Recall that for each $k\in\mathbb{N}$, $\Phi(\overline{z})<\Phi(z^k)<\Phi(\overline{z})+\frac{1}{k}$, i.e., $\overline{\nu}<h(x^k,\omega^k,\xi^k)<\overline{\nu}+\frac{1}{k}$. If necessary by increasing $\widetilde{k}$, for each $k\ge\widetilde{k}$, $(x^k,\omega^k,\xi^k)\in\mathbb{B}((\overline{x},\overline{\omega},\overline{\xi}),\delta)\cap[\overline{\nu}<h<\overline{\nu}+\eta]$, and from the above inequality, 
 \begin{equation}\label{kl3}
 c(h(x^k,\omega^k,\xi^k)-\overline{\nu})^q\le{\rm dist}(0,\partial h(x^k,\omega^k,\xi^k))
 \le\|(\nabla\!f(x^k)+v^k+\xi^k,\lambda^k,x^k\!-\!y^k)\|,
 \end{equation}
 where the second inequality is due to $(\nabla\!f(x^k)+\xi^k+v^k,\lambda^k,x^k\!-\!y^k)\in\partial h(x^k,\omega^k,\xi^k)$ for each $k\in\mathbb{N}$. Furthermore, this inclusion along with the first inequality in \eqref{kl3} implies that 
 \[
  t_k\!:=\|(\nabla\!f(x^k)+v^k+\xi^k,\lambda^k,x^k\!-\!y^k)\|\ge c(h(x^k,\omega^k,\xi^k)-\overline{\nu})^q>0\quad\forall k\ge\widetilde{k}. 
 \]
 For each $k\ge\widetilde{k}$, write $(\widetilde{\varpi}^k,\widetilde{\xi}^k,\widetilde{\lambda}^k,\widetilde{\zeta}^k)\!:=\frac{1}{t_k}(\nabla\!f(x^k)+v^k,\xi^k,\lambda^k,x^k-y^k)$.  
 Together with the above inequalities \eqref{kl1} and \eqref{kl3}, for each $k\ge\widetilde{k}$, it holds 
 \begin{equation}\label{kl4}
 \left\|\begin{pmatrix}
\widetilde{\varpi}^k+\widetilde{\xi}^k+\nabla\!g(s^k)\widetilde{\lambda}^k+\langle L^k,\widetilde{\lambda}^k\rangle(x^k-s^k)\\
 \langle L^k,\widetilde{\lambda}^k\rangle(s^k-x^k)+\sum_{i=1}^m[\widetilde{\lambda}_i^k\nabla^2g_i(s^k)(x^k-s^k)]\\
 \frac{1}{2}\|x^k-s^k\|^2\widetilde{\lambda}^k\\
 \widetilde{\zeta}^k
 \end{pmatrix}\right\|\le\frac{1}{ck}.
 \end{equation}
 From the definition of $\{(\widetilde{\varpi}^k,\widetilde{\xi}^k,\widetilde{\lambda}^k,\widetilde{\zeta}^k)\}_{k\ge\widetilde{k}}$, there exists an index set $\mathcal{K}\subset\mathbb{N}$ such that $\{(\widetilde{\varpi}^k,\widetilde{\xi}^k,\widetilde{\lambda}^k,\widetilde{\zeta}^k)\}_{k\in\mathcal{K}}$ is convergent with the limit $(\widetilde{\varpi},\widetilde{\xi},\widetilde{\lambda},\widetilde{\zeta})$ satisfying $\|(\widetilde{\varpi}+\widetilde{\xi},\widetilde{\lambda},\widetilde{\zeta})\|=1$.
	
 Next we claim that the sequence $\{(\lambda^k,y^k)\}_{k\in\mathcal{K}}$ is bounded. Recall that $v^k\in\partial\phi(x^k)$ for each $k\ge\widetilde{k}$ and the sequence $\{x^k\}_{k\in\mathbb{N}}$ is bounded. From Remark \ref{remark-subdiff} and \cite[Proposition 5.15]{RW98}, the sequence $\{v^k\}_{k\in\mathbb{N}}$ is bounded. Suppose on the contrary that $\{(\lambda^k,y^k)\}_{k\in\mathcal{K}}$ is unbounded. By the definition of $t_k$ and the boundedness of $\{(x^k,v^k,\xi^k)\}_{k\in\mathbb{N}}$, we get $\lim_{\mathcal{K}\ni k\to\infty}t_k=\infty$. This means that $\widetilde{\varpi}=0$ and $\widetilde{\xi}=0$. 
 If necessary by taking a subsequence, we can assume $\lim_{\mathcal{K}\ni k\to\infty}(\widetilde{\lambda}^k,\widetilde{\zeta}^k)=(\widetilde{\lambda},\widetilde{\zeta})$. Then, $\|(\widetilde{\lambda},\widetilde{\zeta})\|=1$. Now passing the limit $\mathcal{K}\ni k\to\infty$ to \eqref{kl4} results in $\nabla\!g(\overline{x})\widetilde{\lambda}=0$ and $\widetilde{\zeta}=0$. The latter, along with $\|(\widetilde{\lambda},\widetilde{\zeta})\|=1$, implies $\|\widetilde{\lambda}\|=1$. Recall that $\widetilde{\lambda}^k\in\mathcal{N}_{\mathbb{R}_{-}^m}(\omega^k)$ for each $k\ge\widetilde{k}$. The osc property of the mapping $\mathcal{N}_{\mathbb{R}_{-}^m}$ implies $\widetilde{\lambda}\in\mathcal{N}_{\mathbb{R}_{-}^m}(g(\overline{x}))$. Thus, we obtain
 \(
  0\ne\widetilde{\lambda}\in{\rm Ker}(\nabla g(\overline{x}))\cap\mathcal{N}_{\mathbb{R}_{-}^m}(g(\overline{x})), 
 \)
 a contradiction to the MFCQ for the system $g(x)\in\mathbb{R}_{-}^m$ at $\overline{x}$. Hence, $\{(\lambda^k,y^k)\}_{k\in\mathcal{K}}$ is bounded. If necessary by taking a subsequence, we assume $\lim_{\mathcal{K}\ni k\to\infty}y^k=\overline{y}$. Note that $\xi^k\in-\partial\psi(y^k)$ for each $k\ge\widetilde{k}$. Together with the expression of $(\widetilde{\varpi}^k,\widetilde{\xi}^k,\widetilde{\lambda}^k,\widetilde{\zeta}^k)$, for each $k\ge\widetilde{k}$, it holds
 \[
(\widetilde{\varpi}^k+\widetilde{\xi}^k,\widetilde{\lambda}^k)\in{\rm pos}(\nabla\!f(x^k)\!+\!\partial\phi(x^k)-\partial\psi(y^k))\times\mathcal{N}_{\mathbb{R}_{-}^m}(\omega^k),
 \]
 where ${\rm pos}(C):=\{0\}\cup\big\{tx\,|\,t>0,x\in C\big\}$ is the positive hull of a set $C$ (see \cite[Section 3G]{RW98}).
 Passing the limit $\mathcal{K}\ni k\to\infty$ to the above inclusion and using the osc property of the mapping $\mathcal{N}_{\mathbb{R}_-^m}$ results in 
 \[
 (\widetilde{\varpi}+\widetilde{\xi},\widetilde{\lambda})\in\mathbb{R}^n\times\mathcal{N}_{\mathbb{R}_-^m}(g(\overline{x})).
 \]
On the other hand, passing the limit $\mathcal{K}\ni k\to\infty$ to the above \eqref{kl4} leads to
\[
\widetilde{\varpi}+\widetilde{\xi}+\nabla\!g(\overline{x})\widetilde{\lambda}=0\ \ {\rm and}\ \ \widetilde{\zeta}=0. 
\]
Then, the above two equations and $\|(\widetilde{\varpi}+\widetilde{\xi},\widetilde{\lambda})\|=1$ yield a contradiction to \eqref{cond-KL}. 
\end{proof}
\begin{remark}
 {\bf(a)}
 Note that $\Phi(z)=h(K(z))$ where $K\!:\mathbb{Z}\to\mathbb{R}^{n}\times\mathbb{R}^m\times\mathbb{R}^n$ is a smooth mapping defined by $K(z):=(x;G(x,s,L);\xi)$ for $z=(x,s,L,\xi)\in\mathbb{Z}$. For such a composite function, a criterion to identify its KL property of exponent $q\in[1/2,1)$ at $\overline{z}$ was provided in \cite[Theorem 3.2]{LiPong18} by the surjectivity of $K'(\overline{z})$ and the KL property of $h$ with exponent $q\in[1/2,1)$. Note that the surjectivity of $K'(\overline{z})$ is equivalent to ${\rm Ker}[I\ \ \nabla\!g(\overline{x})]=\{0_{\mathbb{R}^n}\}\times\{0_{\mathbb{R}^m}\}$ in view of $\overline{s}=\overline{x}$, which is obviously stronger than \eqref{cond-KL} because the set $\mathbb{R}^n\times\mathcal{N}_{\mathbb{R}_{-}^m}(g(\overline{x}))$ is  smaller than the whole space ${\mathbb{R}^n}\times\mathbb{R}^m$. Thus, the criterion in Proposition \ref{prop-KLexp} to identify the KL property of $\Phi$ with exponent $q\in[1/2,1)$ is weaker than that of \cite[Theorem 3.2]{LiPong18}. Moreover, the outer $h$ is a KL function of exponent $1/2$ when $f,\phi$ and $\psi$ are piecewise linear-quadratic functions; or when $\psi\equiv 0$ and $f+\phi$ is a composite function from \cite{ZhouSo17}.

\end{remark}

\section{Iteration complexity analysis}\label{sec5}

Complexity analysis of algorithms for constrained optimization problems concerns convergence to an approximate KKT point, so a reasonable measure for it is needed. For any $(x,\xi,\lambda)\in\mathbb{R}^n\times(-{\rm dom}\,\psi^*)\times\mathbb{R}_{+}^m$, define the Lagrange function of \eqref{prob} by 
\begin{equation}\label{Lfun}
 \mathcal{L}(x,\xi,\lambda):=f(x)+\phi(x)+\langle x,\xi\rangle+\psi^*(-\xi)+\langle\lambda,g(x)\rangle.
\end{equation}
 Compared with Definition \ref{spoint-def}, a vector $x\in\Gamma$ is a stationary point of \eqref{prob} if and only if there exist $\xi\in-\partial\psi(x)$ and $\lambda\in\mathbb{R}_{+}^m$ such that $(x,\xi,\lambda)$ satisfies the following system
\begin{subnumcases}{}     
 0\in\partial_x\mathcal{L}(x,\xi,\lambda)=\nabla\!f(x)+\partial\phi(x)+\xi+\nabla g(x)\lambda,\nonumber\\  
 0\in\partial_{\xi}\mathcal{L}(x,\xi,\lambda)=x-\partial\psi^*(-\xi),\nonumber\\     g(x)\in\mathbb{R}_{-}^m,\,\lambda\in\mathbb{R}_{+}^m,\,\langle g(x),\lambda\rangle=0,\nonumber
\end{subnumcases}
and the triple $(x,\xi,\lambda)$ is naturally called a KKT point of \eqref{prob}. Consider that the iterate sequence $\{x^k\}_{k\in\mathbb{N}}$ of Algorithm \ref{iMBA} belongs to the feasible set $\Gamma$. We adopt the measure
\[
 \chi(x,\xi,\lambda):=\max\big\{{\rm dist}(0,\partial_x\mathcal{L}(x,\xi,\lambda)),{\rm dist}(0,\partial_{\xi}\mathcal{L}(x,\xi,\lambda)),\sqrt{-\langle g(x),\lambda\rangle}\big\}
\]
for  $(x,\xi,\lambda)\in\Gamma\times(-{\rm dom}\,\psi^*)\times\mathbb{R}_{+}^m$ to introduce the notion of the $\epsilon$-KKT points. 
\begin{definition}\label{aKKT}
 A tripe $(x,\xi,\lambda)\in\mathbb{R}^n\times(-{\rm dom}\,\psi^*)\times\mathbb{R}_{+}^m$ is called an $\epsilon$-KKT point of \eqref{prob} if $\chi(x,\xi,\lambda)\le\epsilon$.
\end{definition}
When $\psi\equiv 0$, the $\epsilon$-KKT point in Definition \ref{aKKT} coincides with the one in \cite[Section 4.1]{Nabou24} for $q=1$, and the $\sqrt{\epsilon}$-KKT point in Definition \ref{aKKT} corresponds to the $\epsilon$-type I KKT point in \cite[Definition 3]{Boob24} that is superior to the $(\epsilon,\nu)$ type-II KKT point there. To analyze the iteration complexity of Algorithm \ref{iMBA} to achieve an $\epsilon$-KKT point, we first provide an upper bound for $\chi(x^{k+1},\xi^k,\lambda^{k+1})$ with $\|x^{k+1}-x^k\|$ as follows. 
\begin{proposition}\label{prop-cbound}
 Suppose that Assumption \ref{ass3} holds, that the system $g(x)\in\mathbb{R}_{-}^m$ satisfies the MFCQ at all feasible points, and that $\nabla\!f,\nabla\!g_1,\ldots,\nabla\!g_m$ are Lipschitz continuous on $\mathcal{O}$ with Lipschitz constants $L_{\!f},L_{g_1},\ldots,L_{g_m}$, respectively. Let $L_{g}\!:=\|(L_{g_1},\ldots,L_{g_m})\|$. Then, 
\begin{description}
\item[(i)] there exists $\widehat{c}_{R}>0$ such that $({\beta_{R}}/{2})\|x^{k+1}-x^k\|\le\widehat{c}_{R}$ for all $k\in\mathbb{N}$;

\item [(ii)] there exists $\beta_{\lambda}>0$ such that $\|\lambda^k\|\le\beta_{\lambda}$ for all $k\in\mathbb{N}$;

\item [(iii)] for each $k\in\mathbb{N}$, 
$\chi(x^{k+1},\xi^k,\lambda^{k+1})\le C(L_{\!f},L_{g},\beta_{R},\widehat{c}_{R},\beta_{L},\beta_{\mathcal{Q}},\beta_{\lambda})\|x^{k+1}\!-x^k\|$ with   $C(L_{\!f},L_{g},\beta_{R},\widehat{c}_{R},\beta_{L},\beta_{\mathcal{Q}},\beta_{\lambda})=\max\big\{(L_{\!f}+\widehat{c}_{R}+\beta_{\mathcal{Q}}\!+\!\beta_{\lambda}(L_g\!+\!\beta_{L})),1,\sqrt{\frac{\beta_{\lambda}(L_g\!+\!\beta_{L})+\beta_{R}}{2}}\big\}$.
\end{description}
\end{proposition}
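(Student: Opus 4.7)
The plan is to establish items (i), (ii), (iii) in sequence, exploiting that the inexactness condition \eqref{inexact1} controls the approximate stationarity and complementarity of the subproblem at $(x^{k+1},v^{k+1},\lambda^{k+1})$, and that item (i) will let us absorb every quadratic remainder $\tfrac{\beta_R}{2}\|x^{k+1}-x^k\|^2$ into a linear term in $\|x^{k+1}-x^k\|$. Item (i) is immediate: since $\{x^k\}_{k\in\mathbb{N}}$ is bounded by Assumption \ref{ass3}, the triangle inequality gives $\tfrac{\beta_R}{2}\|x^{k+1}-x^k\|\le\beta_R\sup_{k}\|x^k\|=:\widehat{c}_R$ for every $k$.

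For item (ii), I first observe that the MFCQ hypothesis on $g$ at every feasible point implies the Slater's CQ for $G(\cdot,x^k,L^{k,j})\in\mathbb{R}_{-}^m$ via Remark \ref{remark-ass4}, hence Assumption \ref{ass2} holds and Lemma \ref{lemma-bound} furnishes uniform bounds on $\xi^k$, $L^k$, and $\mathcal{Q}_k$. Unpacking \eqref{inexact1} via the expressions of $\nabla\vartheta_k$ and $\nabla_x G$ yields
\begin{equation*}
\bigl\|\nabla\!f(x^k)+\xi^k+v^{k+1}+\mathcal{Q}_k(x^{k+1}\!-\!x^k)+\nabla g(x^k)\lambda^{k+1}+\langle L^k,\lambda^{k+1}\rangle(x^{k+1}\!-\!x^k)\bigr\|\le\tfrac{\beta_R}{2}\|x^{k+1}\!-\!x^k\|^2,
\end{equation*}
together with the approximate complementarity $\langle\lambda^{k+1},G(x^{k+1},x^k,L^k)\rangle\ge-\tfrac{\beta_R}{2}\|x^{k+1}-x^k\|^2$. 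Supposing for contradiction that $\|\lambda^{k_j+1}\|\to\infty$ along some subsequence, I would use Assumption \ref{ass3}, $\|x^{k+1}-x^k\|\to 0$ from Corollary \ref{corollary-objF}, and the local boundedness of $v^{k+1}$ (Remark \ref{bound-subdiff}) to extract a further subsequence with $x^{k_j}\to x^*\in\omega(x^0)$ and $\lambda^{k_j+1}/\|\lambda^{k_j+1}\|\to\widetilde{\lambda}$, where $\|\widetilde{\lambda}\|=1$ and $\widetilde{\lambda}\ge 0$. Dividing both displayed estimates by $\|\lambda^{k_j+1}\|$ and passing to the limit gives $\nabla g(x^*)\widetilde{\lambda}=0$; combining with $\widetilde{\lambda}\ge 0$, $g(x^*)\le 0$ forces $\langle\widetilde{\lambda},g(x^*)\rangle=0$, so $0\ne\widetilde{\lambda}\in{\rm Ker}(\nabla g(x^*))\cap\mathcal{N}_{\mathbb{R}_{-}^m}(g(x^*))$, contradicting the MFCQ at $x^*$.

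For item (iii), I bound the three components of $\chi(x^{k+1},\xi^k,\lambda^{k+1})$ separately. Since $v^{k+1}\in\partial\phi(x^{k+1})$, the vector $\nabla\!f(x^{k+1})+v^{k+1}+\xi^k+\nabla g(x^{k+1})\lambda^{k+1}$ lies in $\partial_x\mathcal{L}(x^{k+1},\xi^k,\lambda^{k+1})$; adding and subtracting $\nabla\!f(x^k)+\nabla g(x^k)\lambda^{k+1}$ and applying the stationarity estimate from (ii), the Lipschitz constants $L_f,L_g$, the multiplier bound $\|\lambda^{k+1}\|\le\beta_\lambda$, the bounds $\|\mathcal{Q}_k\|\le\beta_\mathcal{Q},\|L^k\|\le\beta_L$, and item (i) together give the coefficient $L_f+\widehat{c}_R+\beta_\mathcal{Q}+\beta_\lambda(L_g+\beta_L)$ for ${\rm dist}(0,\partial_x\mathcal{L})$. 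For $\partial_\xi\mathcal{L}$, the inclusion $\xi^k\in\partial(-\psi)(x^k)\subset-\partial\psi(x^k)$ is equivalent by Fenchel inversion to $x^k\in\partial\psi^*(-\xi^k)$, so $x^{k+1}-x^k\in\partial_\xi\mathcal{L}(x^{k+1},\xi^k,\lambda^{k+1})$, yielding the coefficient $1$. For the complementarity term, the decomposition
\begin{equation*}
-\langle g(x^{k+1}),\lambda^{k+1}\rangle=-\langle G(x^{k+1},x^k,L^k),\lambda^{k+1}\rangle+\langle G(x^{k+1},x^k,L^k)-g(x^{k+1}),\lambda^{k+1}\rangle
\end{equation*}
bounds the first piece by the inexact complementarity $\tfrac{\beta_R}{2}\|x^{k+1}-x^k\|^2$ and the second via $\lambda^{k+1}\ge 0$ together with the elementwise estimate $G_i(x^{k+1},x^k,L^k)-g_i(x^{k+1})\le\tfrac{L_i^k+L_{g_i}}{2}\|x^{k+1}-x^k\|^2$ (from the Lipschitz property of $\nabla g_i$) and Cauchy--Schwarz, producing $\tfrac{\beta_R+\beta_\lambda(L_g+\beta_L)}{2}\|x^{k+1}-x^k\|^2$ and hence the square-root coefficient $\sqrt{(\beta_\lambda(L_g+\beta_L)+\beta_R)/2}$. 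Taking the maximum of the three coefficients delivers the claimed constant $C$.

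The main technical hurdle is the contradiction step in (ii): the subproblem feasibility $G(x^{k+1},x^k,L^k)\in\mathbb{R}_{-}^m$ holds only up to an $O(\|x^{k+1}-x^k\|^2)$ residual, so deriving $\widetilde{\lambda}\in\mathcal{N}_{\mathbb{R}_{-}^m}(g(x^*))$ requires rescaling by the diverging $\|\lambda^{k_j+1}\|$ and verifying that $\|\lambda^{k_j+1}\|^{-1}\|x^{k_j+1}-x^{k_j}\|^2\to 0$, which is what makes Corollary \ref{corollary-objF} and the local boundedness of $\partial\phi$ and $\partial(-\psi)$ essential inputs.
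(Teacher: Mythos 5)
Your proposal is correct and follows essentially the same route as the paper's proof: item (i) directly from the boundedness of $\{x^k\}_{k\in\mathbb{N}}$, item (ii) by normalizing a putatively unbounded multiplier sequence and contradicting the MFCQ via the limits of the rescaled residuals coming from \eqref{inexact1}, and item (iii) by bounding the three components of $\chi$ with exactly the paper's decomposition and the same constants. One small wording point in (ii): the equality $\langle\widetilde{\lambda},g(x^*)\rangle=0$ follows from the limit of the rescaled complementarity residual combined with the sign conditions $\widetilde{\lambda}\ge 0$, $g(x^*)\le 0$ (the sign conditions alone give only $\le 0$), which is indeed what your ``dividing both displayed estimates'' step supplies, so the argument is sound as intended.
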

\begin{proof}
 {\bf (i)-(ii)} Item (i) is immediate by Assumption \ref{ass3}, so it suffices to prove item (ii). Suppose on the contrary the sequence $\{\lambda^k\}_{k\in\mathbb{N}}$ is unbounded, i.e., $\lim_{k\to\infty}\|\lambda^k\|=\infty$. For each $k\in\mathbb{N}$, let $\widetilde{\lambda}^k\!:=\frac{\lambda^{k}}{\|\lambda^{k}\|}$. Then there exists an index set $\mathcal{K}\subset\mathbb{N}$ such that $\lim_{\mathcal{K}\ni k\to\infty}\widetilde{\lambda}^k=\widetilde{\lambda}$ for some $\widetilde{\lambda}\in\mathbb{R}_+^m$ with $\|\widetilde{\lambda}\|=1$. By Assumption \ref{ass3}, if necessary by taking a subsequence, we can assume $\lim_{\mathcal{K}\ni k\to\infty}x^{k}=\widetilde{x}$. For each $k\in\mathbb{N}$, let $\zeta^k\!:=\nabla\vartheta_k(x^{k+1})+v^{k+1}+\nabla_xG(x^{k+1},x^k,L^k)\lambda^{k+1}$. From the definition of $R_{k,j_k}$ in \eqref{resi-sub} and inequality \eqref{inexact1}, for each $k\in\mathbb{N}$, it holds that
 \begin{equation}\label{ineq0-itcomplex}
 \max\big\{\|\zeta^k\|,\|[G(x^{k+1},x^k,L^k)]_{+}\|_{\infty},\big[\!-\!\langle\lambda^{k+1},G(x^{k+1},x^k,L^k)\rangle\big]_+\big\}\le\frac{\beta_{R}}{2}\|x^{k+1}\!-x^k\|^2.
 \end{equation}
 Recall that $v^k\in\partial\phi(x^k)$ for all $k\in\mathbb{N}$. The sequence $\{v^k\}_{k\in\mathbb{N}}$ is bounded by Remark \ref{bound-subdiff}, so is the sequence $\{\nabla\vartheta_k(x^{k+1})\!+\!v^{k+1}\}_{k\in\mathbb{N}}$. Now dividing the inequality \eqref{ineq0-itcomplex} by $\|\lambda^{k+1}\|$, passing the limit $\mathcal{K}\ni k\to\infty$ and using $\lim_{k\to\infty}\|\lambda^k\|=\infty$ and $\lim_{k\to\infty}\|x^{k+1}\!-x^k\|=0$ leads to 
 \begin{equation}\label{result1}
 \nabla\!g(\widetilde{x})\widetilde{\lambda}=0,\,\|[g(\widetilde{x})]_{+}\|_{\infty}=0\ \ {\rm and}\ \ [-\langle\widetilde{\lambda},g(\widetilde{x})\rangle]_{+}=0.
 \end{equation}
 The third equality in \eqref{result1}, along with $\widetilde{\lambda}\in\mathbb{R}_+^m$ and $g(\widetilde{x})\in\mathbb{R}_-^m$, is equivalent to $\langle\widetilde{\lambda},g(\widetilde{x})\rangle=0$, and consequently, $\widetilde{\lambda}\in\mathcal{N}_{\mathbb{R}_{-}^m}(g(\widetilde{x}))$. Along with the first equality in \eqref{result1}, it follows $0\ne\widetilde{\lambda}\in {\rm Ker}(\nabla g(\widetilde{x}))\cap\mathcal{N}_{\mathbb{R}_{-}^m}(g(\widetilde{x}))$, a contradiction to the assumption that the system $g(x)\in\mathbb{R}_{-}^m$ satisfies the MFCQ at all feasible points. Thus, item (ii) holds. 

 \noindent
 {\bf (iii)} Fix any $k\in\mathbb{N}$. By the expression of $\mathcal{L}$ and the definition of $\zeta^k$ in the proof of item (ii), 
 \begin{align}\label{measure1}
 {\rm dist}(0,\partial_x\mathcal{L}(x^{k+1},\xi^k,\lambda^{k+1}))&=\|\nabla\!f(x^{k+1})+v^{k+1}+\xi^k+\nabla\!g(x^{k+1})\lambda^{k+1}\|\nonumber\\
 &=\|\nabla\!f(x^{k+1})\!+\!\zeta^k\!-\!\nabla\!f(x^k)\!-\!\mathcal{Q}_k(x^{k+1}\!-\!x^k)\nonumber\\
 &\quad +(\nabla\!g(x^{k+1})-\nabla\!g(x^{k}))\lambda^{k+1}\!-\!\langle L^k,\lambda^{k+1}\rangle(x^{k+1}\!-\!x^k)\|\nonumber\\
 &\le L_{\!f}\|x^{k+1}\!-\!x^k\|+\|\zeta^k\|+[\beta_{\mathcal{Q}}\!+\!\beta_{\lambda}(L_g\!+\beta_{L})]\|x^{k+1}\!-\!x^k\|\nonumber\\
 &\le\big(L_f+\widehat{c}_{R}+\beta_{\mathcal{Q}}+\beta_{\lambda}(L_g\!+\beta_{L})\big)\|x^{k+1}\!-\!x^k\|,
 \end{align}
 where the first inequality is got by using the Lipschitz continuity of $\nabla\!f,\nabla\!g_1,\ldots,\nabla\!g_m$ on $\mathcal{O}$ and Lemma \ref{lemma-bound} (ii)-(iii), and the second is due to $\|\zeta^k\|\le\widehat{c}_{R}\|x^{k+1}\!-x^k\|$ in view of \eqref{ineq0-itcomplex} and item (i). Since $\xi^k\in-\partial\psi(x^k)$, we have $x^k\in\partial\psi^*(-\xi^k)$, so that
 \begin{equation}\label{measure2}
 {\rm dist}(0,\partial_{\xi}\mathcal{L}(x^{k+1},\xi^k,\lambda^{k+1}))={\rm dist}(x^{k+1},\partial\psi^*(-\xi^k))\le\|x^{k+1}-x^k\|.
 \end{equation}
 Next we bound the complementarity violation. Recall that $\nabla\!g_1,\ldots,\nabla\!g_m$ are assumed to be Lipschitz continuous. Using the expression of $G$ and the descent lemma, for each $i\in[m]$, we have $\|g_i(x^{k+1})-G_i(x^{k+1},x^k,L^k)\|\le \frac{L_{g_i}+L_i^k}{2}\|x^{k+1}-x^k\|^2$, and consequently, 
 \begin{equation}\label{ineq-gG}
 \|g(x^{k+1})-G(x^{k+1},x^k,L^k)\|\le \frac{L_{g}+\beta_{L}}{2}\|x^{k+1}-x^k\|^2.
 \end{equation}
 Now combining inequality \eqref{ineq-gG} with the above \eqref{ineq0-itcomplex} leads to  
 \begin{align*}
  -\langle\lambda^{k+1},g(x^{k+1})\rangle
  &=\langle\lambda^{k+1},G(x^{k+1},x^k,L^k)-g(x^{k+1})-G(x^{k+1},x^k,L^k)\rangle\\
  &\le\langle\lambda^{k+1},G(x^{k+1},x^k,L^k)-g(x^{k+1})\rangle+[-\langle\lambda^{k+1},G(x^{k+1},x^k,L^k)\rangle]_{+}\\
  &\stackrel{\eqref{ineq0-itcomplex}}{\le}\|\lambda^{k+1}\|\|G(x^{k+1},x^k,L^k)-g(x^{k+1})\|+\frac{\beta_{R}}{2}\|x^{k+1}-x^k\|^2\\
  &\stackrel{\eqref{ineq-gG}}{\le}\frac{\beta_{\lambda}(L_g+\beta_{L})+\beta_{R}}{2}\|x^{k+1}-x^k\|^2.
 \end{align*}
 Together with \eqref{measure1}-\eqref{measure2} and the definition of $\chi(x,\xi,\lambda)$, we obtain the desired result.  
\end{proof}

Now we are ready to establish the iteration complexity result of Algorithm \ref{iMBA}. 
\begin{theorem}\label{complexity}
 Under the assumptions of Proposition \ref{prop-cbound}, for any $K\in\mathbb{N}$, it holds
 \[
  \min_{0\le k\le K}\chi(x^{k+1},\xi^k,\lambda^{k+1}) \le\frac{C(L_{\!f},L_{g},\beta_{R},\widehat{c}_{R},\beta_{L},\beta_{\mathcal{Q}},\beta_{\lambda})}{\sqrt{K+1}}\sqrt{\frac{2(F(x^0)-{\rm val}^*)}{\alpha}},
 \]
 So an $\epsilon$-KKT point is achieved within  $\lceil\frac{2C^2(L_{\!f},L_{g},\beta_{R},\widehat{c}_{R},\beta_{L},\beta_{\mathcal{Q}},\beta_{\lambda})(F(x^0)-{\rm val}^*)}{\alpha\epsilon^2}\!-\!1\rceil$ steps. 
\end{theorem}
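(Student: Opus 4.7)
The plan is to reduce the theorem to a telescoping descent-summation argument combined with the relative stationarity bound already established in Proposition \ref{prop-cbound}(iii). The main work has been done upstream: Proposition \ref{prop-cbound}(iii) converts the KKT-type measure $\chi(x^{k+1},\xi^k,\lambda^{k+1})$ into a multiple of $\|x^{k+1}-x^k\|$, and Corollary \ref{corollary-objF} provides the sufficient decrease $F(x^{k+1}) \le F(x^k) - \tfrac{\alpha}{2}\|x^{k+1}-x^k\|^2$. So the remaining task is a routine ``best iterate'' averaging.

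First I would telescope the sufficient decrease from $k=0$ to $k=K$, using the lower bound $F(x^{K+1})\ge {\rm val}^*$ guaranteed by Assumption \ref{ass1}(iv) together with $\{x^k\}_{k\in\mathbb{N}}\subset\Gamma$ from Lemma \ref{lemma-welldef}. This yields
\[
\frac{\alpha}{2}\sum_{k=0}^{K}\|x^{k+1}-x^k\|^2 \le F(x^0)-F(x^{K+1}) \le F(x^0)-{\rm val}^*.
\]
Dividing by $K+1$ and bounding the average by the minimum gives
\[
\min_{0\le k\le K}\|x^{k+1}-x^k\|^2 \le \frac{2(F(x^0)-{\rm val}^*)}{\alpha(K+1)}.
\]

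Next I would apply Proposition \ref{prop-cbound}(iii), which is valid for every $k\in\mathbb{N}$ under the hypotheses assumed here, to obtain
\[
\min_{0\le k\le K}\chi(x^{k+1},\xi^k,\lambda^{k+1}) \le C(L_{\!f},L_g,\beta_R,\widehat{c}_R,\beta_L,\beta_{\mathcal{Q}},\beta_\lambda)\min_{0\le k\le K}\|x^{k+1}-x^k\|,
\]
and then substitute the bound from the telescoping step. This produces the first displayed inequality of the theorem.

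For the complexity count, I would set the right-hand side of the first inequality $\le\epsilon$ and solve for $K$: the inequality $\tfrac{C}{\sqrt{K+1}}\sqrt{2(F(x^0)-{\rm val}^*)/\alpha}\le\epsilon$ is equivalent to $K+1\ge 2C^2(F(x^0)-{\rm val}^*)/(\alpha\epsilon^2)$, which gives the announced bound $\lceil 2C^2(F(x^0)-{\rm val}^*)/(\alpha\epsilon^2)-1\rceil$. There is no real obstacle in this proof; the genuinely technical content (boundedness of $\{L^k\},\{\mathcal{Q}_k\},\{\lambda^k\}$ under the MFCQ, and the one-step relative error estimate on $\chi$) has already been absorbed into Lemma \ref{lemma-bound} and Proposition \ref{prop-cbound}, and only a telescoping argument and the definition of ``best iterate'' remain.
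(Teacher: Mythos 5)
Your proposal is correct and follows essentially the same route as the paper: the paper likewise combines Proposition \ref{prop-cbound}(iii) with the telescoped sufficient-decrease inequality \eqref{F-decrease} and the lower bound ${\rm val}^*$, the only cosmetic difference being that it sums $\chi^2(x^{k+1},\xi^k,\lambda^{k+1})\le 2C^2(F(x^k)-F(x^{k+1}))/\alpha$ directly rather than first minimizing $\|x^{k+1}-x^k\|^2$, which is an equivalent rearrangement. The final counting step for the $\epsilon$-KKT bound is also the same.
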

\begin{proof} 
 Fix any $K\in\mathbb{N}$. From Proposition \ref{prop-cbound} (iii) and the previous \eqref{F-decrease}, it holds 
 \begin{align*}
 \sum_{k=0}^K\chi^2(x^{k+1},\xi^k,\lambda^{k+1})&\le\sum_{k=0}^K \frac{2C^2(L_{\!f},L_{g},\widehat{c}_{R},\beta_{\lambda},\beta_{L},\beta_{R})(F(x^k)-F(x^{k+1}))}{\alpha}\\
 &\le \frac{2C^2(L_{\!f},L_{g},\widehat{c}_{R},\beta_{\lambda},\beta_{L},\beta_{R})(F(x^0)-{\rm val}^*)}{\alpha},
 \end{align*}
 which implies the desired conclusion. The proof is completed.
\end{proof}
\begin{remark}\label{remark-complexity}
The complexity bound $O(1/\epsilon^2)$ in Theorem \ref{complexity} has the same order as the one in \cite[Theorem 4.3]{Nabou24} for the exact MBA method to achieve an $\epsilon$-KKT point of \eqref{prob} with $\psi\equiv 0$, and does as the one in \cite[Theorem 8]{Boob24} for the inexact LCPG method to seek an $(\epsilon^2,\delta)$ type-II KKT point, worse than the $\epsilon$-KKT point. The complexity bound depends on the constant $C(L_{\!f},L_{g},\beta_{R},\widehat{c}_{R},\beta_{L},\beta_{\mathcal{Q}},\beta_{\lambda})$, 
determined by the data constants $L_{\!f},L_{g}$, the parameter $\beta_{R}$ of Algorithm \ref{iMBA}, and the constants $\widehat{c}_{R},\beta_{L},\beta_{\mathcal{Q}}$ and $\beta_{\lambda}$. Among others, $\beta_{R}$ is provided by the user, and $\beta_{L},\beta_{\mathcal{Q}}$ can be estimated with the setup of parameters of Algorithm \ref{iMBA} in view of Lemma \ref{inner-step}. Recall that $\{x^k\}_{k\in\mathbb{N}}\subset\mathcal{L}_{F(x^0)}$. As long as $F$ is level-coercive, $\widehat{c}_{R}$ can be bounded by the diameter of $\mathcal{L}_{F(x^0)}$. The constant $\beta_{\lambda}$ is dependent on the CQ for the system $g(x)\in\mathbb{R}_{-}^m$, and its estimation is extremely challenging. When the subproblems are exactly solved, Boob et al. provided an estimation of $\beta_{\lambda}$ by requiring the compactness of $\Gamma$ and strong feasibility CQ (see \cite[Lemma 2]{Boob24}). When the feasible set of \eqref{prob} satisfies the strict feasibility as in \cite[Assumption 2]{Boob24}, by requiring the initial $x^0$ to satisfy $g(x^0)\le\eta^0$ for some $\eta^0<0$ and replacing the constraint sets of subproblems by $\Gamma_{k,j}\!:=[G(\cdot,x^k,L^{k,j})+\eta^0]^{-1}(\mathbb{R}_{-}^m)$, we can employ the inexactness criterion \eqref{inexact1} to estimate $\beta_{\lambda}$. Since this requires more restriction on problem \eqref{prob} and is out of the theme of this work, we leave it for a future research topic. 
\end{remark}

\section{Numerical experiments}\label{sec6}

In this section, we conduct some numerical studies to examine our theoretical results and the performance of Algorithm \ref{iMBA}. All experiments are performed on a desktop running on Matlab R2020b and 64-bit Windows System with an Intel(R) Core(TM) i9-10850K CPU 3.60GHz and 32.0 GB RAM. 
\subsection{Implementation details of Algorithm \ref{iMBA}}\label{sec6.1}

The core of Algorithm \ref{iMBA} at each iteration is to seek an inexact minimizer of subproblem \eqref{subprobkj} satisfying conditions \eqref{inexact1}-\eqref{inexact2}, so we first take a look at the solving of \eqref{subprobkj}.
\subsubsection{Solving of subproblems}\label{sec6.1.1}

Fix any $k,j\in\mathbb{N}$. For the PD linear mapping $\mathcal{Q}_{k,j}\!:\mathbb{R}^n\to\mathbb{R}^n$ in step 5, we choose it to be of the form $\mathcal{Q}_{k,j}=\mu_{k,j}\mathcal{I}+\!\mathcal{A}_{k,j}^*\mathcal{A}_{k,j}$, where $\mathcal{A}_{k,j}\!:\mathbb{R}^n\to\mathbb{R}^{p}$ is a linear mapping. Such a structure of $\mathcal{Q}_{k,j}$ makes the dual of \eqref{subprobkj} have a simple composite form. Indeed, by letting $b^k\!:=\nabla\!f(x^k)+\xi^k,d^{k,j}\!:=\mathcal{A}_{k,j}x^k$ and $C^k\!:=f(x^k)-\psi(x^k)$, subproblem \eqref{subprobkj} can equivalently be written as 
\begin{align}\label{EEsubprob}
&\min_{x,s\in\mathbb{R}^n,y\in\mathbb{R}^p}\frac{1}{2}\|y\|^2+\frac{1}{2}\mu_{k,j}\|x-x^k\|^2+\langle b^k,x-x^k\rangle+\phi(s)+C^k\nonumber\\
&\qquad {\rm s.t.}\quad G(x,x^k,L^{k,j})\le 0,\ x-s=0,\ \mathcal{A}_{k,j}x-y=d^{k,j}.
\end{align}
After an elementary calculation, the dual of \eqref{EEsubprob} has the following composite form
\begin{equation}\label{dual}	
\!\min_{w\in\mathbb{W}}\Xi_{k,j}(w)\!:=\!\underbrace{\frac{\|\mathcal{B}_{k,j}(w)+b^k\|^2}{2(\mu_{k,j}\!+\!\langle\lambda,L^{k,j}\rangle)}\!-\!\langle\eta,x^k\rangle-\!\langle\lambda,g(x^k)\rangle\!+\!\frac{1}{2}\|\zeta\|^2\!-\!C^k}_{\Theta_{k,j}(w)}+\delta_{\mathbb{R}_{+}^m}(\lambda)\!+\phi^*(\eta)
\end{equation}
where $\Theta_{k,j}$ is a twice continuously differentiable convex function on the space $\mathbb{W}:=\mathbb{R}^{m}\times\mathbb{R}^{n}\times\mathbb{R}^{p}$, and $\mathcal{B}_{k,j}\!:\mathbb{W}\to\mathbb{R}^n$ is a linear mapping defined by
\[
 \mathcal{B}_{k,j}(w):=\nabla\!g(x^k)\lambda+\eta+\mathcal{A}_{k,j}^*\zeta\quad\forall w\in\mathbb{W}.
\]

According to \cite[Theorem 2.142]{BS00}, the strong convexity of \eqref{subprobkj} guarantees that there is no dual gap between \eqref{subprobkj} and \eqref{dual}, and \eqref{dual} has a nonempty optimal solution set. Inspired by this, we seek an inexact minimizer of \eqref{subprobkj} satisfying \eqref{inexact1}-\eqref{inexact2} by solving its dual. Considering that the gradient of $\Theta_{k,j}$ is not Lipschitz continuous, we choose the PG method with line search (PGls) as the solver to \eqref{dual}. The iteration steps of PGls are described in Algorithm \ref{PGls} below, and its convergence certificate can be found in \cite{Kanzow22}. In the sequel, we call Algorithm \ref{iMBA} armed with Algorithm \ref{PGls} iMBA-pgls. 
\begin{algorithm}
 \renewcommand{\thealgorithm}{A}
 \caption{\label{PGls}{\bf (PGls for solving problem \eqref{dual})}}
 \begin{algorithmic}[1]
 \State{\textbf{Input:}\ $0<\tau_{\rm min}\le\tau_{\rm max},\,\varrho>1,\,\delta\in(0,1),w^0=(\lambda^0,\eta^0,\zeta^0)\in\mathbb{R}_{+}^m\times\mathbb{R}^n\times{\rm dom}\,\phi^*$.}
 \For{$l=0,1,2,\ldots$}
 \State{Choose $\tau_{l,0}\in[\tau_{\rm min},\tau_{\rm max}]$.} 

  \For{$\nu=0,1,2,\ldots$}		
  
  \State{Let $\tau_{l,\nu}=\tau_{l,0}\varrho^{\nu}$. Compute an optimal solution $w^{l,\nu}$ of the problem
  \begin{equation*}
   \qquad\min_{w\in\mathbb{W}}\Theta_{k,j}(w^{l})+\langle\nabla\Theta_{k,j}(w^{l}),w-w^{l}\rangle+\frac{\tau_{l,\nu}}{2}\|w-w^{l}\|^2+\delta_{\mathbb{R}_{+}^m}(\lambda)\!+\phi^*(\eta).
  \end{equation*}
  \hspace*{1.0cm} If $\Xi_{k,j}(w^{l,\nu})\le \Xi_{k,j}(w^{l})-(\delta\tau_{l,\nu}/2)\|w^{l,\nu}-w^l\|^2$, go to step 7.}
\EndFor		
\State{Set $\nu_l:=\nu$ and $w^{l+1}:=w^{l,\nu_l}$. }
\EndFor
\end{algorithmic}
\end{algorithm}
\subsubsection{Stop conditions of iMBA-pgls}\label{sec6.1.2}

According to Remark \ref{remark-alg} (d), we terminate Algorithm \ref{iMBA} at the $k$th iteration whenever $\|x^k\!-\!x^{k-1}\|\le\epsilon$ or $k>10^4$. Since Algorithm \ref{iMBA} produces in each iteration a feasible point $x^k\in\Gamma$ and a nonnegative $\lambda^k$, we also terminate its iteration once $[-\langle\lambda^k,g(x^k)\rangle]_{+}\le\epsilon_1$ and $k\ge 500$, where the tolerances $\epsilon>0$ and $\epsilon_1>0$ are specified in the experiments. 

In what follows, we discuss how to terminate Algorithm \ref{PGls}, i.e., how to seek an inexact minimizer of \eqref{subprobkj} satisfying \eqref{inexact1}-\eqref{inexact2} by solving its dual. For each $w\in\mathbb{W}$, let $\mathcal{L}(x,s,y;w)$ be the Lagrange function of \eqref{EEsubprob} associated with $w$. Note that the minimization problem $\min_{x,s\in\mathbb{R}^n,y\in\mathbb{R}^p}\mathcal{L}(x,s,y;w)$ is strongly convex w.r.t. $x$ and $y$, and $(x^*,s^*,y^*)$ is its optimal solution iff 
$x^*\!=x^k-\frac{b^k+\mathcal{A}_{k,j}^*\zeta+\eta+\nabla\!g(x^k)\lambda}{\mu_{k,j}+\langle\lambda,L^{k,j}\rangle}$, $\eta\in\partial\phi(s^*)$ and $y^*=\zeta$. It is not hard to verify that if $\eta^{+}={\rm prox}_{\tau^{-1}\phi^*}(\eta-\tau^{-1}\nabla_{\!\eta}\Theta_{k,j}(w))$ for a step size $\tau>0$, then $\eta^+\in\partial\phi(\widetilde{s})$ with $\widetilde{s}=\tau(\eta-\eta^{+})-\nabla_{\!\eta}\Theta_{k,j}(w)=\tau(\eta-\eta^{+})+x^*$. Apparently, when $\eta^{+}$ is sufficiently close to $\eta$, such $\widetilde{s}$ is actually a small perturbation of $x^*$, and when $w$ is an optimal solution of \eqref{dual}, the associated $(x^*,\widetilde{s}^*,y^*)$ is optimal to \eqref{EEsubprob}. When applying the PGls to solve \eqref{dual}, the generated iterate sequence $\{\eta^l\}_{l\in\mathbb{N}}$ does satisfy such a relation, i.e., $\eta^{l+1}\!={\rm prox}_{\tau_l^{-1}\phi^*}(\eta^l\!-\!\tau_l^{-1}\nabla_{\!\eta}\Theta_{k,j}(w^l))$. In view of this, at the current $w^l=(\lambda^l,\eta^l,\zeta^l)$, we set $x_{k,j}^{l}:=x^k-\frac{b^k+\mathcal{A}_{k,j}^*\zeta^l+\eta^l+\nabla\!g(x^k)\lambda^l}{\mu_{k,j}+\langle\lambda^l,L^{k,j}\rangle}$, and check whether the triple $(x_{k,j}^{l},\eta^l,\lambda^l)$ satisfies \eqref{inexact1}-\eqref{inexact2}. If the following inequalities 
\begin{subnumcases}{}\label{inexact11}
 R_{k,j}(x_{k,j}^{l},\eta^{l},\lambda^{l})\le \frac{\beta_{R}}{2}\|x^{k,l}-x^k\|^2,\nonumber\\
 \label{inexact21}
 F_{k,j}(x_{k,j}^{l})\le F_{k,j}(x^k),\,F_{k,j}(x_{k,j}^{l})-\Xi_{k,j}(w^{l})\le \frac{\beta_{F}}{2}\|x^{k,l}-x^k\|^2\nonumber
\end{subnumcases}
hold, we terminate Algorithm \ref{PGls} at the iterate $w^{l}$, and $x_{k,j}^{l}$ serves as the inexact minimizer $y^{k,j}$ of \eqref{subprob}, which along with $(\eta^{l},\lambda^{l})$ satisfies conditions \eqref{inexact1}-\eqref{inexact2}. 
\subsubsection{Setup of parameters}\label{sec6.1.3}

As mentioned in Section \ref{sec6.1.1}, we take $\mathcal{Q}_{k,j}=\mu_{k,j}\mathcal{I}+\mathcal{A}_{k,j}^*\mathcal{A}_{k,j}$. The choice of the linear operator $\mathcal{A}_{k,j}$ depends on the second-order information of $f$; see the subsequent experiments. Since $\mathcal{A}_{k,j}^*\mathcal{A}_{k,j}$ is usually positive semidefinite, the choice of $\mu_{k,j}$ has a great influence on the approximation effect of $F_{k,j}$ to $F$. Similarly, the choice of $L^{k,j}$ determines the approximation effect of $\Gamma_{k,j}$ to $\Gamma$. Our preliminary tests show that the updating rules of $L^{k,j}$ and $\mu_{k,j}$ in steps 7-8 work well with $\tau=2$, from $\mu_{k,0}$ and $L^{k,0}$ given by \eqref{BB-muk0}-\eqref{BB-Lk0} with $\mu_{\min}\!=\!10^{-16},\mu_{\max}\!=\!10^{16},L_{\min}\!=\!10^{-16},L_{\max}=10^{16}$, as long as the initial $\mu_{0,0}$ and $L^{0,0}$ are appropriately chosen. Apparently, the larger $\mu_{0,0}$ and $L^{0,0}$ make the first subproblem (so the subsequent ones) have a big gap from the original problem \eqref{prob} though its computation is relatively easy, and now iMBA-pgls will return a bad stationary point. In view of this, we choose $\mu_{0,0}=\,{\rm lip}\,\nabla\!f(x^0)$ and $L^{0,0}=0.05\,\ell_{\nabla\!g}(x^0)$, where ${\rm lip}\,\nabla\!f(x^0)$ and $\ell_{\nabla\!g}(x^0)$ can be estimated by the Barzilai-Borwein rule \cite{Barzilai88}. Of course, they can be got by estimating $\|\nabla^2\!f(x^0)\|$ and $\|\nabla^2g_i(x^0)\|$ for each $i\in[m]$ if $f$ and $g$ are twice continuously differentiable on an open set containing $x^0$. 

From the preliminary tests, we observe that $\|y^{k,j}\!-x^k\|$ is usually $O(10^{-5})$ smaller than $R_{k}(y^{k,j},v^{k,j},\lambda^{k,j})$. Therefore, we choose $\beta_{R}=10^{10}$ and $\beta_{F}=10^8$ so that the inexactness conditions \eqref{inexact1}-\eqref{inexact2} are easily satisfied. We also find that Algorithm \ref{iMBA} is robust to $\alpha\in[10^{-12},10^{-4}]$, so choose $\alpha=10^{-6}$ for the tests. To sum up, for the subsequent tests, the parameters of Algorithm \ref{iMBA} are chosen as follows: 
\begin{align*} \mu_{\min}=10^{-16},\,\mu_{\max}=10^{16},\,L_{\min}=10^{-16},\,L_{\max}=10^{16},\,\tau=2,\qquad\\
\beta_{R}\!=\!10^{10},\,\beta_{F}\!=\!10^8,\,\alpha=10^{-6},\,\mu_{0,0}={\rm lip}\,\nabla\!f(x^0),\,L^{0,0}=0.05\,\ell_{\nabla\!g}(x^0).
\end{align*}

Next we look at the choice of the parameters in Algorithm \ref{PGls}. Fix any $k,j\in\mathbb{N}$. Our preliminary tests indicate that Algorithm \ref{PGls} is very robust to the parameter $\delta$ whenever $\delta\le 10^{-2}$. However, the initial $\tau_{0,0}$ or the step-size $\tau_{l,\nu}^{-1}$ has a great impact on the efficiency of Algorithm \ref{PGls}. Apparently, a smaller initial $\tau_{0,0}$ will lead to a larger step-size. Based on this, we choose $\tau_{0,0}=10^{-8}\|\nabla\!g(x^k)\|^2$. The following parameters
\[  \delta=10^{-6},\,\varrho=10,\,\tau_{0,0}=10^{-8}\|\nabla\!g(x^k)\|^2\ \ {\rm and}\ \ l_{\rm max}=2000
\]
are used for the subsequent tests, where $l_{\rm max}$ is the maximum number of iterations. 
\subsection{Validation of theoretical results}\label{sec5.2}

For each $k$ with $x^k$ being a non-stationary point of problem \eqref{prob}, by Lemma \ref{lemma-welldef} (ii) or Lemma \ref{inner-step}, the inner loop of Algorithm \ref{iMBA} necessarily stops within a finite number of steps. Figure \ref{fig1} (a) plots the number of steps required by the inner loop in each iteration. We see that the number of steps for the inner loop is basically not more than $\textbf{3}$, and attains $\textbf{3}$ only at several iterations of the outer loop. This validates the results obtained in Lemma \ref{lemma-welldef} (ii) and Lemma \ref{inner-step}. Fix any $k\in\mathbb{N}$. When the inner loop stops within a finite number of steps, it is natural to ask how many iterations does Algorithm \ref{PGls} require to solve every subproblem. Figure \ref{fig1} (b) plots the maximum number of iterations required by Algorithm \ref{PGls} for solving the total $j_k+1$ subproblems. We see that as $k$ increases, the maximum number of iterations for solving the total $j_k+1$ subproblems gradually becomes larger, and is generally not more than $\textbf{40}$. This indicates that for this class of nonconvex and nonsmooth test instances, the iterates satisfy the inexactness conditions well, which validates the result of Lemma \ref{lemma-welldef} (i).  
\begin{figure}[h]
 \centering
 \subfigure[\label{fig1a}]{\includegraphics[scale=0.43]{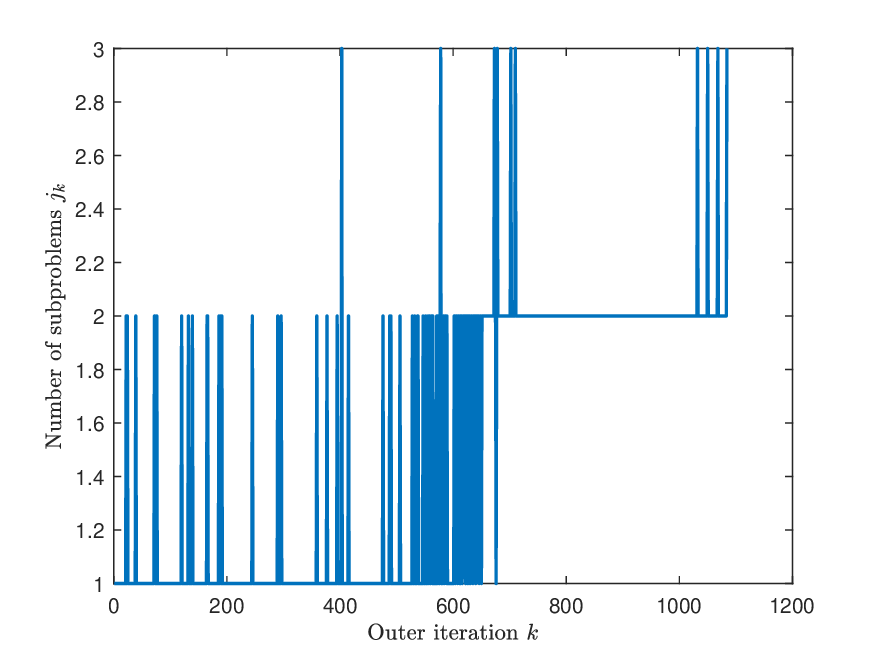}}
 \subfigure[\label{fig1b}]{\includegraphics[scale=0.43]{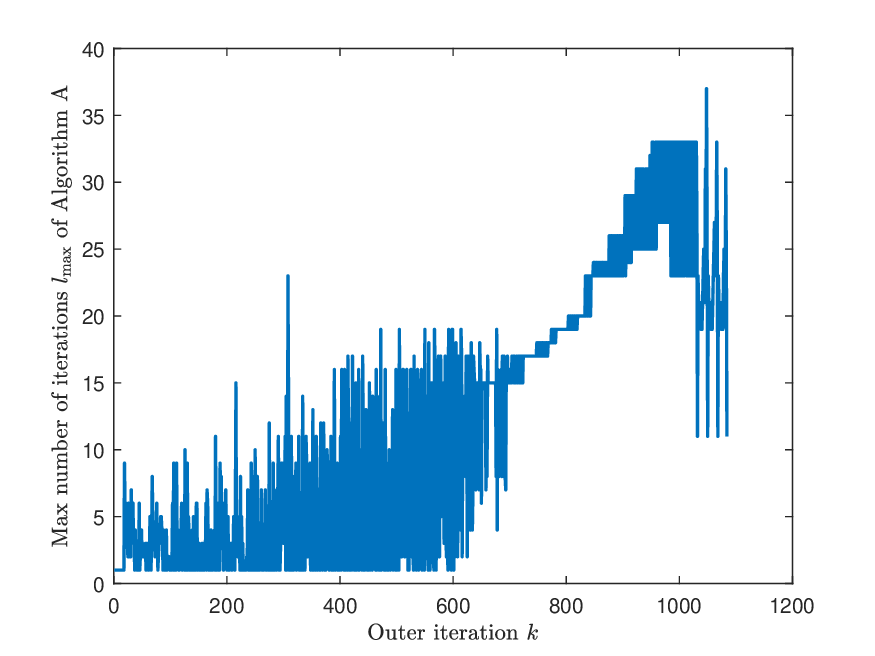}} 
 \setlength{\abovecaptionskip}{2pt}
 \setlength{\belowcaptionskip}{2pt}
 \caption{Performance of inner loop of Algorithm \ref{iMBA} for the test problem with $(n,m)=(1000,100)$ and $\omega_0=10^3$ from Section \ref{sec6.3.2}}
 \label{fig1}
 \end{figure}

 Figure \ref{fig_rate} plots the convergence behavior of the iterate sequence of Algorithm \ref{iMBA}. By Theorem \ref{globalconv}, the iterate sequence is convergent under Assumptions \ref{ass2}-\ref{ass4} and the KL property of $\Phi$. The function $\Phi$ corresponding to $F$ with $f,\phi$ and $g$ from Section \ref{sec6.3.2} is semialgebraic, so is a KL function of exponent $q$ for some $q\in(0,1)$. Since $f$ is a convex quadratic function, $\phi$ is the $\ell_1$-norm and $\psi\equiv 0$, the function $F$ is coercive, so Assumption \ref{ass3} holds. While Assumption \ref{ass4} automatically holds if the constraint $g(x)\in\mathbb{R}_{-}^m$ satisfies the MFCQ at every feasible point. Since it is highly possible for the latter to hold when $n\approx m$ or $n\gg m$, the iterate sequences of Algorithm \ref{iMBA} for those problems with $n\approx m$ or $n\gg m$ will have better convergence. The curves of Figure \ref{fig_rate} indeed validate this result, and also demonstrate the linear convergence of the iterate sequences. Note that  $\Phi(z)=h(x,G(x,s,L),\xi)$ with $h$ being a piecewise linear-quadratic function for this group of test instances. Such $h$ satisfies the requirement of Proposition \ref{prop-KLexp}. Thus, the linear convergence behavior of the curves in Figure \ref{fig_rate} means that the condition \eqref{cond-KL} also holds for  this group of test instances.     
\begin{figure}[h]
\centering
\subfigure[\label{fig1a}]{\includegraphics[scale=0.44]{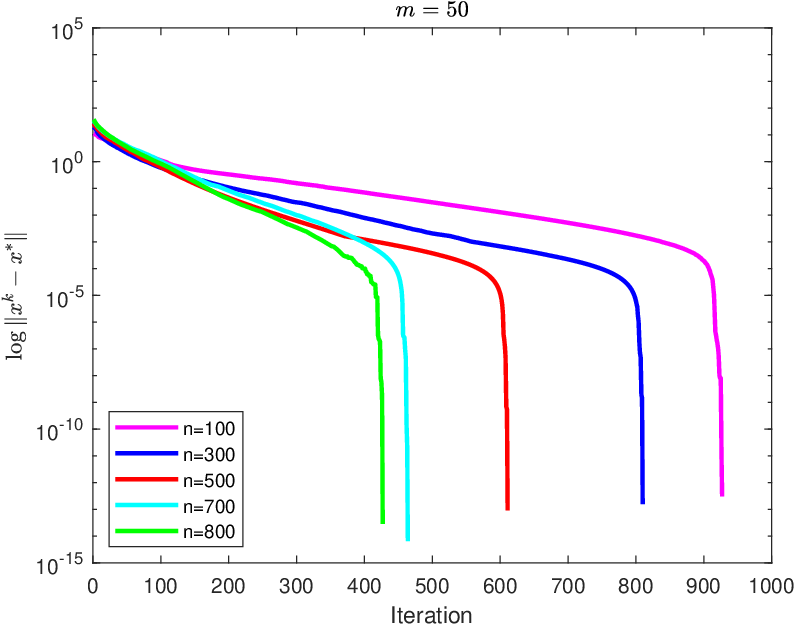}}
\subfigure[\label{fig1b}]{\includegraphics[scale=0.43]{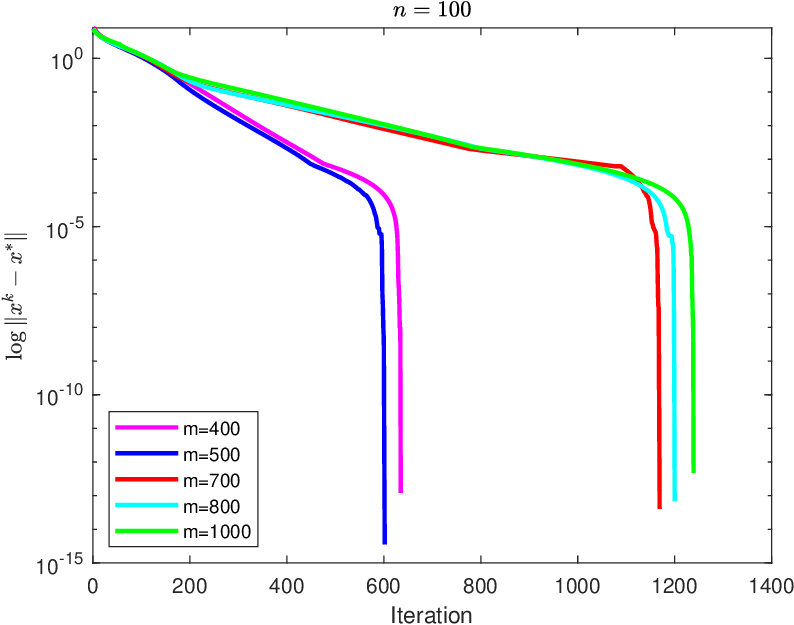}}
\setlength{\abovecaptionskip}{2pt}
\setlength{\belowcaptionskip}{2pt}
\caption{Convergence behavior of the iterate sequences generated by Algorithm \ref{iMBA} for solving the test problems from Section \ref{sec6.3.2} with $\psi\equiv 0$ and $\omega^0=10^4$}
\label{fig_rate}
\end{figure}
\subsection{Numerical comparisons for QCDC problems}\label{sec5.3}
 
We shall test the performance of iMBA-pgls for solving quadratically constrained DC (QCDC) problems. The constraint functions of QCDC problems take the form of 
\begin{equation}\label{mapg}
 g_i(x):=x^{\top}Q_ix-x^{\top}P_ix+2\langle b_i,x\rangle +c_i\quad\forall i\in[m],
\end{equation}
where $Q_1,\ldots,Q_m$ are the $n\times n$ positive definite matrices, $P_1,\ldots,P_m$ are the $n\times n$ positive semidefinite matrices, $b_1,\ldots,b_m\in\mathbb{R}^n$, and $c_1,\ldots,c_m\in\mathbb{R}$. We follow the same way as in \cite{Auslender10} to generate the $m$ positive definite matrices $Q_i$. That is, every $Q_i$ is of the form $Q_i=Y_iD_iY_i$, where $D_i$ is an $n\times n$ diagonal matrix with diagonal elements shuffled randomly from the set $\{10^{\frac{10(j-1)}{n-1}}\,|\,j\in[n]\}$, and $Y_i=I-2\frac{y_iy_i^{\top}}{\|y_i\|^2}$ is an $n\times n$ random Householder orthogonal matrix with $I$ being the $n\times n$ identity matrix, and the components of $y_i\in\mathbb{R}^n$ chosen randomly from $(-1,1)$. Clearly, the spectral norm $\|Q_i\|$ of every $Q_i$ equals $10^{10}$. The $m$ positive semidefinite $P_i$ are specified later. To generate the data $b_i$ and $c_i$ for $i\in[m]$ such that the feasible set $\Gamma=g^{-1}(\mathbb{R}_{-}^m)$ is nonempty, we consider an equivalent reformulation of $g(x)\le 0$. By the expression of $Q_i$, we have $Q_i=B_i^{\top}B_i$ with $B_i=D_i^{1/2}Y_i$ being nonsingular. Then, for each $i\in[m]$, with $h_i=(B_i^{-1})^{\top}b_i$ and $d_i^2=\|h_i\|^2-c_i$, the constraint $g_i(x)\le 0$ is reformulated as 
\[
 g_i(x)=\|B_ix+h_i\|^2-x^{\top}P_ix-d_i^2\le 0\quad\forall i\in[m].
\]
 Let $x^0\in\mathbb{R}^n$ be a vector generated randomly. To ensure that $x^0\in\Gamma$, for each $i\in[m]$, we randomly choose the components of $h_i$ from $(-1,1)$ and $s_i$ from $[0,1)$, and then set $d_i^2=\|B_ix^0+h_i\|^2-x^{\top}P_ix+s_i$. Clearly, $\|B_ix^0+h_i\|^2-x^{\top}P_ix-d_i^2\le 0$ due to  the nonnegativity of $s_i$, and consequently $x^0\in\Gamma$. For each $i\in[m]$, after generating $h_i$ and $d_i$, we set $b_i=B_i^{\top}h_i$ and $c_i=\|h_i\|^2-d_i^2$. 
\subsubsection{$\ell_1$-norm regularized convex QCQPs}\label{sec6.3.1}

This class of problems takes the form of problem \eqref{prob} where every component of $g$ is given by \eqref{mapg} with $P_1=\cdots=P_m=0$, and the functions $f,\phi$ and $\psi$ are specified as 
\[
 f(x)\!:=\|Y_0x\|^2+2\omega_{0}(b_0/\|b_0\|)^{\top}x,\ \phi(x)\!:=0.01\|x\|_1\ \ {\rm and}\ \ \psi\equiv 0. 
 \]
 Here, $\omega_0$ is a constant, $Y_0\in\mathbb{R}^{p\times n}$ with $p=\lfloor n/2\rfloor$ is a matrix, and $b_0\in\mathbb{R}^n$ is a vector. We choose $\omega_0=10$ and $\omega_0=10^4$ to generate two classes of problems whose objective values have different scales. The entries of $Y_0$ and $b_0$ are generated randomly to obey the standard normal distribution. For each $k,j\in\mathbb{N}$, we choose the linear mapping $\mathcal{A}_{k,j}\!:\mathbb{R}^n\to\mathbb{R}^p$ to be $\mathcal{A}_{k,j}x=Y_0x$ for $x\in\mathbb{R}^n$, so the linear operator $\mathcal{Q}_{k,j}$ has the expression $\mathcal{Q}_{k,j}x=\mu_{k,j}x+Y_0^{\top}Y_0x$ for $x\in\mathbb{R}^n$. 

 We set the starting point for iMBA-pgls as $x^0$, which is precisely the one used to generate $c_i$ in \eqref{mapg}.
 We compare the performance of iMBA-pgls with that of the off-the-shelf software MOSEK \cite{MOSEK}, which is recognized as the leader of the commercial software for nonlinear convex optimization. Since applying MOSEK \cite{MOSEK} to solve the QCQP reformulation of this class of problems returns bad solutions, we compare the performance of iMBA-pgls with that of MOSEK \cite{MOSEK} for solving its quadratic conic reformulation. We adopt the default setting of MOSEK for our tests. 

Table \ref{convex-QCQPs} reports the results of iMBA-pgls and MOSEK with $\epsilon=10^{-5}$ and $\epsilon_1=10^{-7}$, where the ``compl'' column lists the violation of the complemetarity condition, i.e., the value of $[-\langle\lambda^k,g(x^k)\rangle]_{+}$. We see that iMBA-pgls has a different performance for the instances with $\omega_0=10^4$ and $\omega_0=10$, and for those instances with $\omega_0=10^4$, it requires less iterations but needs more running time when $n$ is fixed to be $100$. When comparing with MOSEK, iMBA-pgls produces better objective values for all instances with $\omega_0=10^4$ and \textbf{7} instances with $\omega_0=10$. When $m$ is fixed to be $100$, for the instances with $\omega_0=10^4$, as the dimension $n$ increases, the running time of iMBA-pgls increases slowly and $120$s is enough for $n=2000$, whereas MOSEK meets out of memory once $n>1000$. When $n$ is fixed to be $100$, the running time of iMBA-pgls is more than that of MOSEK for $m\le 1000$ but much less than that of MOSEK for $m\ge 2000$; while for those instances with $\omega_0=10$, its running time is much less than that of MOSEK when $m\ge 1000$. Since MOSEK solves the quadratic conic reformulation of \eqref{prob}, its running time increases rapidly as $m$ or $n$ becomes larger. Thus, we conclude that iMBA-pgls is superior to MOSEK for this class of convex QCQPs with a larger $m$ or $n$ in terms of the accuracy of objective values and the running time.  
\begin{table}[h]
 \centering
 \setlength{\tabcolsep}{6.0pt}
 \setlength{\belowcaptionskip}{2pt}
 \caption{Numerical comparisons with MOSEK on $\ell_1$-norm regularized convex QCQPs}
 \label{convex-QCQPs}
 \begin{tabular}{@{\extracolsep{\fill}}cccccccccc@{\extracolsep{\fill}}}
  \toprule
  \multicolumn{1}{c}{}&\multicolumn{1}{c}{}& \multicolumn{1}{c}{}  & \multicolumn{4}{c}{iMBA-pgls} & \multicolumn{3}{c}{MOSEK} \\
 \midrule
 $\omega_0$&$n$ & $m$ &  iter & Fval &  time(s) & compl &  iter & Fval & time(s) \\
 \midrule
\multirow{10}{*}{$10^4$} &100 &\multirow{5}{*}{100} & 802 &	 {\bf-1.4229e+5} & 8.19  &  1.1978e-5
 & 13  &  -1.4221e+5 &  1.86  \\ 
 &200 &\multirow{5}{*}{}& 549  & {\bf-2.1466e+5} &  9.78  &  9.4857e-6
 &15  &  -2.1404e+5 &  5.15\\
 &500 &\multirow{5}{*}{}& 396 &  {\bf-3.3824e+5} &  10.40  &  3.7922e-6
 &15  &  -3.3146e+5 &  34.01\\
 &1000 &\multirow{5}{*}{}& 322 &  {\bf -4.6975e+5} &  17.08  &  1.0804e-4
 & 13  &  -4.5215e+5 &  160.20 \\ 
 &2000 &\multirow{5}{*}{} & 431 &  {\bf -6.5727e+5} & 112.40  &  1.3539e-6
 & -- & -- & --\\ 	
 \cmidrule{2-10}
 &\multirow{5}{*}{100} & 200 & 482  &  {\bf-1.3288e+5} & 10.94  &  9.4283e-6
 & 14  &  -1.3284e+5 &	  5.26\\ 
 &\multirow{5}{*}{} &500 & 451 &	{\bf -1.2583e+5} &	 21.89 &  7.3402e-6
 &15  &  -1.2579e+5 &	  21.54\\
 &\multirow{5}{*}{} &1000 & 805 & {\bf-1.2414e+5} &	  120.37  &  8.2490e-6
 &17  &  -1.2409e+5 &	  77.07\\
 &\multirow{5}{*}{} &2000 & 618 &  {\bf-1.2116e+5} & 76.97  	& 5.6348e-5
 &19  & -1.2113e+5 &  290.08\\ 
 &\multirow{5}{*}{} &3000 & 819 &	{\bf-1.2059e+5} &  215.46  &  7.6449e-6
 &19  &  -1.2055e+5 &  638.81\\
 \midrule
 \multirow{10}{*}{$10$} &100 &\multirow{5}{*}{100} 
 & 1021& {\bf -1.0124e+2}&  	  2.77  	&  1.9749e-8 
 & 14  &  -1.0012e+2 	&  2.22\\
 & 200 &\multirow{5}{*}{} & 2775  & {\bf-1.5458e+2} &  12.55 &	  9.7504e-8
 & 14  &  -1.5398e+2 &  6.28\\
 & 500 &\multirow{5}{*}{} & 7910 & -2.6156e+2 &  104.87 &  9.8635e-8
 & 15  &  -2.6156e+2 &  35.56\\
 & 1000 &\multirow{5}{*}{} & 10000  &  -3.8260e+2 & 476.22  &  1.8662e-5
 & 14  &  -3.9238e+2 &  158.05\\
 & 2000 &\multirow{5}{*}{} & 500  &  -2.5967e+2 & 83.22  &  0
 & -- & -- & --\\
 \cmidrule{2-10}
 &\multirow{5}{*}{100} & 200 & 1402  & {\bf-8.9678e+1} &  5.82  &  9.6931e-8
 & 17  &  -8.8272e+1 &  4.78\\
 &\multirow{5}{*}{} & 500 & 1472  &  {\bf-8.0971e+1} &  10.84  & 9.8957e-8
 & 18  &  -7.9667e+1 &  21.97\\
 &\multirow{5}{*}{} & 1000 & 1560  &  {\bf-7.6383e+1} &  18.18  &  8.5374e-8
 & 17  &  -7.4931e+1 &  76.99\\
 &\multirow{5}{*}{} & 2000 & 2028  &  {\bf-6.9809e+1} &  43.48  &  8.5874e-8
 & 16 &  -6.8305e+1 &	  290.09\\
 &\multirow{5}{*}{} & 3000 & 1856  & {\bf -6.9285e+1} 	&  58.67  &	  9.0017e-8
 & 17  &  -6.7784e+1 &  638.87\\
 \botrule
\end{tabular}
{\it\small where ``$-$'' signifies out of memory.}
\end{table}
\subsubsection{$\ell_1\!-\!\ell_2$ regularized QCQPs}\label{sec6.3.2}

We test the performance of iMBA-pgls for solving $\ell_1-\ell_2$ regularized QCQPs, which have the same form as the problem in Section \ref{sec6.3.1} except that $\psi(\cdot)=0.01\|\cdot\|$ and every component of $g$ is given by \eqref{mapg} with $P_i=10^{5}I$ for $i\in[m]$. The DC regularizer aims at testing the efficiency of iMBA-pgls rather than seeking a sparse solution. For each $k,j\in\mathbb{N}$, we choose the same linear mapping $\mathcal{A}_{k,j}$ as in Section \ref{sec6.3.1}. 
\begin{algorithm}
\renewcommand{\thealgorithm}{2}
 \caption{\label{DCA}{\bf (DCA for solving problem \eqref{prob})}}
 \begin{algorithmic}[1]
 \State{\textbf{Input:}\ $x^0\in\Gamma$.}
 \For{$k=0,1,2,\ldots$}
 
 \State{Choose $\xi^k\in\partial\psi(x^k)$;}
 
 \State{Seek an optimal solution $x^{k+1}$ of the convex quadratic program
       \begin{align*}
        &\min_{x\in\mathbb{R}^n} f(x)+\phi(x)-\langle\xi^k,x\rangle-\psi(x^k)\\
        &\ \ {\rm s.t.}\ x^{\top}Q_ix-2\langle P_ix^k,x-x^k\rangle+2b_i^{\top}x+c_i-(x^k)^{\top}P_ix^k\le 0\quad\forall i\in[m].
       \end{align*}}
\EndFor		
\end{algorithmic}
\end{algorithm}

We compare the performance of iMBA-pgls with that of Algorithm \ref{DCA} above, a DCA for solving \eqref{prob}. For Algorithm \ref{DCA}, there is no full convergence guarantee for its iterate sequence, and we use it just for numerical comparisons. We apply MOSEK to solve the quadratic conic reformulations of the subproblems of Algorithm \ref{DCA}, and now Algorithm \ref{DCA} is called DCA-MOSEK. During the tests, we adopt the default setting of MOSEK and we terminate DCA-MOSEK at the $k$th iteration whenever $\|x^k-x^{k-1}\|\le\epsilon$. For a fair comparison, the two solvers starts from the same feasible point $x^0$.

Table \ref{L12-QCQPs} reports the results of iMBA-pgls and DCA-MOSEK with $\epsilon=10^{-5}$ and $\epsilon_1=10^{-7}$. Since MOSEK solves the quadratic conic reformulations of the subproblems with an interior point solver, DCA-MOSEK starting from $x^0\in\Gamma$ always returns a feasible solution to this class of problems. We see that for this class of nonconvex QCQPs, iMBA-pgls has a similar performance to the $\ell_1$-norm regularized convex QCQPs, and requires less iterations for the test instances with $\omega_0=10^4$ but more running time when $n$ is fixed. iMBA-pgls produces better objective values than DCA-MOSEK for all test instances, and requires much less running time than the latter for $n\ge 1000$ when $m$ is fixed, and for $m\ge 1000$ when $n$ is fixed. This demonstrates the advantage of iMBA-pgls for solving this class of nonconvex QCQPs with a larger $m$ or $n$.

\begin{table}[h]
 \centering
 \setlength{\tabcolsep}{6.0pt}
 \setlength{\belowcaptionskip}{2pt}
 \caption{Numerical comparisons with DCA-MOSEK on $\ell_1\!-\!\ell_2$  regularized QCQPs}
 \label{L12-QCQPs}
 \begin{tabular}{@{\extracolsep{\fill}}cccccccccc@{\extracolsep{\fill}}}
 \toprule
  \multicolumn{1}{c}{}&\multicolumn{1}{c}{}& \multicolumn{1}{c}{}  & \multicolumn{4}{c}{iMBA-pgls} & \multicolumn{3}{c}{DCA-MOSEK} \\
 \midrule
 $\omega_0$ &$n$ & $m$ &  iter & Fval &  time(s) & compl &  iter & Fval & time(s) \\
 \midrule
 \multirow{10}{*}{$10^4$} & 100 &\multirow{5}{*}{100} 
 & 550.0 & 	  {\bf-1.5763e+5} &	   6.34  &	  1.0389e-5
 & 3.0  	&  -1.5758e+5 	  & 3.61\\ 
 & 200 &\multirow{5}{*}{} 
 & 526.0 & 	  {\bf-2.3142e+5} &	   5.61  &	  1.4801e-5 
 & 6.0  	&  -2.3055e+5 	  & 15.38\\
 & 500 & \multirow{5}{*}{} 
 & 325.0 & 	  {\bf-3.6575e+5} &	   5.63  &	  5.6332e-4 
 & 3.0  	&  -3.5813e+5 	  & 63.71\\
 & 1000 &\multirow{5}{*}{} 
 & 478.0 & 	  {\bf-4.8139e+5} &	  228.42  &	  5.0061e-6  
 & - & -&-\\
 & 2000 &\multirow{5}{*}{} 
 & 405.0  &	  {\bf-6.5331e+5} &	  106.41  &	  1.7908e-6
 & - & - & - \\ 
 \cmidrule{2-10}
 & \multirow{5}{*}{100} & 200 
 & 558.0 & 	  {\bf-1.4415e+5} &	   7.88  &	  6.2327e-6
 & 3.0  	&  -1.4410e+5 	  & 7.90\\
 & \multirow{5}{*}{} &500 
 & 1095.0 & 	  {\bf-1.3054e+5} &	  39.26  &	  9.6592e-6
 & 3.0  	 & -1.3049e+5 	  & 30.81\\
 & \multirow{5}{*}{} &1000 
 & 1130.0  &	  {\bf-1.2770e+5} &	  55.17  &	  1.2426e-5
 & 4.0  	  &-1.2765e+5 	  & 106.89\\
 & \multirow{5}{*}{} &2000 
 & 1003.0  &	  {\bf-1.2441e+5} &	  108.43  &	  7.1233e-6
 & 4.0  	  &-1.2436e+5 	  & 351.72\\ 
 & \multirow{5}{*}{} &3000 
 & 930.0  &	  {\bf-1.2340e+5} &	  218.66  &	  6.6391e-6
 & 4.0  	 & -1.2336e+5 	  & 737.58\\
 \midrule
 \multirow{10}{*}{10} & 100 &\multirow{5}{*}{100} 
 & 1275.0 & 	  {\bf -1.0960e+2} 	&   2.89  &	  9.0792e-8
 & 4.0  	 & -1.0837e+2 	&   4.40\\
 & 200 &\multirow{5}{*}{ } 
 & 3068.0 & 	  {\bf -1.6670e+2} 	&  13.94  &	  9.7689e-8
 & 4.0  	 & -1.6627e+2 	&  11.21\\
 & 500 &\multirow{5}{*}{ } 
 & 9153.0 & 	  {\bf-2.8080e+2} 	&  127.49  &	  9.7682e-8
 & 4.0  	 & -2.8078e+2 	&  80.87\\
 & 1000 &\multirow{5}{*}{ } 
 & 10000.0 &	  {\bf-3.9194e+2} 	&  491.83  	&  8.9557e-6
 & - & - & -\\
 & 2000 &\multirow{5}{*}{ } 
 & 500.0  &	  {\bf -2.5515e+2} 	& 134.51  &  0
 & - & - & -\\
 \cmidrule{2-10}
 & \multirow{5}{*}{100} & 200 
 & 1255.0 & 	  {\bf-1.0171e+2} &	   6.02  &	  7.6043e-8
 & 3.0  	 & -1.0052e+2 	  & 8.37\\
 & \multirow{5}{*}{} & 500 
 & 1272.0 & 	  {\bf-8.6065e+1} &	   8.05  &	  9.0931e-8
 & 4.0  	 & -8.4153e+1 	  & 35.14\\
 & \multirow{5}{*}{} & 1000 
 & 1130.0 & 	  {\bf-8.5009e+1} &	  13.32  &	  9.4000e-8
 & 4.0  	 & -8.3492e+1 	  & 107.11\\
 & \multirow{5}{*}{} & 2000
 & 1693.0 & 	  {\bf -7.8086e+1} &  44.49  &	  8.7478e-8
 & 4.0  	 & -7.6325e+1 	  & 356.70\\
 & \multirow{5}{*}{} & 3000 
 & 1699.0 & 	  {\bf-7.7462e+1} &  52.38  &	  5.4841e-8
 & 6.0  	 & -7.5910e+1 	  & 805.80\\
 \botrule
\end{tabular}
\end{table}
\subsubsection{$\ell_1\!-\!\ell_2$ regularized Student's $t$-regression with constraints}

This class of problem takes the form of \eqref{prob} with every component of $g$ given by \eqref{mapg}, $\phi(\cdot)\!=0.01\|\cdot\|_1,\psi(\cdot)\!=0.01\|\cdot\|$, but a highly nonlinear $f$. We consider the nonconvex loss function $f$ introduced in \cite{Aravkin12} to deal with the data contaminated by heavy-tailed Student-$t$ noise, which has the form $f(x)=\theta(Ax-b)$ with $\theta(u):=\sum_{i=1}^N\log[1\!+\!4u_i^2]$. The data $A\in\mathbb{R}^{N\times n}$ and $b\in\mathbb{R}^{N}$ are randomly generated in the same way as in \cite{Milzarek14}. Specifically, we generate a true sparse signal $x^{\rm true}$ of length $n$ with $s=\lfloor\frac{n}{40}\rfloor$ nonzero entries whose indices are chosen randomly, and then compute every nonzero entry in terms of $x^{\rm true}_i=\eta_1(i)10^{4\eta_2(i)}$, where $\eta_1(i)\in\{\pm1\}$ is a random sign and $\eta_2(i)$ is uniformly distributed in $[0,1]$. The matrix $A\in\mathbb{R}^{N\times n}$ takes $N=\lfloor{n}/{8}\rfloor$ random cosine measurements, i.e., $Ax = (\verb"dct"(x))_J$, where $\verb"dct"$ is the discrete cosine transform and $J\subset[n]$ with $|J|=N$ is an index set chosen randomly. The vector $b$ is obtained by adding Student's $t$-noise with degree of freedom $4$ and rescaled by $0.1$ to $Ax^{\rm true}$. 

Note that $\nabla^2f(x)=A^{\top}\nabla^2\theta(Ax-b)A$ for $x\in\mathbb{R}^n$. When testing this group of instances, for each $k,j\in\mathbb{N}$, we take $\mathcal{A}_{k,j}={\rm diag}([\omega^k]_{+}^{1/2})A$, where $\omega^k\in\mathbb{R}^N$ is the diagonal vector of the diagonal matrix $\nabla^2\theta(Ax^k\!-\!b)$. Since MOSEK can be employed to solve the quadratic conic reformulation of \eqref{subprobkj}, we compare the performance of iMBA-pgls with that of Algorithm \ref{iMBA} armed with MOSEK to solve its subproblems (iMBA-MOSEK). When applying MOSEK to solve the quadratic conic reformulation of \eqref{subprobkj}, we set the stop tolerances for the dual feasibility, primal feasibility and relative gap of its interior-point solver to be $10^{-5}$, and the other parameters are all set to be the default ones. For the fairness of comparison, iMBA-MOSEK starts from the same feasible point $x^0$, and stops under the same termination condition as for iMBA-pgls. 

Table \ref{table_dct} reports the results of iMBA-pgls and iMBA-MOSEK with $\epsilon=10^{-5}$ and $\epsilon_1=10^{-7}$. Observe that iMBA-MOSEK returns better objective values for more test examples. This indicates that, when $f$ is highly nonlinear, a powerful solver to the subproblems of Algorithm \ref{iMBA} will improve its performance. Unfortunately, the running time of iMBA-MOSEK is much more than that of iMBA-pgls, and the former is at least \textbf{7} times the latter when $m$ is fixed to be $100$. Thus, when $f$ is highly nonlinear, seeking a more robust and faster solver than PGls to the subproblems is still requisite.   
\begin{table}[h]
 \centering
 \setlength{\tabcolsep}{2.0pt}
 \setlength{\belowcaptionskip}{2.0pt}
 \caption{Numerical comparisons on $\ell_1\!-\!\ell_2$ regularized Student's $t$-regressions with QCs}
 \label{table_dct}
 \begin{tabular}{@{\extracolsep{\fill}}ccccccccccc@{\extracolsep{\fill}}}
  \toprule
  \multicolumn{1}{c}{}&\multicolumn{1}{c}{}& \multicolumn{1}{c}{}  & \multicolumn{4}{c}{iMBA-pgls} & \multicolumn{4}{c}{iMBA-MOSEK} \\
 \midrule
 $P_i$ & $n$ & $m$ &  iter & Fval &  time(s) & compl &  iter & Fval & time(s) & compl\\
 \midrule
 \multirow{8}{*}{$0$} & 300 &\multirow{4}{*}{50} 
 & 530  &	  3.9331e+2 	&  2.61  &	  6.8446e-8
 & 147  &	  {\bf3.9330e+2} 	&  9.78  &	  1.3974e-7\\
 \multirow{8}{*}{} & 500 &\multirow{4}{*}{}
 & 517  &      7.3941e+2 	&  5.60  &	  3.0758e-10
 & 500  &	  {\bf7.3896e+2} 	&  58.10 & 	  2.2884e-8\\
 \multirow{8}{*}{} & 800 &\multirow{4}{*}{}
 & 556  &	  {\bf1.2668e+3} 	&  11.07 & 	  4.8093e-9
 & 500  &	  1.2686e+3 	&  88.82 & 	  2.2294e-8\\
 \multirow{8}{*}{} & 1000 &\multirow{4}{*}{}
 & 669  &	  1.5210e+3 	&  18.76 & 	  2.0649e-8
 & 231  &	  {\bf1.5201e+3} 	&  59.01 & 	  2.0264e-7\\ \cmidrule{2-11}
 \multirow{8}{*}{} & 300 &\multirow{4}{*}{100} 
 & 511  &	  3.9384e+2 	&  4.09  &	  7.4481e-9
 & 507  &	  {\bf3.9357e+2} 	&  694.28 &	  6.6532e-8\\ 
 \multirow{8}{*}{} & 500 &\multirow{4}{*}{}
 & 518  &	  7.3987e+2 	&  8.58  &	  1.9922e-10
 & 500  &	  {\bf7.3940e+2} 	&  121.13 &	  1.6693e-8\\
 \multirow{8}{*}{} & 800 &\multirow{4}{*}{}
 & 737  &	  {\bf1.2676e+3} 	&  23.22  &	  6.0658e-8
 & 500  &	  1.2688e+3 	&  181.60 &	  1.2504e-9\\
 \multirow{8}{*}{} & 1000 &\multirow{4}{*}{}
 & 636  &	  1.5225e+3 	&  31.34  &	  0
 & 500  &	  {\bf1.5212e+3} 	&  1550.29 & 	  1.8061e-8\\
 \midrule
 \multirow{8}{*}{$10^{5}I$} & 300 &\multirow{4}{*}{50} 
 & 168  &	  4.6455e+2 	&  6.76  &	  4.3972e-10
 & 500  &	  {\bf4.6409e+2} 	&  30.48  &	  3.9712e-9\\
 \multirow{8}{*}{} & 500 &\multirow{4}{*}{}
 & 960  &	  {\bf6.8314e+2} 	&  9.73  &	  3.3109e-8
 & 500  &	  6.8379e+2 	&  58.64  &	  3.8367e-8\\
 \multirow{8}{*}{} & 800 &\multirow{4}{*}{}
 & 519  &	  1.0528e+3 	&  9.66  &	  1.9724e-8
 & 3000 &	  {\bf1.0522e+3} 	&  597.96 &	  9.4486e-7\\
 \multirow{8}{*}{} & 1000 &\multirow{4}{*}{}
 & 818  &	  1.3984e+3 	&  21.31  &	  9.4625e-8
 & 3000 &	  {\bf1.3977e+3} 	&  769.32 &	  3.3973e-7\\ \cmidrule{2-11}
 \multirow{8}{*}{} & 300 &\multirow{4}{*}{100} 
 & 121  &	  4.6471e+2 	&  6.29  &	  1.2036e-10
 & 500  &	  {\bf4.6420e+2} 	&  59.00 & 	  1.5984e-9	\\ 
 \multirow{8}{*}{} & 500 &\multirow{4}{*}{}
 & 822  &	  6.8489e+2 	&  12.80  &	  7.2792e-8
 & 500  &	  {\bf6.8427e+2} 	&  734.79  &  6.9279e-9	\\
 \multirow{8}{*}{} & 800 &\multirow{4}{*}{}
 & 556  &	  1.0580e+3 	&  17.42  &	  1.8459e-8
 & 530  &	  {\bf1.0575e+3} 	&  1198.25 & 	  9.8133e-8	\\
 \multirow{8}{*}{} & 1000 &\multirow{4}{*}{}
 & 547  &	  1.4043e+3 	&  26.17  &	  6.3158e-10
 & 894  &	  {\bf1.4036e+3} 	&  2689.58 & 	  9.9363e-8\\
 \botrule
\end{tabular}
\end{table}
\section{Conclusion}\label{sec6.0}

We developed an inexact MBA method with an implementable inexactness criterion for the constrained DC problem \eqref{prob}, thereby resolving the dilemma that the current MBA methods and their variants still lack of practical inexact criteria for the solving of subproblems. We conducted the systematic convergence analysis for the proposed inexact MBA method under the MSCQ and the BMP, and achieved the full convergence of the iterate sequence under the KL property of $\Phi_{\widetilde{c}}$, and the convergence rate of the iterate and objective value sequences under the KL property of $\Phi$ with exponent $q\in[1/2,1)$. As far as we know, this is the first full convergence analysis of the iterate sequence for the MBA methods without the MFCQ for the constraint system $g(x)\in\mathbb{R}_{-}^m$. Moreover, a checkable condition, weaker than the one in \cite[Theorem 3.2]{LiPong18}, was provided to identify whether $\Phi$ has the KL property of exponent $q\in[1/2,1)$. We also established the iteration complexity of $O(1/\epsilon^2)$ for seeking an $\epsilon$-KKT point under the common MFCQ, which has the same order as the existing exact MBA method. 

The inexact MBA method was implemented by applying PGls to solve the dual of subproblems, and numerical comparisons were conducted with the software MOSEK for $\ell_1$-norm regularized convex QCQPs and with the DCA-MOSEK for $\ell_1\!-\!\ell_2$ regularized QCQPs, which demonstrate the remarkable superiority of iMBA-pgls in the quality of solutions and running time for large $m$ or $n$. Numerical tests for $\ell_1\!-\!\ell_2$ regularized Student's $t$-regression with constraints showed that for a highly nonlinear $f$, a powerful solver to subproblems is still requisite, and we leave this for a future topic.

\section*{Declarations}




\begin{itemize}
\item {\bf Funding} This work was funded by the National Natural Science Foundation of China under project 12371299.
\item {\bf Competing Interests} The authors declare that they have no conflict of interest.
\item {\bf Data availability} All data for numerical tests are generated randomly. 
\end{itemize}


\begin{appendices}
\section{Number of steps for inner-loop}\label{secA}
\begin{alemma}\label{inner-step}
 Suppose that $\nabla\!f$ is strictly continuous on $\mathbb{R}^n$, and that $x^k$ is a non-stationary point of \eqref{prob}. Let
 \(
  X_k\!:=\big\{x\in\mathbb{R}^n\,|\,\frac{\mu_{\rm min}}{2}\|x-x^k\|^2+\langle\nabla\!f(x^k)+\xi^k,x-x^k\rangle+\phi(x)\le\phi(x^k)\big\}. 
 \)
 Then the inner loop stops once $j>\lceil\max\limits_{i\in[m]}\frac{\log[(\beta_{R}+L_{\nabla\!g_i}^k)/L_i^{k,0}]}{\log\tau}\rceil+\frac{\log[(L_{\nabla\!f}^k+\alpha)/\mu_{k,0}]}{\log\tau}$, where $L_{\nabla\!f}^k$ and $L_{\nabla \!g_i}^k$ for $i\in[m]$ are the Lipschitz constant of $\nabla\!f$ and $\nabla g_i$ on $X_k$. 
\end{alemma}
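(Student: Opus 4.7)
The plan is to refine the qualitative argument of Lemma \ref{lemma-welldef}(ii) into a quantitative one by (a) confining all iterates $y^{k,j}$ of the inner loop to the fixed set $X_k$, where the Lipschitz constants $L^{k}_{\nabla f}$ and $L^{k}_{\nabla g_i}$ are available, and (b) then counting how many scalings by $\tau$ of $\mu_{k,j}$ and of the components of $L^{k,j}$ are needed before the stopping condition in step 6 of Algorithm \ref{iMBA} is forced to hold.

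First I would establish that $\{y^{k,j}\}_{j\in\mathbb{N}}\cup\{x^k\}\subset X_k$. The first inequality of \eqref{inexact2} gives $F_{k,j}(y^{k,j})\le F_{k,j}(x^k)$. Expanding this via the definition of $\vartheta_{k,j}$ and using $\mathcal{Q}_{k,j}\succeq \mu_{k,j}\mathcal{I}\succeq \mu_{\min}\mathcal{I}$ yields
\[
 \tfrac{\mu_{\min}}{2}\|y^{k,j}-x^k\|^2+\langle\nabla\!f(x^k)+\xi^k,y^{k,j}-x^k\rangle+\phi(y^{k,j})\le\phi(x^k),
\]
which is exactly the defining inequality of $X_k$. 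Since $x^k\in X_k$ trivially and $X_k$ is convex, the segment joining $x^k$ and $y^{k,j}$ lies in $X_k$, so the Lipschitz constants $L^{k}_{\nabla f}$ and $L^{k}_{\nabla g_i}$ apply to every descent-lemma estimate between $x^k$ and any $y^{k,j}$.

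Next I would prove the two implications that drive the counting. For the feasibility step, combining the descent lemma for $g_i$ on $X_k$ with the definition of $G_i$ produces
$
 g_i(y^{k,j})\le G_i(y^{k,j},x^k,L^{k,j})+\tfrac{L^{k}_{\nabla g_i}-L^{k,j}_i}{2}\|y^{k,j}-x^k\|^2,
$
while \eqref{inexact1} gives $G_i(y^{k,j},x^k,L^{k,j})\le[G_i(\cdot)]_+\le\tfrac{\beta_R}{2}\|y^{k,j}-x^k\|^2$; hence whenever $L^{k,j}_i\ge\beta_R+L^{k}_{\nabla g_i}$ for every $i\in[m]$, one has $g(y^{k,j})\le 0$, so step 7 can no longer trigger. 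For the sufficient decrease, the descent lemma for $f$ on $X_k$, combined with the bound on $\langle\nabla\!f(x^k),y^{k,j}-x^k\rangle$ obtained from $F_{k,j}(y^{k,j})\le F_{k,j}(x^k)$ and from \eqref{psi-cvx}, yields
\[
 F(y^{k,j})-F(x^k)\le -\tfrac{\mu_{k,j}-L^{k}_{\nabla f}}{2}\|y^{k,j}-x^k\|^2,
\]
so whenever $\mu_{k,j}\ge L^{k}_{\nabla f}+\alpha$ the acceptance test in step 6 is satisfied.

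Finally I would count: the update rule in steps 7--8 multiplies each $L^{k,j}_i$ by $\tau$ (resp.\ $\mu_{k,j}$ by $\tau$) at every step where case (a) (resp.\ case (b)) occurs, and starts from $L^{k,0}$ and $\mu_{k,0}$. Therefore after at most $n_L:=\lceil\max_{i\in[m]}\log[(\beta_R+L^{k}_{\nabla g_i})/L^{k,0}_i]/\log\tau\rceil$ case-(a) increments one has $L^{k,j}_i\ge \beta_R+L^{k}_{\nabla g_i}$ for all $i$, and after at most $n_\mu:=\log[(L^{k}_{\nabla f}+\alpha)/\mu_{k,0}]/\log\tau$ further case-(b) increments one has $\mu_{k,j}\ge L^{k}_{\nabla f}+\alpha$. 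Since every inner iteration that does not exit uses either a case-(a) or a case-(b) increment, the loop must exit once $j>n_L+n_\mu$, which is the bound claimed. The main obstacle is the verification that $y^{k,j}$ remains in $X_k$ uniformly in $j$; once that is in hand, the two descent inequalities and the geometric-growth bookkeeping are routine.
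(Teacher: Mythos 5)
Your proposal is correct and mirrors the paper's own proof: membership $y^{k,j}\in X_k$ from the first inequality of \eqref{inexact2}, the descent lemma combined with \eqref{inexact1} to force $g(y^{k,j})\le 0$ once $L_i^{k,j}\ge\beta_R+L^k_{\nabla g_i}$, the descent lemma for $f$ together with \eqref{psi-cvx} and \eqref{inexact2} to force the sufficient decrease once $\mu_{k,j}\ge L^k_{\nabla f}+\alpha$, and then the $\tau$-scaling count (your order-independent bookkeeping of case-(a) and case-(b) increments is, if anything, a cleaner version of the paper's worst-case-ordering count). The only piece you skip is the paper's opening observation that $X_k$ is compact — obtained from an affine minorant of the convex function $\phi$, which makes the function defining $X_k$ coercive — which is what guarantees that the Lipschitz constants $L^k_{\nabla f}$ and $L^k_{\nabla g_i}$ of the strictly continuous gradients on $X_k$ are finite.
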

\begin{proof}
 Since $\phi$ is a finite convex function, there exists $\widehat{x}\in\mathbb{R}^n$ and $\widehat{\zeta}\in\partial\phi(\widehat{x})$ such that $\phi(x)\ge\phi(\widehat{x})+\langle\widehat{\zeta},x-\widehat{x}\rangle$ for all $x\in\mathbb{R}^n$. Then, $\mathbb{R}^n\ni x\mapsto\frac{\mu_{\rm min}}{2}\|x-x^k\|^2+\langle\nabla\!f(x^k)+\xi^k,x-x^k\rangle+\phi(x)$ is coercive, which implies that the set $X_k$ is compact. Along with the strict continuity of $\nabla\!f$ and Assumption \ref{ass1} (i), it follows that $\nabla\!f$ and $\nabla\!g_i$ for $i\in[m]$ are Lipschitz continuous on $X_k$. For each $j\in\mathbb{N}$, combining the expression of $R_{k,j}$ with inequality \eqref{inexact1} yields that 
 \begin{equation}\label{quant-1}
 g_i(x^k)+\langle\nabla\!g_i(x^k),y^{k,j}-x^k\rangle+\frac{L^{k,j}_i-\beta_{R}}{2}\|y^{k,j}-x^k\|^2\le0\quad\forall i\in[m].
 \end{equation}
 From the first inequality of \eqref{inexact2} and the expression of $F_{k,j}$, it is easy to check that $y^{k,j}\in X_k$ for each $j\in\mathbb{N}$. Along with the Lipschitz continuity of $\nabla\!g_i$ on $X_k$, for each $j\in\mathbb{N}$,
 \begin{align*}
 g_i(y^{k,j})&\le g_i(x^k)+\langle\nabla\!g_i(x^k),y^{k,j}-x^k\rangle+\frac{1}{2}(L^{k,j}_i-\beta_{R})\|y^{k,j}-x^k\|^2\nonumber\\
 &\quad\ +\frac{1}{2}[L_{\nabla g_i}^k-(L^{k,j}_i-\beta_{R})]\|y^{k,j}-x^k\|^2\nonumber\\
 &\stackrel{\eqref{quant-1}}{\le}\frac{1}{2}[L_{\nabla g_i}^k-(L^{k,j}_i-\beta_{R})]\|y^{k,j}-x^k\|^2\quad\forall i\in[m].
 \end{align*}
 Then, $g(y^{k,j})\le0$ if $\beta_{R}+L_{\nabla g_i}^k\le L^{k,j}_i=\tau^{j}L_i^{k,0}$ for all $i\in[m]$, so the first inequality in step 6 holds as long as $j>j_1:=\lceil\max\limits_{i\in[m]}\frac{\log(\beta_{R}+L_{\nabla\!g_i}^k)-\log L_i^{k,0}}{\log\tau}\rceil$. On this basis, by the Lipschitz continuity of $\nabla\!f$ on the set $X_k$ and inequality \eqref{psi-cvx}, for each $j\ge j_1$, it holds
 \begin{align}\label{inner-f}
  F(y^{k,j})&\le f(x^k)+\langle\nabla\!f(x^k),y^{k,j}-x^k\rangle+\frac{L_{\nabla\!f}^k}{2}\|y^{k,j}-x^k\|^2\nonumber\\
  &\quad+\phi(y^{k,j})-\psi(x^k)+\langle\xi^k,y^{k,j}-x^k\rangle\nonumber\\
  &=F_{k,j}(y^{k,j})+\frac{1}{2}L_{\nabla\!f}^k\|y^{k,j}\!-\!x^k\|^2-\frac{1}{2}\langle y^{k,j}\!-\!x^k,\mathcal{Q}_{k,j}(y^{k,j}\!-\!x^k)\rangle\nonumber\\
  &\le F(x^k)-\frac{\alpha}{2}\|y^{k,j}-x^k\|^2+\frac{L_{\nabla\!f}^k+\alpha-\tau^{j-j_1}\mu_{k,0}}{2}\|y^{k,j}-x^k\|^2,
 \end{align}
 where the second inequality is due to \eqref{inexact2} and $\mathcal{Q}_{k,j}\succeq\mu_{k,j}\mathcal{I}=\tau^{j-j_1}\mu_{k,j_1}\mathcal{I}\succeq\tau^{j-j_1}\mu_{k,0}\mathcal{I}$. Thus, the two inequalities in step 6 hold if $L_{\nabla\!f}^k\!+\!\alpha\le \tau^{j-j_1}\mu_{k,0}$ or equivalently $j>\frac{\log(L_{\nabla\!f}^k+\alpha)-\log\mu_{k,0}}{\log\tau}+j_1$. That is, the inner loop stops once $j>\frac{\log(L_{\nabla\!f}^k+\alpha)-\log\mu_{k,0}}{\log\tau}+j_1$. 
\end{proof}
\end{appendices}

\bibliography{references}

\end{document}